\documentclass[10pt]{amsart}

\usepackage{color}
\usepackage{graphicx, tikz}
\usepackage{xcolor}
\usepackage{cancel}
\usepackage{amssymb,amsfonts,amsthm,amsmath,calligra, enumerate, stmaryrd}
\usepackage{extarrows}
\usepackage[utf8]{inputenc}
\usepackage{accents}
\usepackage{slashed}
\usepackage{yfonts}
\usepackage{mathrsfs,pifont}
\usepackage{mathtools}

\usetikzlibrary{arrows.meta}

\usepackage[bookmarksnumbered=true, bookmarksopen=true]{hyperref}
\usepackage[all]{xy}

\usepackage[margin=1in]{geometry}
\usepackage{amsmath,amssymb}
\usepackage{xcolor}
\usepackage{tikz}
\usetikzlibrary{arrows.meta}

\theoremstyle{definition}

\newtheorem{Theorem}{Theorem}[section]
\newtheorem{Proposition}[Theorem]{Proposition}
\newtheorem{Lemma}[Theorem]{Lemma}
\newtheorem{Corollary}[Theorem]{Corollary}
\newtheorem{Definition}[Theorem]{Definition}

\newtheorem{Remark}[Theorem]{Remark}
\newtheorem{Example}[Theorem]{Example}

\numberwithin{equation}{section}

\def\ben{\begin{eqnarray*}}
\def\een{\end{eqnarray*}}

\newcommand{\C}{\mathbb{C}}
\newcommand{\GL}{\mathrm{GL}}

\newcommand{\Crit}{\mathrm{Crit}}

\newcommand{\Q}{\mathbb{Q}}

\newcommand{\bC}{\mathbb{C}}

\newcommand{\bZ}{\mathbb{Z}}

\newcommand{\bv}{\mathbf{v}}

\newcommand{\bd}{\mathbf{d}}

\newcommand{\bPsi}{\mathbf{\Psi}}

\newcommand{\cH}{\mathcal{H}}

\newcommand{\cM}{\mathcal{M}}

\newcommand{\fX}{\mathfrak{X}}

\newcommand{\sA}{\mathsf{A}}

\newcommand{\sG}{\mathsf{G}}

\newcommand{\sT}{\mathsf{T}}

\newcommand{\sW}{\mathsf{W}}
\newcommand{\sw}{\mathsf{w}}

\newcommand{\End}{\operatorname{End}}

\newcommand{\Frac}{\operatorname{Frac}}

\newcommand{\Hilb}{\operatorname{Hilb}}
\newcommand{\Hom}{\operatorname{Hom}}

\newcommand{\pt}{\operatorname{pt}}

\newcommand{\rk}{\operatorname{rk}}

\newcommand{\Spec}{\operatorname{Spec}}

\newcommand{\tr}{\operatorname{tr}}

\newcommand{\id}{\operatorname{id}}

\newcommand{\In}{\operatorname{in}}
\newcommand{\Out}{\operatorname{out}}

\makeatletter
\newcommand{\ostar}{\mathbin{\mathpalette\make@circled\star}}
\newcommand{\make@circled}[2]{%
  \ooalign{$\m@th#1\smallbigcirc{#1}$\cr\hidewidth$\m@th#1#2$\hidewidth\cr}%
}
\newcommand{\smallbigcirc}[1]{%
  \vcenter{\hbox{\scalebox{0.77778}{$\m@th#1\bigcirc$}}}%
}
\makeatother

\title{Injectivity of specialization maps in critical cohomology}

\author{Yalong Cao}
\address{Morningside Center of Mathematics, Institute of Mathematics \& State Key Laboratory of Mathematical Sciences, Academy of Mathematics and Systems Sciences, Chinese Academy of Sciences, Beijing, China}
\email{yalongcao@amss.ac.cn}

\subjclass[2020]{
Primary
14N35,  
17B37. 
Secondary 
16G20, 
14C17} 
\keywords{Critical cohomology, specialization maps, quivers with potentials}

\begin{document}

\begin{abstract}
Specialization maps are natural maps from equivariant vanishing cycle cohomology to the equivariant cohomology of the ambient space.
We study their injectivity problem for moduli stacks of representations of quivers with potentials, including those
associated with generalized conifolds and finite subgroups of $SO(3,\mathbb{C})$. 
Applications to the irreducibility of modules of Reshetikhin type shifted Yangians are also given.  
\end{abstract}

\maketitle

\setcounter{tocdepth}{1}
\tableofcontents




\section{Introduction}

\subsection{Background}

Critical cohomology (vanishing cycle cohomology) and its $K$-theoretic analogue are fundamental objects in the theory of perverse sheaves and singularities.  
They have been heavily used in the current research on geometric representation theory,
for example, Kontsevich-Soibelman critical cohomological Hall algebras, e.g.~\cite{KS, Dav1, DM, RSYZ, BR} and its $K$-theoretic analogue \cite{P}, geometric realization of Kirillov-Reshetikhin modules \cite{VV1, VV2, VV3}, 
geometric $R$-matrices given by critical stable envelopes \cite{COZZ1,COZZ2, COZZ3, COZZ4}, Coulomb branch algebras \cite{Nak, BFN, BT} and many others.
They also play an important role in enumerative geometry, where they serve as the state spaces in Gromov-Witten type theory of critical loci (including the theory of gauged linear sigma models)
e.g.~\cite{FJR, CFGKS, FK, AL, CZ, CTZ},
also in relative Donaldson-Thomas theory of logarithmic Calabi-Yau 4-folds, e.g. \cite{CZZ, CTZ, Liu}. 

While critical cohomology encodes rich singularity-theoretic information, this very richness makes it technically challenging to study and compute. 
For example, even constructing explicit classes in critical cohomology is generally difficult. The usual approach of taking Chern classes of vector bundles on the ambient space does not directly apply: these Chern classes naturally act on critical cohomology but do not, in general, define elements of it.

Several techniques are available for studying critical cohomology, including dimensional reduction, which under suitable assumptions, reduces its computation to that of the ordinary cohomology of an algebraic variety \cite{Dav1}. 
Another way to relate critical cohomology to ordinary cohomology is by the specialization map \cite{COZZ1}. 
\begin{Proposition}(\cite[Props 7.16--7.18, Ex.~7.19]{COZZ1})
Let $X$ be a smooth quiver variety\footnote{They are GIT quotients of vector spaces by reductive groups with no relations imposed. 
Specialization maps can be defined for more general setting, see \cite[Def.~7.14]{COZZ1}. } 
with a $\sT$-action, and let $\sw$ be a $\sT$-invariant regular function on $X$. Then there is a naturally defined map 
(called \textit{specialization map})\footnote{There is a more general specialization map relating different critical cohomology, see \cite[Def.~7.14]{COZZ1}.}:
$$\mathsf{sp}\colon H^\sT(X,\sw)\to H^\sT(X). $$
The map is functorial with respect to l.c.i.\,pullbacks, proper pushforward and compatible with canonical maps and stable envelopes.   
\end{Proposition}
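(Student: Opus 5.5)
The plan is to construct $\mathsf{sp}$ explicitly and then check each compatibility from the construction. Write $H^\sT(X,\sw)=H^\sT(X_0,\varphi_\sw \mathbb{Q}_X)$, where $X_0=\sw^{-1}(0)$; if $\sw$ has several critical values, treat each fiber separately, or use the total vanishing cycle complex supported on $\Crit(\sw)$. The canonical triangle
$$\mathbb{Q}_{X_0}[-1]\to \psi_\sw\mathbb{Q}_X[-1]\to \varphi_\sw\mathbb{Q}_X\xrightarrow{+1}$$
gives a natural map $H^\sT(X_0,\varphi_\sw)\to H^{\sT}(X_0,\mathbb{Q})$ up to shift. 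Pushing forward along $X_0\hookrightarrow X$ gives the shift $[2]$, i.e.\ the Gysin map of the divisor $X_0$. Alternatively, and better suited to quiver varieties, I would use the specialization (nearby) map $\psi_\sw\mathbb{Q}_X\to i^*\mathbb{Q}_X$ composed with the variation map $\varphi_\sw\to\psi_\sw$. In either form the composite is
$$H^\sT(X,\sw)\to H^\sT(X_0)\xrightarrow{i_*} H^\sT(X),$$
which is $\mathsf{sp}$. I expect smoothness and quiver-variety structure to enter when checking that the composite does not depend on choices. Concretely, I would use the $\mathbb{C}^*$-scaling on the fibers, or the fact that $X$ is a GIT quotient of a vector space $V$ by a reductive $G$. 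This lets me reduce, via $H^\sT(X,\sw)\cong H^{\sT\times G}(V^{ss},\sw)$, to an equivariant statement on an open subset of a vector space.

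Next comes functoriality. For proper pushforward $f\colon X\to Y$ with $\sw_X=f^*\sw_Y$, the statement follows from proper base change $f_*\varphi_{\sw_X}\cong\varphi_{\sw_Y}f_*$. The triangle above is natural under this isomorphism, and $i_*$ commutes with $f_*$. For l.c.i.\ pullbacks, I would factor the map as a smooth morphism followed by a regular embedding. Smooth pullback commutes with $\varphi$ by smooth base change. A regular embedding $Z\hookrightarrow X$ is handled by deformation to the normal cone: the Gysin pullback on critical cohomology is defined through $\varphi$ on the deformation space, and the specialization morphism commutes with $\varphi$ there, since $\sw$ extends constantly along the deformation. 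Compatibility with canonical maps, meaning $H^\sT(X,\sw)\to H^\sT(X,\sw)$ restricted to supports or the map $H^\sT_c\to H^\sT$, is formal, because all these maps are induced by morphisms of sheaves that commute with the triangle.

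The main obstacle is compatibility with stable envelopes. Critical stable envelopes are characterized by support and degree conditions on the attracting set together with a normalization on the diagonal. My approach is to apply $\mathsf{sp}$ to the critical stable envelope and verify that the image satisfies the defining axioms of the ordinary stable envelope for $(X,\sT)$ relative to the fixed locus. Two of these are immediate. Support is preserved because $\mathsf{sp}$ is supported-functorial. The diagonal normalization (restriction to $X^A$ equals $\pm e(N^-)$ times the input) follows from the l.c.i.\ pullback compatibility applied to $X^A\hookrightarrow X$, using that $\mathsf{sp}$ on $X^A$ intertwines the critical and ordinary Euler-class actions. The delicate axiom is the degree bound off the diagonal, since $\mathsf{sp}$ shifts cohomological degree. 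I would first try to show that $\mathsf{sp}$ is $H_\sT(\pt)$-linear and respects the $A$-degree filtration, by computing it on the attracting-set strata via the localization theorem. Uniqueness of stable envelopes then gives $\mathsf{sp}\circ \mathrm{Stab}^{crit}=\mathrm{Stab}\circ\mathsf{sp}$. If the degree bound fails in general, I would restrict to the quiver-variety case, where the tautological description makes the degrees explicit.
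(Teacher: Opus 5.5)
\textbf{There is a genuine gap in the definition of the map itself.} The paper does not reprove this statement; it imports it from \cite{COZZ1}. It does, however, record the property that pins the map down. By Lemma~\ref{lem on dim red} (\cite[Rmk.~7.16]{COZZ1}), under dimensional reduction $H^{\sT\times\sG}(Z(s)\times E_+)\cong H^{\sT\times\sG}(R,\sw)$ the map $\mathsf{sp}$ becomes the pushforward along $Z(s)\times E_+\hookrightarrow B\times E_+$. In particular it is degree-preserving, and often injective.

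Your construction fails this test. You take the connecting map $\varphi_\sw\mathbb{Q}_X\to\mathbb{Q}_{X_0}$ and compose it with the Gysin map of the divisor $X_0$. This composite raises cohomological degree by $2$, relative to the normalization in which $\mathsf{sp}$ is degree-preserving. It also vanishes in the simplest example. Take $X=\mathbb{C}^2$, $\sw=xy$, and $\sT=\mathbb{C}^*$ acting with weights $(1,-1)$.
\begin{itemize}
\item Here $X_0=\{xy=0\}$ is contractible, so $H_\sT(X_0)=\mathbb{Q}[t]\cdot 1$.
\item By the projection formula, your Gysin map is multiplication by $i_*1=[\{x=0\}]+[\{y=0\}]=t-t=0$.
\item This vanishing is forced: $\sT$-invariance of $\sw$ makes $\mathcal{O}(X_0)$ equivariantly trivial.
\end{itemize}
So your map is identically zero. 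The true $\mathsf{sp}$ sends the generator of $H^\sT(\mathbb{C}^2,xy)\cong H^\sT(\{0\}\times\mathbb{C}_y)$ to $[\{x=0\}]=t$, and is injective.

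Your alternative does not repair this. The natural arrow goes $i^*\to\psi_\sw$, not $\psi_\sw\to i^*$. With the correct directions, $\mathrm{var}\colon\varphi_\sw\to\psi_\sw$ and $\psi_\sw\to i^!(\cdot)[2]$ are consecutive arrows of the Verdier-dual Milnor triangle, so their composite is zero.

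\textbf{What is missing.} $\mathsf{sp}$ is not built from the Milnor triangle of the single function $\sw$. It comes from a degeneration of $\sw$ to the zero function. This is what the footnote's ``more general specialization map relating different critical cohomology'' \cite[Def.~7.14]{COZZ1} refers to, and $\mathsf{sp}$ is its case with limit function $0$.

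The quiver-variety hypothesis supplies the degeneration. The scaling action on $R$ commutes with $\sG$ and preserves stability, so it acts on $X$. Set $\sw_\lambda(x)=\sw(\lambda x)$; then $\sw_1=\sw$ and $\sw_0=0$. Over $\lambda\neq 0$ the family is trivialized by $(x,\lambda)\mapsto(\lambda x,\lambda)$. One then pulls a class back to the family over $\lambda\neq 0$, extends it across $\lambda=0$, and Gysin-restricts there, in the manner of Fulton--MacPherson specialization. In your write-up the scaling action appears only as a device for checking independence of choices, which is not where it is needed.

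\textbf{Consequence for the rest.} Your functoriality and stable-envelope arguments are all applied to the wrong map, so they do not establish the statement. With the correct map, compatibility with proper pushforward and l.c.i.\ pullback comes from base change on the total space of the family. Compatibility with stable envelopes is then most naturally deduced from that functoriality, since envelopes act by convolution with correspondences supported on attracting sets. That route is preferable to your uniqueness argument, in which the degree axiom is left open.
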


Knowing the \textit{injectivity} of specialization maps is useful from several perspectives. For example, one can study when a Chern class of 
a vector bundle admits a (unique) lift to the critical cohomology via the map. In representation theory, since specialization maps are 
compatible with stable envelopes, they are module maps of corresponding (shifted) quantum groups, injectivity can have  purely algebraic applications 
for those modules (see Theorem \ref{thm on irr intro}). 
This is also helpful in enumerative geometry, for example, one may reduce the curve counts on critical locus to the curve counts on the ambient spaces (with suitable constraints),  
another example is the following 
formula on quantum multiplication by divisors on critical cohomology, which uses injectivity of specialization maps.

\begin{Theorem}(\cite[Thm.~1.5, Thm.~1.6]{COZZ2})\label{thm quantum mult}
Given a symmetric quiver $Q$ with potential $\sW$, there is an associated Lie superalgebra $\mathfrak{g}_{Q,\sW}$.
Let $\cM(\bv,\bd)$ be a symmetrically framed quiver variety with induced function $\sw=\tr(\sW)$.

Suppose that the specialization map $\mathsf{sp}\colon H^\sT(\cM(\bv,\bd),\sw^{\mathrm{}})\to H^\sT(\cM(\bv,\bd))$ is injective, then the modified quantum multiplication by divisor $\lambda$ on $H^\sT(\cM(\bv,\bd),\sw^{\mathrm{}})[\![z]\!]$ is given by
$$c_1(\lambda)\,\widetilde{\star}\, \cdot =c_1(\lambda)\cup \cdot -\sum_{\alpha>0}\alpha(\lambda)\frac{z^\alpha}{(-1)^{|\alpha|}-z^\alpha}\mathrm{Cas}_\alpha+\cdots \,,$$
where $\mathrm{Cas}_\alpha=\sum_{s} e_{\alpha}^{(s)} e_{-\alpha}^{(s)}$ is the Casimir operator of $\mathfrak{g}_{Q,\sW}$, and dots denote a scalar operator. 

There is a similar formula for the case of anti-dominantly framed quiver varieties with induced functions. 
\end{Theorem}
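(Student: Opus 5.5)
We would derive the formula from the known quantum multiplication by divisors on the ambient symmetrically framed quiver variety, transported along the specialization map; injectivity is what lets us read off the answer on critical cohomology. Concretely, write $\mathsf{sp}\colon H^\sT(\cM(\bv,\bd),\sw)\to H^\sT(\cM(\bv,\bd))$. The critical quantum product $\widetilde{\star}$ is defined from virtual counts of quasimaps (or stable maps) to the critical locus. By the functoriality of $\mathsf{sp}$ under l.c.i.\ pullback and proper pushforward, we first show that $\mathsf{sp}$ intertwines $c_1(\lambda)\,\widetilde{\star}$ on the critical side with a suitably \emph{twisted} quantum multiplication on the ambient side. Here the twist comes from the virtual class with $\sw$-insertion, which after specialization becomes a localized (cosection-type) class on the ambient moduli. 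Since $\mathsf{sp}$ is linear over $H_\sT(\pt)[\![z]\!]$ and commutes with cup product by Chern classes, the classical term $c_1(\lambda)\cup$ matches automatically.

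Second, we compute the quantum correction on the ambient side by the standard method of Maulik--Okounkov: reduce via equivariant localization or degeneration to curve classes $\alpha$ contributing through rank-one (in $\alpha$) pieces. Each such contribution is expressed through a Steinberg-type correspondence, which is the geometric form of $e_\alpha e_{-\alpha}$. The key compatibility is that $\mathsf{sp}$ commutes with stable envelopes, hence with the geometric $R$-matrices and the root subalgebras they generate. Therefore the ambient operator $\sum_s e_\alpha^{(s)}e_{-\alpha}^{(s)}$ restricts along $\mathsf{sp}$ to $\mathrm{Cas}_\alpha$ for $\mathfrak{g}_{Q,\sW}$. The sign $(-1)^{|\alpha|}$ enters from the parity of $\alpha$ in the superalgebra, via the orientation and sign twist on the critical virtual class. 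The coefficient $\alpha(\lambda)\frac{z^\alpha}{(-1)^{|\alpha|}-z^\alpha}$ arises as the sum over multiple covers $k\alpha$ of the rational curves.

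Finally, let $D$ denote the difference between $c_1(\lambda)\widetilde{\star}$ and the proposed operator without the scalar term. We show that $\mathsf{sp}\circ D$ is a scalar multiple of $\mathsf{sp}$, by comparison with the ambient formula; injectivity of $\mathsf{sp}$ then forces $D$ itself to be scalar. The main obstacle is the second step. Identifying the ambient correction with the Casimir of $\mathfrak{g}_{Q,\sW}$, rather than of the potential-free algebra, requires showing that the root contributions are exactly images of the critical root operators. We would handle this by using the $R$-matrix characterization: quantum multiplication commutes with the Yangian action up to scalars, with leading terms fixed by the $z\to 0$ limit and the commutator with stable envelopes. For anti-dominant framings the same argument applies, with the opposite chamber of stable envelopes.
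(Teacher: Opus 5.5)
The paper does not prove this statement; it quotes it from \cite{COZZ2}, so your sketch has to stand on its own, and as written it has a genuine gap.

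Two pieces are sound. Your last step works: if $\mathsf{sp}\circ D=c\,\mathsf{sp}$ for a scalar $c$, injectivity gives $D=c$. The classical term also matches, because $\mathsf{sp}$ is linear over $H_\sT(\cM(\bv,\bd))$. Everything else, however, is deferred to an ambient computation that is not available.

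The ``twisted'' ambient quantum product of Step~1 is never defined. If the critical virtual class simply specializes to the ordinary virtual class of the ambient moduli space, as the cosection picture you invoke suggests, then the ambient operator is ordinary quantum multiplication on $\cM(\bv,\bd)$. Its divisor formula is then exactly the case $\sW=0$ of the statement you are proving (where $\mathsf{sp}=\id$), governed by the potential-free algebra $\mathfrak{g}_{Q,0}$. So Step~1 does not reduce the problem; it re-poses it.

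Step~2 proposes to compute that operator by ``the standard method of Maulik--Okounkov'', but that method runs on a holomorphic symplectic form. The reduced virtual class forces the purely quantum part to be a rigid class on a Lagrangian Steinberg-type variety, independent of the framing-torus parameters. That rigidity is what makes the localization argument and the Steinberg-correspondence description of $e_\alpha e_{-\alpha}$ work. A symmetrically framed quiver variety of a general symmetric quiver has no such structure, and you supply no substitute rigidity or degree statement. Without one, nothing constrains the coefficients of $z^\alpha$.

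The mechanism you propose for identifying those coefficients is also wrong. Divisor operators do not commute with the Yangian, even up to scalars. Already at $z=0$ the operator is $c_1(\lambda)\cup$, which does not commute with the root operators $e_{\pm\alpha}$, since these change $\bv$ through Hecke-type correspondences. Likewise $e_\alpha e_{-\alpha}$ does not commute with $e_\alpha$. The operators $c_1(\lambda)\,\widetilde{\star}$ commute with one another and sit in a Bethe-type commutative subalgebra, not in the commutant of the Yangian. The $z\to 0$ limit only returns the cup product, so it carries no information about the quantum coefficients.

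What pins the operator down in an argument of Maulik--Okounkov type is different:
\begin{enumerate}
\item[(a)] the rigidity statement above;
\item[(b)] compatibility with the coproduct defined by (critical) stable envelopes for the framing torus $\sA$ splitting $\bd=\bd_1+\bd_2$. Here the cross terms between the two factors are governed by the classical $r$-matrix $\sum_s e^{(s)}_\alpha\otimes e^{(s)}_{-\alpha}$ read off from the critical $R$-matrices;
\item[(c)] induction on the framing.
\end{enumerate}
Together these fix the operator up to scalars, and this is where $\mathrm{Cas}_\alpha$ of $\mathfrak{g}_{Q,\sW}$ enters.

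Finally, suppose you had an ambient formula. Your claim that the ambient $\sum_s e^{(s)}_\alpha e^{(s)}_{-\alpha}$ restricts along $\mathsf{sp}$ to $\mathrm{Cas}_\alpha$ of $\mathfrak{g}_{Q,\sW}$ still does not follow from compatibility of $\mathsf{sp}$ with stable envelopes. That compatibility only intertwines $R$-matrices on tensor products. The root spaces and multiplicities of $\mathfrak{g}_{Q,0}$ and $\mathfrak{g}_{Q,\sW}$ differ in general, so comparing Casimirs acting on a single module needs a separate argument.
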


The injectivity of (localized) specialization maps has been established for quivers with potentials 
associated with affine $\widehat{\mathfrak{gl}}_1$ and any symmetrizable Kac-Moody Lie algebras \cite[Lem.~A.4]{COZZ2}. 
In this paper, we extend those results to more examples, particularly 
those that are connected to moduli spaces of sheaves on (non-compact) Calabi-Yau 3-folds. 

\subsection{Main results}
The first series of examples we study is the \textit{generalized conifolds}:
$$\{xy=z^mw^n\}\subseteq \mathbb{C}^4. $$
The corresponding quivers with potentials are described in \S \ref{sect on QP gene conifold}. 
\begin{Theorem}(Theorems \ref{thm on sp inj gene stack}, \ref{thm on sp of framed gene conifolds})
Let $\fX=[R/\sG]$ be a stack of representations of $(Q,\sW)$ associated with a generalized conifold \S \ref{sect on QP gene conifold}. Then the specialization map 
$$\mathsf{sp}: H^{\sT}(\fX,\sw)\to H^{\sT}(\fX)$$
is injective. Moreover, specialization maps for moduli stacks of framed (stable) representations with the pullback function $\sw$ are also injective.
\end{Theorem}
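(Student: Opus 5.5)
The plan is to reduce injectivity of $\mathsf{sp}$ to a purity statement for the critical cohomology, and to get that purity from dimensional reduction together with a torus-fixed-point argument. The quiver with potential for $\{xy=z^mw^n\}$ has $m+n$ vertices on a cycle. Each edge carries a pair of opposite arrows $a_i,b_i$, and the vertices of one type carry loops $\phi_j$. The potential $\sW$ is a sum of terms of the form $\phi_j(a_jb_j-b_{j-1}a_{j-1})$, together with quartic terms $a_ib_ia_{i+1}b_{i+1}$-type at the vertices without loops, with appropriate signs. Every arrow pair $(a_i,b_i)$ appears in $\sW$, so we may choose a rank one torus $\sT_0\subseteq\sT$ that scales the $b_i$, and, at looped vertices, the $\phi_j$, with weights making $\sW$ invariant but not of weight zero on the relevant coordinates.

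\emph{Step 1 (cut of the potential).} First find a grading, or a linear subspace $L\subseteq R$ of arrows, such that $\sw$ is linear along the fibres of $R\to R/L$. For the looped vertices, take the loops $\phi_j$ as the linear coordinates. For the quartic terms, first apply a change of quiver: there is a standard derived equivalence or mutation to a quiver in which every vertex carries a loop, possibly at the cost of replacing $\fX$ by a stack with the same critical cohomology. If that fails, use the variant of dimensional reduction for functions that are linear in some coordinates after a $\sT_0$-equivariant homotopy. Dimensional reduction \cite{Dav1} then gives
$$H^\sT(\fX,\sw)\cong H^{\sT}_{BM}(\fX_Z)[\text{shift}],$$
where $Z\subseteq R/L$ is the zero locus of the partial derivatives $\partial\sw/\partial\phi_j$. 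The specialization map becomes the pushforward $H^{\sT}_{BM}(Z/\sG)\to H^\sT(R/L\,/\sG)\cong H^\sT(\fX)$, composed with the Thom/Euler isomorphism for $L$. This identification should come from the functoriality of $\mathsf{sp}$ under l.c.i.\ pullbacks and proper pushforward stated in the cited Proposition.

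\emph{Step 2 (injectivity of the pushforward).} To show $H^{\sT}_{BM}(Z/\sG)\to H^\sT(\fX)$ is injective, I would prove that $H^{\sT}_{BM}(Z/\sG)$ is pure and torsion free over $H_\sT(\pt)$, and then use localization. I expect $Z/\sG$ to be a stack of representations of a preprojective-type algebra, a multiplicative or affine type $A$ version. Its equivariant BM homology should be spanned by classes of a $\sT$-stable stratification by affine-space bundles over quotients of vector spaces, in the manner of the nilpotent or Lusztig stratification. This would make the BM homology free, with the localization map to the $\sT$-fixed locus injective. Since $\fX$ is a vector space stack, both sides then inject into the same localized fixed-point cohomology. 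Moreover $\mathsf{sp}$ commutes with restriction to fixed points, where $\sw$ restricts to a function whose critical locus is the fixed component. Hence after localization $\mathsf{sp}$ becomes an isomorphism, up to Euler classes, and injectivity before localization follows from torsion freeness. Establishing this purity and freeness is the main obstacle. I would attack it by induction on dimension vectors, using the Harder–Narasimhan/torsion stratification and the known result for $\widehat{\mathfrak{gl}}_1$ and Kac–Moody cases \cite[Lem.~A.4]{COZZ2} as base inputs.

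\emph{Step 3 (framed case).} For framed stable moduli $\cM$, the open embedding $\cM\hookrightarrow[R^{\mathrm{fr}}/\sG]$ induces surjective restriction maps on both sides, by Kirwan surjectivity. On the critical side this uses a Kirwan-type surjectivity for vanishing cycles, which holds on unstable strata by the same dimensional reduction. Restriction is compatible with $\mathsf{sp}$. The framed stack is again a quiver stack with potential of the same shape, with the framing vertex having no loop, so Step 2 applies to it. To pass injectivity to the open part, I would instead use the fixed-point argument directly on $\cM$. The $\sT$-fixed points of framed stable representations are isolated or smooth and proper, so the critical cohomology is free and the localized $\mathsf{sp}$ is an isomorphism. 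This again yields injectivity.
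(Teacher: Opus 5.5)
Your outline has the right skeleton: a cut, dimensional reduction, localized injectivity, and then $H_\sT(\pt)$-torsion-freeness of the source to remove the localization. That last deduction is exactly how the paper concludes. However, both substantive inputs are missing, and the mechanism you propose for localized injectivity is false here.

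A minor point on Step 1 first. No mutation is needed, and a mutation changes the stack with no reason for $\mathsf{sp}$ to be preserved. The cut $C_+=\{Y_i\mid\sigma(i+1)=1\}\cup\{\Phi_i\mid\sigma(i)=\sigma(i+1)=-1\}$ already makes every monomial of $\sW$, the quartic ones included, linear in $C_+$, with $s(X,c_-)$ linear in $c_-$.

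The real problem is Step 2. For $[R/\sG]$ localized only in $H_\sT(\pt)$, there is no fixed locus on which $\sw$ is constant and $\mathsf{sp}$ becomes an isomorphism. Already for $m=1$, $n=0$ (three loops, $\sW=\Phi[X,Y]$, $\bv=3$), consider the simple representations $\rho_c\colon v_0\xrightarrow{X}v_1\xrightarrow{Y}v_2\xrightarrow{c\Phi}v_0$, with all other matrix entries zero. For every $c\in\C^*$ their $(\sT\times\sG)$-stabilizer surjects onto $\sT$, while $\sw(\rho_c)=-c$.

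The paper's mechanism is different, and proceeds in three steps:
\begin{enumerate}
\item The $\sG$-invariants $\tr(X_\Sigma^k)$ have $\C^*_\epsilon$-weight $k\neq 0$, so the non-nilpotent locus is $\epsilon$-torsion and one reduces to $P_{\mathrm{nil}}\times E_-$.
\item $P_{\mathrm{nil}}$ has finitely many $\sT$-stable $\sG$-orbits with even cohomology, so the excision sequences split.
\item Over an orbit $\mathcal O_a$, $s(X,-)$ has constant rank, so $Z(s)$ is the kernel bundle $K$. Then $\mathsf{sp}$ is pushforward along $K\hookrightarrow\mathcal O_a\times E_-$. This is injective because $e(N_{\mathcal O_a})=(-h)^{\mathrm{rk}}+\cdots$, since $\C^*_h$ is trivial on $P$ and has weight $-1$ on $E_-$. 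It is injective, but in general not invertible after inverting $H_\sT(\pt)$.
\end{enumerate}

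As for torsion-freeness, you leave it as the main obstacle. Your proposed seed \cite[Lem.~A.4]{COZZ2} is a localized-injectivity statement for other quivers, not a purity result. The paper obtains purity as follows. A $\sG$-equivariant change of variables identifies $\sw\boxplus\sum_i\frac{\mu_i}{2}\tr(\eta_i^2)$ with a quadratic deformation $\sw^\mu$ of the canonical cubic potential of the tripled cyclic quiver. It then combines Thom--Sebastiani, Davison's purity theorem \cite{Dav2} and cohomological integrality \cite{DM}. Purity makes the Borel spectral sequence degenerate, so $H^{\sT\times\sG}(R,\sw)$ is free over $\Q[h,\epsilon]$.

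Step 3 also fails as written. Kirwan surjectivity points the wrong way. Injectivity of $\mathsf{sp}$ on the stack, together with surjectivity of $j^*$ on both sides, says nothing about injectivity on the open stable locus.

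Your fallback claim is also false in general: that the $\sT$-fixed loci of the framed stable moduli are isolated or smooth and proper, with localized $\mathsf{sp}$ an isomorphism. In the example above, take $\bd_{\In}=1$. The fibres over the $\sT$-fixed points $[\rho_c]$ of $R/\!\!/\sG$ are $\sT$-stable and projective, hence contain $\sT$-fixed points. So the fixed locus is non-compact, and $\sw$ is non-constant on it.

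What is needed is a left inverse of restriction that commutes with specialization, namely the interpolation map $\bPsi$. It satisfies $j^*\circ\bPsi=\id$ and $\bPsi\circ\mathsf{sp}=\mathsf{sp}\circ\bPsi$ (Proposition \ref{prop on nonabel stab and sp}), so injectivity on the framed stack passes to the stable locus. For the framed stack itself nothing new is required, and Step 2 need not be rerun. Indeed, $\sw$ is pulled back along the vector-bundle projection $p\colon R(\bv,\underline{\bd})\to R(\bv)$, and $p^*$ is an isomorphism on both critical and ordinary cohomology, compatible with $\mathsf{sp}$.
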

We first prove the injectivity after localization with respect to the flavor torus and then deduce the injectivity without localization using a purity argument \cite{Dav2}. 
The corresponding injectivity claim for moduli of stable framed  representations follows from the compatibility of specialization map with 
the so-called \textit{interpolation map} (which is a generalization of nonabelian stable envelopes), see Proposition \ref{prop on nonabel stab and sp}. 
 
The second series of examples is associated with \textit{finite subgroups} $G<SO(3,\C)$, which are classified by the $ADE$ type \S \ref{sect on finite subgroup SO(3)}. 
The $A$-type fits into the symmetrizable Kac-Moody case studied in \cite[Lem.~A.4]{COZZ2}. 
Below we treat the $D$-type case, where the associated McKay quiver with cubic potential is presented in \S \ref{sect on QP of polyhedra}. 

\begin{Theorem}(Theorems \ref{prop on inj polyhedral}, \ref{thm on inj D4})
\label{thm intro SO3}
Let $\fX=[R/\sG]$ be a stack of representations of $(Q,\sW)$ associated with a finite subgroup $G<SO(3,\C)$ of $D$-type. 
Then the specialization map 
$$\mathsf{sp}: H^{\sT}(\fX,\sw) \to  H^{\sT}(\fX)$$
is injective after localization with respect to the flavor torus $\sT$-action. 
\end{Theorem}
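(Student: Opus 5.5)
We reduce to torus fixed loci by localization, with $\sT$ the flavor torus. By the localization theorem for equivariant critical cohomology, the inclusion $\iota\colon \fX^{\sT}\hookrightarrow \fX$ induces isomorphisms
\[
H^{\sT}(\fX,\sw)_{\mathrm{loc}}\cong H^{\sT}(\fX^{\sT},\sw|_{\fX^{\sT}})_{\mathrm{loc}}
\quad\text{and}\quad
H^{\sT}(\fX)_{\mathrm{loc}}\cong H^{\sT}(\fX^{\sT})_{\mathrm{loc}} .
\]
Specialization maps are compatible with these restrictions, since they are functorial under l.c.i.\ pullbacks and proper pushforwards. Injectivity of the localized $\mathsf{sp}$ on $\fX$ therefore follows from injectivity of $\mathsf{sp}$ on the fixed stack $\fX^{\sT}$ with the restricted potential.

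The first concrete task is to choose $\sT$ for the $D$-type McKay quiver with cubic potential, following the description in \S\ref{sect on QP of polyhedra}. Take $\sT$ to scale the arrows so that $\sW$ is invariant, and make it as large as possible. Then determine $\fX^{\sT}$: a $\sT$-fixed representation decomposes by weights, and only the arrows of weight zero for the induced cocharacter survive. Each component of $\fX^{\sT}$ is a representation stack of a smaller quiver $Q'$ (a product over weight spaces) with potential $\sW'$. The aim is to choose $\sT$ generic enough that every $\sW'$ falls into one of two cases. In the first case $\sW'=0$, or $\sW'$ has no surviving cubic cycles, so $\mathsf{sp}$ is the identity. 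In the second case $\sW'$ is of the form $\sum_i x_i\,\phi_i(y)$, linear in a set of fibre coordinates $x_i$. Then dimensional reduction \cite{Dav1} identifies the critical cohomology with the cohomology of the zero locus $Z=\{\phi=0\}$. Under this identification, $\mathsf{sp}$ becomes the pushforward $H^{\sT}(Z)\to H^{\sT}(\text{base})$ composed with the Thom isomorphism. It is then injective whenever $Z$ has pure cohomology generated by tautological classes, or more simply when $Z$ is itself an affine space or a vector bundle over the base. Equivalently, one checks that the fixed-locus quivers are of $A$-type or Kac--Moody type, where \cite[Lem.~A.4]{COZZ2} already gives injectivity.

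The main obstacle is the fixed-locus analysis for the $D$-type quivers. It must be verified that a torus preserving the cubic potential exists that is large enough to kill all but "linear" or Kac--Moody-type potentials on every fixed component. The central nodes of valency $3$ or $4$ make triangles in $\sW$ that may survive for some weight configurations. The first attempt will be a weight assignment placing the central-node loop and its adjacent arrows in distinct weights, followed by a case analysis by dimension vector. If a uniform choice fails for $D_4$ because of its extra symmetry, it will be treated separately, as the split into Theorems \ref{prop on inj polyhedral} and \ref{thm on inj D4} suggests. In that case the plan is an explicit computation: exhibit a fibre coordinate in which $\sW'$ is linear, then apply dimensional reduction.
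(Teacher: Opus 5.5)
\textbf{There is a genuine gap.} The reduction to $\fX^{\sT}$ does not simplify the problem, and the step you defer (choosing $\sT$ generic enough that every restricted potential is zero, or linear with affine zero locus) fails.

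\emph{There is no torus to choose.} The largest torus preserving every monomial of $\sW$ is, modulo gauge, exactly \eqref{equ on flavor torus Dn}, which is one-dimensional for $n>2$, or \eqref{equ on flavor torus D4} for $D_4$.

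\emph{The cubic terms survive on fixed loci.} The $\sT$-fixed components of $[R/\sG]$ are graded representation stacks $[R^\phi/Z_\sG(\phi)]$, one for each cocharacter $\phi\colon\sT\to\sG$. Every cubic monomial of $\sW$ has total $\sT$-weight zero, so it is compatible with every grading. Hence the restricted potential is again cubic, linear in $C_+$ and quadratic in $C_-$. For example, for $D_4$ with $\bv=(1,1,1,0)$, a suitable $\phi$ gives $R^\phi=\C^3_{A,D,b}$ with $\sw'=\tr(ADb)$.

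\emph{The flavor torus disappears.} On $R^\phi$ the flavor torus acts through the central subgroup $\phi(\sT)\subset Z_\sG(\phi)$, so after untwisting it acts trivially. Localized injectivity on $\fX$ then becomes unlocalized, purely gauge-equivariant injectivity on each graded component. You have no tool for this:
\begin{itemize}
\item dimensional reduction along $C_+$ gives the pushforward from the quadratic cone $\{s(e)=0\}$ (graded preprojective-type relations), which is neither an affine space nor a bundle;
\item purity or tautological generation of its Borel--Moore homology is not established;
\item \cite[Lem.~A.4]{COZZ2} does not cover these graded quivers.
\end{itemize}
Such injectivity is not automatic. For $D_4$, $\bv=(1,1,1,0)$ with $\sT$ forgotten, the classes $[\{a=D=0\}]$ and $[\{d=A=0\}]$ are distinct in $H^{\sG}(Z)$, $Z=\{ad=AD=0\}$, yet have the same pushforward to $H^{\sG}(\C^4)$.

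\emph{The localization isomorphism you quote is false for quotient stacks.} $\fX^{\sT}$ has infinitely many components, one per conjugacy class of cocharacters. Already for $[\pt/\C^*]$ with trivial $\sT$-action, the restriction $\Q[t,u]\to\prod_{k\in\bZ}\Q[t,u]$, $u\mapsto u+kt$, is not surjective after inverting $t$.

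\textbf{The missing idea} is to dimensionally reduce in different coordinates, one $\sG$-orbit of the $C_+$-component at a time.
\begin{enumerate}
\item Kill $t$ on the open set where some $\sG$-invariant function $\tr(p_\Sigma^k)$ of nonzero $t$-weight is nonzero. This is the correct stacky substitute for localization. Excision for vanishing cycles is justified by Proposition~\ref{base change of van cycle}, because $\sw$ is homogeneous of degree $2$ in $E$. This reduces everything to $P_{\mathrm{nil}}\times E$.
\item Stratify $P_{\mathrm{nil}}$ into finitely many $\sG$-orbits of graded Jordan chains. On an orbit, $[(\mathcal O_a\times E)/(\sG\times\sT)]\cong[E/\mathrm{stab}_{\sG\times\sT}(p)]$.
\item The chain-position cocharacter gives a lift $\widetilde\sT=\{(g(t),t)\}$ of $\sT$ into this stabilizer. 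Under $\widetilde\sT$ every weight on $E$ is odd, hence nonzero.
\item Consequently the quadratic form $\sw_p$ is a pairing $\langle e_+,s_p(e_-)\rangle$ between the positive and negative weight spaces. Dimensional reduction along $e_+$ identifies the orbit specialization map with pushforward along the linear subspace $\ker(s_p)\subset E_-$.
\item That pushforward is multiplication by $e(N_p)=c\,t^r+\cdots$ with $c\neq 0$, hence injective.
\item Odd cohomology vanishes on each orbit, so the excision sequences split, and induction over the strata concludes.
\end{enumerate}
The $D_4$ case is handled by the isogeny $\C^*_s\times\C^*_t\to\sT$, under which $t$ has weight $2$ on $C_z$ and $-1$ on $C_x,C_y$.

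So the affine zero locus you hoped for does appear. It appears only fibrewise over orbits, after splitting $E$ by the weights of a torus inside the stabilizer, not globally on a fixed locus.
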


For type $E$, we show the following non-injectivity in \S \ref{app on type E}. 

\begin{Proposition}(Proposition \ref{prop on type E}, Remark \ref{rmk on type E})
Let $\fX=[R/\sG]$ be a stack of representations of $(Q,\sW)$ associated with a finite subgroup $G<SO(3,\C)$ of $E$-type such that $\dim R> 0$. 
Then the specialization map 
$$\mathsf{sp}: H^{\sT}(\fX,\sw) \to  H^{\sT}(\fX)$$
is \textit{not} injective after $\sT$-localization. 
\end{Proposition}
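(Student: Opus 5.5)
Since $\sT$-localization is an exact functor and $\mathsf{sp}$ is compatible with the relevant localization maps, I will prove non-injectivity by producing a nonzero class in $H^\sT(\fX,\sw)_{\mathrm{loc}}$ that maps to zero in $H^\sT(\fX)_{\mathrm{loc}}$. By $\sT$-localization, both sides restrict to the $\sT$-fixed locus $\fX^{\sT}=[R^{\sT}/\sG^{\sT}]$, where $\sG^{\sT}$ denotes the centralizer of the image of $\sT$ in $\sG$ (more precisely, a union over cocharacters). The restricted function $\sw|_{R^\sT}$ comes from the potential restricted to the $\sT$-invariant arrows. For the McKay quiver of an $E$-type subgroup with its cubic potential, the flavor torus is small: it is essentially the scaling torus allowed by the potential, possibly just rank one or trivial on the non-scalar part. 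My first step is to determine $\sT$ explicitly from the description in \S\ref{sect on QP of polyhedra} and show that no $\sT$-weight forces $R^\sT$ to be small. Ideally the flavor torus acts with a single weight, so $R^\sT=R$ or $R^\sT=0$ up to the Cartan part. Then, after localization, the question reduces to non-injectivity of $H_\sG(R,\sw)\otimes \Frac\to H_\sG(R)\otimes\Frac$.

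Second, I will compare the two sides via dimensions or Euler characteristics. $H^\sT(\fX)$ is a free module of rank equal to $\dim H^*(B\sG)$ over $H_\sT$, concentrated in a single graded piece per cohomological degree. The other side, $H^\sT(\fX,\sw)$, equals the hypercohomology of $\varphi_\sw$. The key observation is that the specialization map factors through restriction to the critical locus and then pushforward, so its image consists of classes pushed forward from $\Crit(\sw)$. If the critical locus is proper over $R/\!/\sG$ and has dimension strictly less than $\dim R$ as a stack, then $\mathsf{sp}$ raises degree by a fixed shift (such as $2\,\mathrm{codim}$). Comparing graded ranks of the localized modules (the generating Poincaré series) then shows that the source is "bigger" in some degree than the target can accommodate.

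Concretely, I plan to use dimensional reduction \cite{Dav1} along a linear subset of arrows on which $\sW$ is linear. For the $E$-type McKay quiver, the cubic potential has the form $\sum$ (arrow)$\cdot$(relation) after choosing a cut. This gives $H^\sT(\fX,\sw)\cong H^{\sT,BM}(\fX_{\mathrm{red}})$ with a shift, where $\fX_{\mathrm{red}}$ is the stack of representations of the quotient by the relations, a preprojective-type relation. The kernel then contains the fundamental classes of irreducible components of $\fX_{\mathrm{red}}$ whose pushforward to $R$ is zero for dimension reasons. This happens when a component has dimension exceeding the expected one, so its pushforward to the vector space $R$ lands in a degree where the pushforward is forced to vanish; alternatively, two components can have linearly dependent images. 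The condition $\dim R>0$ guarantees that a nontrivial relation, and hence such a component, exists.

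The main obstacle is exhibiting such a component concretely: it must have excess dimension relative to $\dim R-\#\text{relations}$ and must survive $\sT$-localization. For $E$-type the lack of a large flavor torus is exactly what should make this work, since no weight separates the components. I would first try the smallest dimension vector with $\dim R>0$, compute the relation variety by hand, and check directly that the number of components exceeds $\rk H_\sG(R)$ in the relevant degree. I would then propagate to general dimension vectors via a direct-sum/induction map compatible with $\mathsf{sp}$.
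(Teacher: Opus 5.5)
There is a genuine gap. The central step fails, and the mechanism that actually produces non-injectivity is missing.

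\textbf{The reduction step fails.} Your dimensional reduction needs a cut, i.e.\ a set of arrows in which $\sW$ is homogeneous linear. For the $E$-type potentials no such cut exists. Each potential contains pure cubic loop monomials $u^3$, $v^3$ (and $w^3$), and each of these involves a single loop with multiplicity three. So whether or not that loop is in the cut, the monomial is not linear in the cut variables. A partial reduction, if available at all, still leaves the cubic loop terms as a function on the reduced space. It does not give $H^{\sT,BM}(\fX_{\mathrm{red}})$ of a preprojective-type relation variety.

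\textbf{The detection mechanism does not work.} Your way of finding kernel elements cannot succeed as stated. $H^{\sT\times\sG}(R)\cong H_{\sT\times\sG}(\pt)$ (up to shift) is a polynomial ring, nonzero in every even degree above its bottom degree. So no pushforward is ``forced to vanish for dimension reasons''; the only degree-based obstruction is parity. Your rank comparison is never carried out, and total ranks cannot be compared since $H^*(B\sG)$ is infinite-dimensional. The final propagation along an induction map is also unjustified: such maps need not send nonzero kernel elements to nonzero elements.

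\textbf{The missing idea is the cubic loop terms themselves.} Since $\sT$ preserves $\sW$ and $\sW$ contains $u^3$, the torus $\sT$ must act trivially on every loop. Now take a loop vertex $i$ with $\bv_i\neq 0$; this exists precisely because $\dim R>0$.
\begin{itemize}
\item The line $l\colon\mathbb{A}^1\to R$, $t\mapsto u=t\cdot\id_{V_i}$ (all other arrows zero), is pointwise fixed by $\sT\times\sG$, and $l^*\sw=c\,\bv_i t^3$ with $c\neq0$.
\item $\varphi_{t^3}\omega_{\mathbb{A}^1}$ is a rank-two skyscraper in odd degree, while $H^{\sT\times\sG}(\mathbb{A}^1)$ is even. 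Hence $\mathsf{sp}=0$ on the line.
\item $\mathsf{sp}$ is compatible with l.c.i.\ pullback, and $l^!\colon H^{\sT\times\sG}(R)\to H^{\sT\times\sG}(\mathbb{A}^1)$ is an isomorphism. Therefore $\mathsf{sp}\equiv 0$ on all of $H^{\sT\times\sG}(R,\sw)$, with no localization needed.
\end{itemize}

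\textbf{What remains is nonvanishing of the source after localization.} The paper handles this as follows.
\begin{itemize}
\item Let $K\subset\sG$ be a maximal torus and restrict to $F=R^K$, the diagonal entries of the loops. Then $F\cong\mathbb{A}^r$ and $\sw|_F=\sum_a c_a x_a^3$, whose critical cohomology is free of rank $2^r$.
\item By the self-intersection formula, $\iota^!\iota_*=e_{\sT\times K}(N_{F/R})$. This class is nonzero because all $K$-weights of $N_{F/R}$ are nonzero, so $\iota_*$ is injective after localization.
\item Hence $H^{\sT\times K}(R,\sw)$, and therefore $H^{\sT\times\sG}(R,\sw)$, is nonzero after localization.
\end{itemize}
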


Besides the immediate application to Theorem \ref{thm quantum mult}, the above injectivity results can also be used to deduce the irreducibility of shifted Yangian modules. 
Recall \cite[\S 4]{COZZ2}, given a symmetric quiver $Q$ with a $\sT$-action, and a $\sT$-invariant potential $\sW$, 
satisfying the hypotheses of \textit{loc.\,cit.}, fix $\mu\in \bZ_{\leqslant 0}^{Q_0}$, 
there is a \textit{Reshetikhin type} $\mu$-\textit{shifted Yangian} $\mathsf Y_\mu(Q,\sW)$ 
constructed by geometric $R$-matrices defined by critical stable envelopes.
The Yangian acts on equivariant critical cohomology of 
$\mu=\bd_{\Out}-\bd_{\In}$ antidominantly framed quiver varieties $\cM(\underline{\bd})=\bigcup_{\bv}\cM(\bv,\underline{\bd})$ with extended functions $\sw^{\mathrm{fr}}$, 
where $\underline{\bd}=(\bd_{\In},\bd_{\Out})$.

\begin{Theorem}(Theorem \ref{thm on irr})\label{thm on irr intro}
Fix $\bd_{\In}=-\mu\in \bZ_{\geqslant 0}^{Q_0}$, $\bd_{\Out}=0$ and take $\sw^{\mathrm{fr}}=\sw$ to be the pullback function in either the case of generalized conifolds 
or the polyhedral groups of $D$-type. 

Then $\cH^{\sW^{\mathrm{}}}_{\underline{\bd},\sA^{\mathrm{fr}}}:=\bigoplus_{\bv} H^{\sT\times \sA^{\mathrm{fr}}}(\cM(\bv,\underline{\bd}),\sw^{\mathrm{}})$ is irreducible as a $\mathsf Y_\mu(Q,\sW)$-module over base field $\Frac H_{\sT\times \sA^{\mathrm{fr}}}(\pt)$, where $\sA^{\mathrm{fr}}$ is the framing torus.
\end{Theorem}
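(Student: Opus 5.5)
The plan is to reduce irreducibility to the injectivity of specialization maps established above (Theorems \ref{thm on sp inj gene stack}, \ref{thm on sp of framed gene conifolds} for generalized conifolds, and Theorems \ref{prop on inj polyhedral}, \ref{thm on inj D4} for $D$-type, after localization). First, using the compatibility of $\mathsf{sp}$ with critical stable envelopes (the Proposition quoted from \cite{COZZ1}), I will show that the localized specialization map
$$\mathsf{sp}\colon \cH^{\sW}_{\underline{\bd},\sA^{\mathrm{fr}}}\otimes \Frac H_{\sT\times\sA^{\mathrm{fr}}}(\pt)\to \bigoplus_{\bv} H^{\sT\times\sA^{\mathrm{fr}}}(\cM(\bv,\underline{\bd}))\otimes \Frac H_{\sT\times\sA^{\mathrm{fr}}}(\pt)$$
intertwines the geometric $R$-matrices on both sides. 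Hence it is a homomorphism of $\mathsf Y_\mu(Q,\sW)$-modules, where the target is viewed through the RTT generators built from the same stable envelopes. By the injectivity results, it is an embedding. For $\bd_{\Out}=0$ and $\sw^{\mathrm{fr}}=\sw$ pulled back, the framed injectivity follows from the stack case via the interpolation map (Proposition \ref{prop on nonabel stab and sp}). In the $D$-type case I would run the same argument with $\sT$-localized coefficients.

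Second, I will use $\sA^{\mathrm{fr}}$-localization. Since $\bd_{\In}=-\mu$, write the framing as a sum of one-dimensional framings at vertices. The $\sA^{\mathrm{fr}}$-fixed locus of $\cM(\bv,\underline{\bd})$ is then a product of moduli with elementary framings. Via stable envelopes, after localization, $\cH^{\sW}_{\underline{\bd},\sA^{\mathrm{fr}}}$ becomes a tensor product of ``fundamental'' modules $\cH^{\sW}_{\delta_i}(a)$ with generic evaluation parameters $a\in\Frac H_{\sA^{\mathrm{fr}}}(\pt)$.

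To prove irreducibility I will argue as follows. There is a vacuum vector $|0\rangle$, the class $1$ on $\cM(0,\underline{\bd})=\pt$. I will show that it is cyclic, and that every nonzero submodule contains it.
\begin{enumerate}
\item[(a)] \emph{Cyclicity.} The raising operators, i.e.\ the Cartan and creation parts of the RTT generators, act on $\sA^{\mathrm{fr}}$-fixed point classes. Their images under $\mathsf{sp}$ are computed by ordinary Chern-class operators (tautological classes) on the ambient quiver varieties. By injectivity, it suffices to check generation after applying $\mathsf{sp}$. There, the Cartan currents act by multiplication by tautological classes and separate the weight spaces and fixed components, since their eigenvalues are distinct over the fraction field.
\item[(b)] \emph{Cocyclicity.} The lowering operators send any nonzero vector eventually to a nonzero multiple of $|0\rangle$. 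I would check this via the nondegenerate pairing (proper pushforward to the point), which is compatible with $\mathsf{sp}$, combined with the symmetry of the $R$-matrix.
\end{enumerate}

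The main obstacle is (a)/(b) in the critical setting. Cartan eigenvalue separation is transparent on $H^{\sT}(\cM)$, but a submodule of the critical cohomology is only seen through its image under $\mathsf{sp}$. One must ensure that the eigenprojectors, which are polynomials in Cartan generators over the fraction field, preserve the submodule; they do, since they lie in the algebra. One must also ensure that a nonzero eigenvector in critical cohomology maps to a nonzero vector, which is exactly injectivity. I would first try to realize the Cartan generators as cup products by tautological Chern classes commuting with $\mathsf{sp}$, following \cite[\S4]{COZZ2}. This reduces the separation argument entirely to the ambient ordinary cohomology, where it is standard.
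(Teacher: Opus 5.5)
Your proposal has a genuine gap: it supplies only one of the two inputs the argument needs. The paper proves the theorem by quoting the criterion of \cite{COZZ2} (Proposition \ref{prop on irr criterion}). That criterion has two hypotheses. The first is localized injectivity of $\mathsf{sp}$ on the framed moduli, which you correctly obtain from the stack-level results via Proposition \ref{prop on nonabel stab and sp}. The second is finiteness of $\bigl(\Crit_{R(\bv,\mathbf 0)}(\sw)/\!\!/\GL(\bv)\bigr)^{\sT}$ for every $\bv$.

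Half of the paper's proof goes into this second hypothesis. It identifies $\Crit_{R(\bv,\mathbf 0)}(\sw)$ with $\operatorname{Rep}_\bv(\operatorname{Jac}(Q,\sW))$. It then shows that the Jacobi algebra $A$ is finite over a $\sT$-stable central subalgebra $Z$ with $(\Spec Z)^{\sT}$ finite, and applies Lemma \ref{lem on finite number of affine torus fixed}. For generalized conifolds this uses $A\cong\End_Y(\mathcal E)$ over $Z=\C[x,y,z,w]/(xy-z^mw^n)$, where all four generators have nonzero $\sT$-weight. For $D$-type it uses $A\cong e(\C[u,v,w]\#G)e$ over $Z=\C[u,v,w]^G$, with $\C[u,v,w]^{\sT}$-fixed locus the origin. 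Your plan never mentions this condition. It is a real constraint on $(Q,\sW,\sT)$, not a formal consequence of injectivity, so as written your argument would prove irreducibility in situations where it can fail.

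Your sketch uses this input silently in step (b). $\cM(\bv,\underline{\bd})$ is not proper, so there is no ``proper pushforward to the point'' on $H^{\sT\times\sA^{\mathrm{fr}}}(\cM(\bv,\underline{\bd}),\sw)$. A localized pairing can only be defined if the torus-fixed part of the critical locus is proper. Because $\bd_{\Out}=0$, the map $\cM(\bv,\underline{\bd})\to R(\bv)/\!\!/\GL(\bv)$ is projective, so that properness is exactly what the finiteness statement provides.

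Step (a) also fails as stated. Injectivity of $\mathsf{sp}$ lets you detect that a vector is nonzero, but it does not reduce a generation statement to the ambient side. To conclude $\mathsf Y_\mu(Q,\sW)\cdot|0\rangle=\cH$ from a computation after $\mathsf{sp}$, you would need to know the image of $\mathsf{sp}$ exactly. That image is in general a proper subspace; compare the ideal $(pq,ps,rq,rs)$ in Appendix \ref{sect on im of sp map}, which stays proper after inverting the flavor parameter.

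The repair is to drop the hand-rolled cyclicity/cocyclicity argument, invoke the criterion of \cite{COZZ2}, and add a proof of the finiteness of torus-fixed points of the affine quotient of the critical locus.
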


In Appendix \ref{sect on appendix A}, we discuss how examples studied above are related to (untwisted) \textit{affine Lie superalgebras} of $A,B,D$ and exceptional types.  
By using similar method in proving Theorem \ref{thm intro SO3}, one can show an injectivity result for specialization maps in affine Lie superalgebras of $C$ type as well (Appendix \ref{sect on appendix B}). Note that deleting the affine vertex yields injectivity of specialization maps for quivers with potentials associated 
to the corresponding finite-dimensional \textit{simple Lie superalgebras}.
In Appendix \ref{sect on im of sp map}, we explicitly calculate the image of an injective (non-isomorphic) specialization map in an example in $D_4$ case. 

Finally we remark that the injectivity results of this paper are expected to hold in critical $K$-theory as well, 
which may be interesting to pursue in the future.

\subsection*{Acknowledgments}

The author is deeply grateful to Andrei Okounkov, Yehao Zhou and Zijun Zhou 
for their collaboration and insightful discussions, without which this work would not have been possible.

\subsection*{Statements and Declarations}
We have no conflicts of interest to disclose.

\subsection*{Declaration on the use of AI} The author used ChatGPT-6 Astra to search references, draw pictures and discuss proof strategy in this work. 
The paper is written by the author, who verified all arguments and references, and takes full responsibility for the content.

\section{Generalities on specialization maps}
 
\subsection{Specialization maps} 

Let $X=R/\!\!/ \sG$ be a smooth quiver variety, with a $\sT$-action and a $\sT$-invariant regular function $\sw$ on it, which descends from a $\sG$-invariant regular function  $\sw: R\to \C$. 
Consider the critical cohomology 
$$H^\sT (X, \sw):=H_\sT^* (X, \varphi_\sw \omega_X), $$
where $\omega_X=\mathbb{D}\mathbb{Q}_X$ is the dualizing sheaf. 
In \cite[Ex.~7.19]{COZZ1}, the authors construct a \textit{specialization map}:
$$\mathsf{sp}: H^{\sT}(X,\sw)\to H^{\sT}(X). $$
It has many remarkable properties, including functoriality with respect to proper pushforward and lci pullback, 
and compatibility with stable envelopes \cite[Props.~7.17, 7.18]{COZZ1}.

Let $\fX=[R/\sG]$ be the stack quotient with induced function $\sw: \fX\to \C$, same construction as above gives a specialization map:
$$\mathsf{sp}: H^{\sT}(\fX,\sw)\to H^{\sT}(\fX). $$

\subsection{Nonabelian stable envelopes} 

The critical cohomologies of $X$ and $\fX$ are related by the \textit{interpolation map}: 
\begin{equation}\label{equ on psi}\bPsi: H^{\sT}(X,\sw)\to H^{\sT}(\fX,\sw), \end{equation}
constructed in \cite[Def.~9.1]{COZZ1} using an idea from \cite{AO1}. 

Let $j: X\to \fX$ be the open inclusion, then 
\begin{equation}\label{psi is inj} j^*\circ \bPsi=\id:  H^{\sT}(X,\sw)\to H^{\sT}(X,\sw). \end{equation}
In particular, $\bPsi$ is an injective map. 

When $X$ is antidominantly framed quiver variety of a symmetric quiver (as in \cite[Thm.~9.20]{COZZ1}), $\bPsi$ is explicitly given by the convolution 
with the  cycle\,\footnote{See the proof of \cite[Lem.~9.21]{COZZ1}.}:
$$\overline{\Delta_{X}}\subset X\times \fX. $$
When $X$ is moreover a symmetric quiver variety, $\bPsi_H$ has a unique characterization 
by degree condition \cite[Thm.~4.18]{COZZ1}, in which case we call it the \textit{nonabelian stable envelope}. 
 
The construction of \eqref{equ on psi} uses existence of stable envelopes with respect to one dimensional torus and 
restriction to open subsets, therefore:
\begin{Proposition}\label{prop on nonabel stab and sp}
We have a commutative diagram: 
\begin{align*}
\xymatrix{
H^{\sT}(X,\sw) \ar[d]_{\bPsi} \ar[r]^-{\mathsf{sp}} & H^{\sT}(X) \ar[d]^{\bPsi} \\
H^{\sT}(\fX,\sw)\ar[r]^{\mathsf{sp}} &  H^{\sT}(\fX).
}
\end{align*}
In particular, if the lower $\mathsf{sp}$ is injective, then the upper $\mathsf{sp}$ is also injective. 
\end{Proposition}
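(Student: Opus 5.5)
The plan is to check commutativity by unwinding both vertical maps into the operations that build $\bPsi$ in \cite[Def.~9.1]{COZZ1}. Each of these operations has a counterpart in ordinary cohomology, and $\mathsf{sp}$ commutes with each of them. Recall how $\bPsi$ is built. One introduces an auxiliary one-dimensional torus $\C^*$ together with an attracting/repelling geometry that relates $X$ to $\fX$, typically via a larger $\C^*$-variety $Y$ containing $X$ and the stack as fixed or attracting data. One then takes the stable envelope for this $\C^*$, and finally restricts to an open subset, e.g. removing unstable loci so as to land on $\fX$. The ordinary $\bPsi$ on the right column is defined by exactly the same recipe with $\sw=0$, that is, with $\omega$ in place of $\varphi_\sw\omega$.

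The key steps, in order, are as follows.
\begin{enumerate}
\item Factor $\bPsi=j'^*\circ \mathrm{Stab}_{\C^*}\circ(\text{pullback/identification})$ in both the critical and the ordinary setting, with the same geometric correspondences in each case. When $X$ is an antidominantly framed symmetric quiver variety, this factorization amounts to convolution with $\overline{\Delta_X}\subset X\times\fX$.
\item Invoke compatibility of $\mathsf{sp}$ with stable envelopes \cite[Prop.~7.18]{COZZ1}, which gives $\mathsf{sp}\circ \mathrm{Stab}^{\sw}=\mathrm{Stab}\circ\mathsf{sp}$.
\item Invoke compatibility of $\mathsf{sp}$ with l.c.i.\ pullbacks \cite[Prop.~7.17]{COZZ1}. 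Open restrictions $j'^*$ are a special case, since open immersions are smooth, hence l.c.i. The same applies to any smooth pullbacks used in the identification step, including the equivariant-to-stack descent $H^{\sT}(\fX)=H^{\sT\times\sG}(R)$.
\item Compose these commuting squares to obtain the diagram.
\end{enumerate}

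For the injectivity consequence, suppose the lower $\mathsf{sp}$ is injective and $\mathsf{sp}(a)=0$ for some $a\in H^{\sT}(X,\sw)$. Commutativity gives $\mathsf{sp}(\bPsi a)=\bPsi(\mathsf{sp}\,a)=0$, so injectivity of the lower map forces $\bPsi a=0$. Then $a=j^*\bPsi a=0$ by \eqref{psi is inj}. Only injectivity of the left $\bPsi$ is needed here; the right-hand $\bPsi$ enters only through commutativity.

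The main obstacle is step (2) in the stacky and non-proper setting. Here the stable envelope for the auxiliary $\C^*$ is defined on a variety whose fixed locus is only related to $\fX$ after an open restriction, and one must check that the specialization maps on the various spaces are compatible with the identifications used. My first attempt would be to work on a fixed finite-dimensional approximation of $\fX$, say $R\times_{\sG} U$ with $U$ an open subset of a representation where $\sG$ acts freely. On such an approximation everything is a smooth variety, and \cite[Props.~7.17, 7.18]{COZZ1} apply directly. Compatibility with the limit over approximations then follows from compatibility of $\mathsf{sp}$ with the smooth pullbacks relating different approximations.
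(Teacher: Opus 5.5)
Your proposal is correct and follows the paper's own argument. The paper likewise obtains commutativity from the fact that $\bPsi$ is built from stable envelopes for a one-dimensional torus together with restrictions to open subsets, both of which commute with $\mathsf{sp}$ by \cite[Props.~7.17, 7.18]{COZZ1}, and it deduces injectivity exactly as you do from $j^*\circ\bPsi=\id$. Your discussion of finite-dimensional approximations of $\fX$ only makes explicit a step the paper leaves implicit, without changing the route.
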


\section{Generalized conifolds}

Generalized conifolds are affine singular varieties:
\begin{equation}\label{gene conifold}\{xy=z^mw^n\}\subseteq \mathbb{C}^4\,, \end{equation}
where $m,n$ are non-negative integers such that $m+n>0$. Their small crepant resolutions form an important class of toric Calabi-Yau 3-folds that do not have 
compact 4-cycles, and have been extensively studied from the perspective of enumerative geometry, e.g.\,\cite{MN, MR1, N,NY, S}, and 
geometric representation theory, e.g.\,\cite{BR, CZ, LY, RSYZ, VV1}.

\subsection{Description by quiver with potential}\label{sect on QP gene conifold}

The resolution is in general not unique and different resolutions are related by flops. One can describe them using symmetric quivers with potentials. 
Fix a map $$\sigma: \{1,2,\ldots,m+n\}\to \{1,-1\}, $$ such that the cardinality of  $\sigma^{-1}(1)$, $\sigma^{-1}(-1)$ is $m$, $n$ respectively.

Let $Q$ be a \textit{quiver} with $(m+n)$ vertices, and arrows given by: 

\begin{itemize}
\setlength{\parskip}{1ex}

\item for any $a\in \mathbb{Z}/(m+n)\mathbb{Z}$, there is an arrow $X_a: a \to a+1$ and an arrow $Y_a:  a+1 \to a$, 

\item there is exactly one loop $\Phi_a: a\to a$ at a vertex $a$ if and only if $\sigma(a)=\sigma(a+1)$,

\item there are no arrows between distinct vertices unless they are adjacent modulo $(m+n)$.


\end{itemize}
For each vertex $a\in \mathbb{Z}/(m+n)\mathbb{Z}$, we fix $\sW_a$ by 

\begin{align*}
\sW_a=
\left\{\begin{array}{rcl}-\sigma(a)\Phi_aY_aX_a+\sigma(a)\Phi_aX_{a-1}Y_{a-1}\,,   & \mathrm{if} \,\,\sigma(a)=\sigma(a+1), \\ 
&  \\
\sigma(a)Y_aX_aX_{a-1}Y_{a-1}\,,\quad\quad \quad\quad &   \mathrm{if} \,\,\sigma(a)=-\sigma(a+1). 
\end{array} \right. 
\end{align*}
Here we read the composition of paths from right to left. 

Then the \textit{potential} of $Q$ we take is the sum: 
$$\sW=\sum_{a\in Q_0}\sW_a.$$
It is elementary to prove the following. 
\begin{Lemma}\label{lem on cut of arrow}
Let 
$$C_+:=\{Y_i\,|\,\sigma(i+1)=1 \}\cup \{\Phi_i\,|\,\sigma(i)=\sigma(i+1)=-1 \}, $$ 
$$C_-:=\{Y_i\,|\,\sigma(i+1)=-1 \}\cup \{\Phi_i\,|\,\sigma(i)=\sigma(i+1)=1 \}. $$
Then every monomial of $\sW$ contains exactly one arrow from $C_+$ and exactly one arrow from $C_-$. 
\end{Lemma}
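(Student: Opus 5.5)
The plan is a direct case check, vertex by vertex. Since $\sW=\sum_a \sW_a$, every monomial of $\sW$ is one of the (at most two) monomials of some $\sW_a$. So it suffices to fix $a\in \mathbb{Z}/(m+n)\mathbb{Z}$ and check each monomial of $\sW_a$ against the definitions of $C_\pm$. First I record which arrows can lie in $C_+\cup C_-$ at all. The $X$'s never do. $Y_i$ lies in $C_+$ or $C_-$ according to the sign $\sigma(i+1)$, so each $Y_i$ lies in exactly one of them. A loop $\Phi_i$ exists only when $\sigma(i)=\sigma(i+1)$, and then it lies in $C_-$ if this common sign is $1$ and in $C_+$ if it is $-1$. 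Thus, in a monomial, only the $Y$'s and $\Phi$'s contribute, and each contributes to exactly one side.

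\emph{Case $\sigma(a)=\sigma(a+1)=:s$.} The monomials are $\Phi_aY_aX_a$ and $\Phi_aX_{a-1}Y_{a-1}$.
\begin{itemize}
\item In $\Phi_aY_aX_a$, the loop $\Phi_a$ lies in $C_{-s}$, using the sign convention above: $s=1$ gives $C_-$. The arrow $Y_a$ lies in $C_{\sigma(a+1)}=C_s$. So there is exactly one arrow from each of $C_+$ and $C_-$.
\item In $\Phi_aX_{a-1}Y_{a-1}$, again $\Phi_a\in C_{-s}$, while $Y_{a-1}\in C_{\sigma(a)}=C_s$. Again there is one arrow from each.
\end{itemize}

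\emph{Case $\sigma(a)=-\sigma(a+1)$.} The single monomial is $Y_aX_aX_{a-1}Y_{a-1}$. Here $Y_a\in C_{\sigma(a+1)}$ and $Y_{a-1}\in C_{\sigma(a)}=C_{-\sigma(a+1)}$, so one lies in $C_+$ and the other in $C_-$. The two $X$'s contribute nothing.

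The only point needing care is the degenerate situation of small $m+n$, where indices coincide modulo $m+n$ (e.g.\ $m+n=1$ or $2$). There one must check that arrows such as $Y_a$ and $Y_{a-1}$ are still counted as occurrences in the monomial, and that the bookkeeping of which loops exist is unchanged. For $m+n=1$ we have $\sigma(a)=\sigma(a+1)$ automatically, and the first case applies verbatim. I do not expect a genuine obstacle. The main thing to get right is matching the sign convention ($\Phi$ goes to $C_{-s}$ while $Y$ goes to $C_{+s}$), since that is exactly what makes the count come out to one arrow on each side.
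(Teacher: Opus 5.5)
Your case check is correct and is essentially the verification the paper has in mind: the paper gives no proof beyond calling the lemma ``elementary.'' The key bookkeeping is that the $X$'s lie in neither set, $Y_i\in C_{\sigma(i+1)}$, and an existing loop $\Phi_i$ lies in $C_{-\sigma(i)}$. With this, each monomial of each $\sW_a$ contains exactly one arrow from $C_+$ and one from $C_-$, and this also covers the degenerate cases $m+n\leqslant 2$.
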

\begin{Example}
Let $m=2$ and $n=3$, with
\[
  \sigma(1)=\sigma(2)=1,
  \qquad
  \sigma(3)=\sigma(4)=\sigma(5)=-1.
\]
Then 
\[
\begin{aligned}
C_+ = \{Y_1,Y_0,\Phi_3,\Phi_4\}, \quad C_- = \{Y_2,Y_3,Y_4,\Phi_1\},
\end{aligned}
\]
and the quiver with potential is 
\begin{center}
\begin{tikzpicture}[
  scale=0.75,
  transform shape,
  >={Stealth[length=1.8mm]},
  every node/.style={font=\normalsize},
  every edge/.style={draw,->,semithick}
]
  \node (v1) at (0,1.8)    {$1$};
  \node (v2) at (2,3.1)    {$2$};
  \node (v3) at (4,1.8)    {$3$};
  \node (v4) at (3.3,-0.6) {$4$};
  \node (v5) at (0.7,-0.6) {$5\equiv0$};

  \path
    (v1) edge[bend left=14]
         node[above left] {$X_1$} (v2)
    (v2) edge[bend left=14]
         node[below right] {$Y_1$} (v1)

    (v2) edge[bend left=14]
         node[above right] {$X_2$} (v3)
    (v3) edge[bend left=14]
         node[below left] {$Y_2$} (v2)

    (v3) edge[bend left=14]
         node[right] {$X_3$} (v4)
    (v4) edge[bend left=14]
         node[left] {$Y_3$} (v3)

    (v4) edge[bend right=14]
         node[above] {$X_4$} (v5)
    (v5) edge[bend right=14]
         node[below] {$Y_4$} (v4)

    (v5) edge[bend right=14]
         node[right] {$X_0$} (v1)
    (v1) edge[bend right=14]
         node[left] {$Y_0$} (v5);

  \path
    (v1) edge[loop left]
         node[left] {$\Phi_1$} (v1)
    (v3) edge[loop right]
         node[right] {$\Phi_3$} (v3)
    (v4) edge[loop below]
         node[below] {$\Phi_4$} (v4);
\end{tikzpicture}
\end{center}

\[
\begin{aligned}
\sW={}&-{\Phi_1}Y_1X_1
     +{\Phi_1}X_0Y_0
     +Y_2{X_2}X_1Y_1 +\Phi_3Y_3{X_3}
     -\Phi_3{X_2}Y_2 +\Phi_4Y_4{X_4}
     -\Phi_4{X_3}Y_3
     -Y_0X_0{X_4}Y_4.
\end{aligned}
\]
\end{Example}

\subsection{Representations of quivers}

Fix a dimension vector $\bv=(\bv_1,\ldots,\bv_{m+n})\in \mathbb{N}^{Q_0}$ and let $V_i=\C^{\bv_i}$.

The representation space of the quiver $Q$ (without framing) is 
$$R=R(\bv):=\bigoplus_{a\in Q_1}\Hom(\bC^{\mathbf v_{t(a)}},\bC^{\mathbf v_{h(a)}})=P\oplus E_-\oplus E_+, $$
where $t(a)$, $h(a)$ denote the tail, head of an arrow $a\in Q_1$ respectively, 
$P=\bigoplus_i \Hom(V_i,V_{i+1})$ denotes  representations from arrows $X_i$, and $E_{\pm}$ denotes those from arrows in $C_{\pm}$. 

By Lemma \ref{lem on cut of arrow}, the potential may be written as 
\begin{equation}\label{equ on dim red of w}\sw=\tr(\sW)=\sw(X,c_-,c_+)=\langle c_+,s(X,c_-) \rangle, \end{equation}
where $X\in P$, $c_{\pm}\in E_{\pm}$, and 
\begin{equation}\label{equ on sect s} s=s(X,c_-): B:=P\oplus E_-\to E_+^\vee \end{equation}
is linear in $c_-$. We take the gauge group  
$$\sG:=\mathrm{GL}(\bv)=\prod_{i\in Q_0}\mathrm{GL}(\bv_i), $$ 
and two dimensional flavor torus:
\begin{equation}\label{equ on flavor torus conifold} \sT:=\C^*_{h}\times \C^*_{\epsilon}, \end{equation}
which acts on $X_i$, $Y_i$, $\Phi_i$ by $\epsilon$, $\sigma(i+1)h-\epsilon$, $-\sigma(i)h$ respectively. 
It  preserves the potential $\sW$ and $\sw:=\tr\sW$.

\subsection{Dimensional reduction}
Consider the Cartesian diagram 
\begin{align*}
\xymatrix{
Z(s)\times E_+ \ar[d]_{\pi} \ar[r]^-{i} \ar@{}[dr]|{\Box} & R \ar[d]^{\mathrm{projection}} \\
Z(s) \ar[r]^{} &  B\,
}
\end{align*}
Applying \cite[Cor.~A.9]{Dav1} to \eqref{equ on dim red of w}, we have a dimensional reduction isomorphism
$$i_*\circ \pi^*: H^{\sT\times \sG}(Z(s))\cong H^{\sT\times \sG}(R,\sw).  $$

\begin{Lemma}(\cite[Rmk.~7.16]{COZZ1})\label{lem on dim red}
We have a commutative diagram 
\begin{align*}
\xymatrix{
H^{\sT\times \sG}(Z(s)\times E_+) \ar[d]_{i_*}^{\cong} \ar[r]^-{} & H^{\sT\times \sG}(B\times E_+ ) \ar@{=}[d]  \\
H^{\sT\times \sG}(R,\sw)\ar[r]^{\mathsf{sp}} &  H^{\sT\times \sG}(R),
}
\end{align*}
where arrows without symbols denote the pushforward by the natural inclusion.  
\end{Lemma}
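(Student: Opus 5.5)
The plan is to compare the two natural maps out of the critical cohomology of $R$: the specialization map, and the composite of the dimensional reduction isomorphism with the pushforward along the inclusion $Z(s)\times E_+\hookrightarrow B\times E_+=R$. The statement says these agree once we identify $H^{\sT\times\sG}(Z(s)\times E_+)\cong H^{\sT\times \sG}(R,\sw)$ via $i_*$, where $i_*$ means the composite $H^{\sT\times\sG}(Z(s)\times E_+)\cong H^{\sT\times\sG}(R,\varphi_\sw\omega)|_{Z(s)\times E_+}\to H^{\sT\times\sG}(R,\sw)$ from \cite[Cor.~A.9]{Dav1}. Since all spaces are $\sT\times\sG$-equivariant and all maps are equivariant, I will work with equivariant Borel–Moore homology, i.e.\ cohomology with coefficients in $\omega$. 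I will argue directly from the sheaf-level definition of $\mathsf{sp}$ in \cite[Def.~7.14, Ex.~7.19]{COZZ1}.

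First I recall that $\mathsf{sp}$ is induced by a natural transformation. Concretely, $\mathsf{sp}$ comes from the canonical map $\varphi_\sw\omega_R\to \psi_\sw\omega_R$ (or an equivalent construction via the nearby fiber/specialization to the zero fiber $\sw^{-1}(0)$), followed by the specialization $H(\sw^{-1}(0),\psi_\sw\omega)\to H(\sw^{-1}(0),\omega)\to H(R)$. The key property is that this map is compatible with proper pushforward along closed embeddings, and it is the identity on classes supported where $\sw\equiv 0$ in a neighbourhood-compatible sense. The crucial observation is that $\sw=\langle c_+,s\rangle$ vanishes identically on $Z(s)\times E_+$. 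Moreover, the dimensional reduction isomorphism of \cite{Dav1} is constructed from the restriction of $\varphi_\sw$ to $\sw^{-1}(0)\supseteq Z(s)\times E_+$, using the $\C^*$-scaling on the $E_+$ fibers (weight one) to contract.

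Second, I will factor the dimensional reduction isomorphism as follows. The class in $H(Z(s)\times E_+)=H(Z(s)\times E_+,\omega)$ maps first to $H(\sw^{-1}(0),\omega)$ by pushforward. It then maps to $H(R,\varphi_\sw\omega)$ via the canonical map $\omega_{\sw^{-1}(0)}\to \varphi_\sw\omega_R$. The latter exists because $\varphi_\sw\omega_R$ is supported on $\sw^{-1}(0)$ and receives the adjunction map from the $*$-restriction, using the triangle $\omega_{\sw^{-1}(0)}\to \psi\to\varphi$ in its dual form. Davison's proof shows that on classes supported on $Z(s)\times E_+$ this composite is exactly the isomorphism. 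I then check that $\mathsf{sp}$ composed with $\omega_{\sw^{-1}(0)}\to\varphi_\sw\omega_R$ is the pushforward $H(\sw^{-1}(0))\to H(R)$. This follows because the composite $\omega_{\sw^{-1}(0)}\to\varphi\to\psi\to\omega_{\sw^{-1}(0)}$, as used in \cite[Rmk.~7.16]{COZZ1}, is the identity up to the standard identifications.

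The main obstacle is this last compatibility. The canonical maps $\omega\to\varphi$ and $\varphi\to\psi$ do not compose to the identity in general: the composite $\omega\to\psi$ is zero in the usual triangle. I would therefore use the precise form of the COZZ specialization. It deforms $\sw$ to $t\sw$, specializes $t\to 0$, and uses the family $R\times\C_t$ with function $t\sw$, whose zero fiber over $t=0$ is all of $R$. On $Z(s)\times E_+\times\C_t$ the function $t\sw$ vanishes. So the class supported there specializes trivially: its restriction to $t=0$ and to $t=1$ are both the pushforward of the same cycle, by homotopy invariance along $\C_t$. Checking that the $t=1$ side matches Davison's isomorphism is the step requiring care. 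I expect it to reduce to the fact that both constructions are natural with respect to the closed embedding $Z(s)\times E_+\times\C_t\hookrightarrow R\times\C_t$, where the vanishing cycle functor for a function identically zero on the support acts as the identity.
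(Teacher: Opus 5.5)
Your proposal has a genuine gap, and it sits exactly where the content of the lemma lies. Your first step models $\mathsf{sp}$ as $\varphi_\sw\omega_R\to\psi_\sw\omega_R$ followed by a map back to $\omega_{\sw^{-1}(0)}$. That model cannot be right. As you note yourself, the maps $\omega_{\sw^{-1}(0)}\to\varphi_\sw\omega_R\to\psi_\sw\omega_R$ are consecutive arrows of a distinguished triangle, so they compose to zero. With that model, $\mathsf{sp}\circ i_*$ would vanish identically instead of being the pushforward. Your third step therefore fails.

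The fallback in your last paragraph only restates what has to be shown. The homotopy along $\C_t$ does not by itself compare a class in $H^{\sT\times\sG}(R,\sw)$ with one in $H^{\sT\times\sG}(R)$. The sentence ``I expect it to reduce to the fact that both constructions are natural with respect to the closed embedding'' is essentially the lemma, left unproved.

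No deformation argument is needed; what is missing is one functoriality statement. Your second step is correct: Davison's isomorphism is, by construction, the critical-cohomology pushforward
\[
i_*\colon H^{\sT\times\sG}(Z(s)\times E_+)=H^{\sT\times\sG}(Z(s)\times E_+,0)\to H^{\sT\times\sG}(R,\sw)
\]
along the closed, hence proper, embedding $i$. This uses only $\sw\circ i\equiv 0$, $\varphi_0=\id$ in the paper's shift convention, and proper base change $\varphi_\sw i_*\cong i_*\varphi_{\sw\circ i}$.

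Then apply two facts from \cite{COZZ1}:
\begin{enumerate}
\item the specialization map is compatible with proper pushforward (one of the properties listed in the introduction), which gives $\mathsf{sp}_R\circ i_*=i_*\circ\mathsf{sp}_{Z(s)\times E_+}$;
\item the specialization map for the zero function is the identity, so $\mathsf{sp}_{Z(s)\times E_+}=\id$.
\end{enumerate}
Together these say the square commutes.

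This is all that \cite[Rmk.~7.16]{COZZ1} amounts to. The paper gives no independent argument; it only cites that remark. Your final sentence about naturality under the closed embedding, with the vanishing cycle functor of a zero function acting as the identity, points in this direction. It needs to be replaced by the proper-pushforward compatibility of $\mathsf{sp}$, not by a homotopy along $\C_t$.
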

Therefore, the study of the specialization map in this case reduces to the study of the pushforward map 
$$H^{\sT\times \sG}(Z(s))\to H^{\sT\times \sG}(B).$$

\subsection{Injectivity of specialization maps after localization in $\C^*_{\epsilon}$}\label{sect on inj sp for gene conifolds}

\begin{Proposition}\label{prop on inj conifold}
The pushforward map 
$$H^{\sT\times \sG}(Z(s))\to H^{\sT\times \sG}(B) $$
is injective after localization with respect to the $\C^*_{\epsilon}$-action.
\end{Proposition}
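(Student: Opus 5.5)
The plan is to use the localization theorem for the subtorus $\C^*_{\epsilon}\subset \sT\times \sG$ and to show that $Z(s)$ and $B$ have the \emph{same} $\C^*_{\epsilon}$-fixed locus. After inverting the nonzero elements of $H_{\C^*_\epsilon}(\pt)=\Q[\epsilon]$, both cohomologies are then identified with the cohomology of that common fixed locus. The pushforward becomes, up to an invertible Euler class, the identity.

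\emph{Step 1: fixed loci.} On $B=P\oplus E_-$, the torus $\C^*_{\epsilon}$ (alone, not twisted by $\sG$) acts on $X_i$ with weight $\epsilon$ and on $Y_i$ with weight $-\epsilon$. It acts trivially on the loops $\Phi_i$. Hence
$$B^{\C^*_\epsilon}=\{X=0,\ Y\text{-components of }c_-=0\}=\bigoplus_{\Phi_i\in C_-}\End(V_i)=:B_0 .$$
The subspace $B_0$ is $\sT\times\sG$-stable, since $\C^*_\epsilon$ commutes with everything. Next I claim $s|_{B_0}=0$, so that $B_0\subset Z(s)$ and therefore $Z(s)^{\C^*_\epsilon}=B_0$ as well. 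By definition, $s$ is obtained from $\sw=\langle c_+,s(X,c_-)\rangle$ by removing the unique $C_+$-arrow in each monomial (Lemma \ref{lem on cut of arrow}). Inspecting $\sW_a$, every monomial ($\Phi_aY_aX_a$, $\Phi_aX_{a-1}Y_{a-1}$, $Y_aX_aX_{a-1}Y_{a-1}$) contains at least one $X$-arrow. The $X$'s lie in $P$, not in $E_+$, so every component of $s$ contains a factor $X$ and vanishes at $X=0$.

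\emph{Step 2: localization.} Let $\iota\colon Z(s)\hookrightarrow B$, $j_Z\colon B_0\hookrightarrow Z(s)$ and $j_B=\iota\circ j_Z\colon B_0\hookrightarrow B$. By the localization theorem in $\sT\times\sG$-equivariant cohomology with respect to the subtorus $\C^*_\epsilon$, the map $j_{Z*}$ becomes an isomorphism $H^{\sT\times\sG}(B_0)_{\mathrm{loc}}\cong H^{\sT\times\sG}(Z(s))_{\mathrm{loc}}$. Here one uses that $Z(s)$ is a finite-type $\sT\times\sG$-variety, so supports of the cohomology of $Z(s)\setminus B_0$ are killed by a power of $\epsilon$. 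Thus $\iota_*$ is identified with $j_{B*}$. Since $B$ is smooth, $j_B^*j_{B*}$ is cup product with the equivariant Euler class $e(N)$ of the normal bundle $N=P\oplus(\text{$Y$-part of }E_-)$ of $B_0$ in $B$.

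\emph{Step 3: invertibility of $e(N)$ (the main point to check).} The bundle $N$ is a $\sG$-representation tensored with the characters $\epsilon$ (for $X_i$) and $\sigma(i+1)h-\epsilon$ (for $Y_i\in C_-$). Its $\sT\times\sG$-equivariant Euler class is a product of factors of the form $\epsilon+(\text{Chern roots of }\sG)$ and $-\epsilon\pm h+(\text{Chern roots})$. Restricting to $H_{\C^*_\epsilon}(\pt)$, each factor becomes $\pm\epsilon\neq0$. Hence $e(N)$ is invertible in the localization, which is the standard argument used in the localization theorem itself, since its image in $H_{\C^*_\epsilon}$ is a nonzero power of $\epsilon$. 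Therefore $j_B^*\circ j_{B*}$ is injective after localization, so $j_{B*}$, and hence $\iota_*$, is injective after localization in $\C^*_\epsilon$.

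The only delicate point I expect is the precise formulation of "localization with respect to $\C^*_\epsilon$" in the presence of the nonabelian group $\sG$. One should localize $H_{\sT\times\sG}(\pt)$-modules at the multiplicative set of elements whose image in $H_{\C^*_\epsilon}(\pt)$ is nonzero. With this convention both the localization theorem and the invertibility of $e(N)$ hold.
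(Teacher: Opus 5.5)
Your argument is valid for the localization you adopt in your last paragraph, namely inverting the set $S$ of classes with nonzero image in $H_{\C^*_\epsilon}(\pt)$. That is not the localization in the statement, however, and the difference is exactly where the content of the proposition lies. In the paper only $\epsilon$ is inverted (Lemma~\ref{lem on nilp}), and that is what Theorem~\ref{thm on sp inj gene stack} needs. Purity gives freeness of $H^{\sG\times\sT}(R,\sw)$ over $H(B\sT)=\Q[h,\epsilon]$ only, hence injectivity of $M\to M[\epsilon^{-1}]$, but it gives no control of $S$-torsion.

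With only $\epsilon$ inverted, your Step~2 fails: the cohomology of $Z(s)\setminus B_0$ is not $\epsilon$-torsion. Take a point $(X,c_-)$ with $X$ nilpotent but nonzero. It is fixed by the one-parameter subgroup $\{(t,g(t))\}\subset\C^*_\epsilon\times\sG$, where $g(t)e_k=t^{-k}e_k$ on a graded Jordan basis, and this subgroup maps isomorphically onto $\C^*_\epsilon$. So for the orbit $\mathcal O\ni X$ one has $H^{\sT\times\sG}(\mathcal O)\cong H_{\mathrm{stab}(X)}(\pt)$ up to shift, and $\epsilon$ is not nilpotent there. Inverting $\epsilon$ removes only the non-nilpotent locus, so what survives is $P_{\mathrm{nil}}\times E_-$, not $B_0$. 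The nilpotent-orbit contributions are killed instead by elements of $S$ such as $e_{\sT\times\sG}(P)$, which restricts to $0$ on $\mathrm{stab}(X)$ because $\C X\subset P$ is a trivial subrepresentation. So what you have actually proved is that $\iota_*$ becomes an isomorphism after inverting $S$. That is immediate from $Z(s)^{\C^*_\epsilon}=B^{\C^*_\epsilon}$, and your Step~3 is then redundant.

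This weaker statement does not give the proposition. Since $H_{\sT\times\sG}(\pt)[\epsilon^{-1}]$ is a domain, every $S$-torsion class in $H^{\sT\times\sG}(Z(s))[\epsilon^{-1}]$ automatically lies in $\ker \iota_*[\epsilon^{-1}]$. Given your result, the proposition is therefore \emph{equivalent} to $H^{\sT\times\sG}(Z(s))[\epsilon^{-1}]$ having no $S$-torsion. That is a statement about the nonzero nilpotent orbit strata, which are exactly what your localization discards, so the localization theorem cannot supply it.

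The missing idea is the paper's orbit stratification:
\begin{itemize}
\item $P_{\mathrm{nil}}$ has finitely many $\sT$-stable $\sG$-orbits $\mathcal O_a$.
\item On each orbit, $s$ has constant rank, so $Z(s)\cap(\mathcal O_a\times E_-)=\mathrm{Tot}(K_{\mathcal O_a})$.
\item $j_a^*j_{a*}$ is multiplication by $e(N_{\mathcal O_a})=(-h)^{\rk}+\cdots$. This is injective because the domain is a polynomial module in $h$: $\C^*_h$ acts trivially on $P$ and with weight $-1$ on $E_-$.
\item Vanishing of odd-degree cohomology on the orbits splits the excision sequences, and induction over the strata gives injectivity on $P_{\mathrm{nil}}\times E_-$.
\end{itemize}
Note that the injectivity comes from the $h$-direction, not from $\epsilon$.
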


To prove the proposition, we first consider the following nilpotent locus: 
$$P_{\mathrm{nil}}:=\left\{X=(X_i)_i\in \bigoplus_i \Hom(V_i,V_{i+1})\,\big|\,X_{\Sigma}:=\bigoplus_i X_i: \bigoplus_i V_i\to \bigoplus_i V_i\,\, \mathrm{is}\,\, \mathrm{nilpotent} \right\}, $$ 
$$B_{\mathrm{nil}}:=P_{\mathrm{nil}}\times E_-, \quad Z_{\mathrm{nil}}:=Z(s)\cap B_{\mathrm{nil}}.$$
\begin{Lemma}\label{lem on nilp}
The pushforward along inclusion induces isomorphisms:  
\begin{equation}\label{equ on nilp}H^{\sT\times \sG}(B_{\mathrm{nil}})[\epsilon^{-1}]\cong H^{\sT\times \sG}(B)[\epsilon^{-1}],  \quad 
H^{\sT\times \sG}(Z_{\mathrm{nil}})[\epsilon^{-1}]\cong H^{\sT\times \sG}(Z(s))[\epsilon^{-1}]. \end{equation}
\end{Lemma}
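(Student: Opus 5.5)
The plan is to use equivariant localization with respect to $\C^*_{\epsilon}$, after a twist by the gauge group so that a subtorus acts on all of $B$ and has fixed locus contained in the nilpotent locus. The standard tool is: for a $\C^*$-equivariant closed embedding $Y'\subseteq Y$ containing the $\C^*$-fixed locus (up to $\sG$-translates), the pushforward $H^{\sT\times\sG}(Y')\to H^{\sT\times\sG}(Y)$ becomes an isomorphism after inverting $\epsilon$. Equivalently, the cohomology of the open complement $Y\setminus Y'$ is $\epsilon$-torsion, and the long exact sequence in Borel–Moore homology of the pair then gives the claim. We apply this twice, once to $B_{\mathrm{nil}}\subseteq B$ and once to $Z_{\mathrm{nil}}\subseteq Z(s)$.

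\textbf{Step 1: the invariance and closedness inputs.} The scaling $\C^*_{\epsilon}$ acts on $X\in P$ with weight $\epsilon$. The locus $P_{\mathrm{nil}}$ is closed, $\sG$-invariant and conic, being cut out by the vanishing of all coefficients of the characteristic polynomial of $X_\Sigma$ (equivalently by the invariants $\tr$ of cyclic products). Since $s$ is $\sT$-equivariant, $Z(s)$ is $\sT\times\sG$-invariant, and $Z_{\mathrm{nil}}=Z(s)\cap(P_{\mathrm{nil}}\times E_-)$ is closed in $Z(s)$.

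\textbf{Step 2: torsion of the complement.} Let $U=B\setminus B_{\mathrm{nil}}$ and $U_Z=Z(s)\setminus Z_{\mathrm{nil}}$; we must show $H^{\sT\times\sG}(U)$ and $H^{\sT\times\sG}(U_Z)$ are $\epsilon$-torsion. Consider the $\sG$-invariant map
\[
\chi\colon B\to \C^N,\qquad (X,c_-)\mapsto \bigl(\tr(X_\Sigma^{k})\bigr)_k .
\]
Only cyclic words around the cycle $1\to2\to\cdots$ contribute, so the nonzero traces are those with $k$ divisible by $m+n$, and these have $\epsilon$-weight $k\epsilon\neq0$. Thus $\chi$ is $\sT\times\sG$-equivariant, with $\sG$ acting trivially and $\C^*_\epsilon$ acting linearly on $\C^N$ with nonzero weights only. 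Moreover $U=\chi^{-1}(\C^N\setminus 0)$, and similarly for $U_Z$ via $\chi|_{Z(s)}$.

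Cover $\C^N\setminus 0$ by the open sets $\{t_k\neq0\}$. On each such open set the $\sT\times\sG$-equivariant invertible function $t_k$ has nonzero $\epsilon$-weight. Hence $k\epsilon$, up to $h$-contributions, acts by $0$ on its equivariant cohomology: $t_k$ trivializes a line bundle whose first Chern class is $k\epsilon$ plus zero. Here I should check that $h$ does not also appear in the weight of $\tr(X_\Sigma^k)$; it does not, since $X$ has weight $\epsilon$ only. By Mayer–Vietoris, the cohomology of the union is killed by a power of $\epsilon$. Pulling back along $\chi$ gives the torsion statement for both $U$ and $U_Z$, since everything is a module over $H^{\sT\times\sG}(\C^N\setminus0)$.

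\textbf{Step 3: conclusion.} The localization sequence
\[
H^{\sT\times\sG}(B_{\mathrm{nil}})\to H^{\sT\times\sG}(B)\to H^{\sT\times\sG}(U)
\]
(in Borel–Moore, i.e.\ $\omega$-cohomology, as in the paper's conventions), together with its connecting map, becomes an isomorphism of the first arrow after inverting $\epsilon$. The same holds for $Z(s)$.

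\textbf{Main obstacle.} The main point to be careful about is Step 2: ensuring the torsion argument works for the stack-equivariant (infinite-dimensional $B\sG$) Borel–Moore theory. This needs the module structure over $H_{\sT}(\C^N\setminus0)$, or via an approximation of $B\sG$ by finite-dimensional spaces, which is compatible with Mayer–Vietoris. I would handle it by noting that every group here is a module over $H^*_{\sT}(\C^N\setminus 0)$ through $\chi$, and that this ring itself is $\epsilon$-torsion.
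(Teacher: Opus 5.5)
Your proposal is correct and follows the paper's argument. The $\sG$-invariant, $h$-invariant trace functions $\tr(X_\Sigma^k)$ have nonzero $\epsilon$-weight and trivialize an equivariant line bundle with $c_1=k\epsilon$, which makes the cohomology of the non-nilpotent open complement $\epsilon$-torsion, and the excision sequence then gives both isomorphisms. Your packaging through $\chi\colon B\to\C^N$ (torsion of $H^*_\sT(\C^N\setminus 0)$, or Mayer--Vietoris yielding a power of $\epsilon$) makes explicit the gluing over the cover $\bigcup_k\{q_k\neq 0\}$ that the paper leaves implicit, and your opening remark about a gauge-twisted subtorus is not needed.
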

\begin{proof}
For any $1\leqslant k \leqslant \sum_i \bv_i$,  we define $q_k: P\to \C$ by $X\mapsto \tr(X_{\Sigma}^k)$, then 
$$P\setminus P_{\mathrm{nil}}=\bigcup_{k=1}^{\sum_i \bv_i}\big\{q_k\neq 0 \big\}. $$
Here $g\cdot X_i=g_{i+1}X_ig_i^{-1}$, $e^\epsilon\cdot X_i=e^\epsilon X_i$, $e^h\cdot X_i=X_i$, therefore $q_k$ is $\sG$-invariant and has $\C^*_{\epsilon}$-weight $k$.

We claim that for any $(\sT\times \sG)$-scheme $U$, and any function $f: U\to \C^*$ such that 
it is invariant under $(\C^*_{h}\times \sG)$-action and equivariant under $\C^*_{\epsilon}$ with nontrivial weight $k\neq 0$, 
then 
$$\epsilon\cdot H^{\sT\times \sG}(U)=0, $$
which obviously implies that $H^{\sT\times \sG}(U)[\epsilon^{-1}]=0$. To prove the claim, consider $(\sT\times \sG)$-equivariant line bundle 
$$L_k=U\times \C_{k}\to U, \quad (e^h,e^\epsilon,g)\cdot (x,z)=((e^h,e^\epsilon,g)\cdot x,e^{k\epsilon} z). $$
Then $c_1^{\sT\times \sG}(L_k)=p^*(k\cdot \epsilon)$, where $p: U\to \Spec \C$. The function $f$ induces a non-zero 
$(\sT\times \sG)$-equivariant section  
$$s_f: U\to L_k. \quad x\mapsto (x,f(x)). $$
Therefore, $p^*(\epsilon)=0\in H^2_{\sT\times \sG}(U)$. Then for any $\alpha\in H^{\sT\times \sG}(U)$, we have 
$$\epsilon\cdot \alpha=p^*(\epsilon)\cdot \alpha=0. $$
Applying to $U=\{q_k\neq 0 \}\times E_-$ and using excision, the first equation in \eqref{equ on nilp} holds.
The second equation can be similarly deduced, and we omit the details. 
\end{proof}

\begin{proof}[Proof of Proposition \ref{prop on inj conifold}]

By Lemma \ref{lem on nilp}, we are reduce to proving the injectivity of 
$$H^{\sT\times \sG}(Z_{\mathrm{nil}})[\epsilon^{-1}]\to H^{\sT\times \sG}(B_{\mathrm{nil}})[\epsilon^{-1}]. $$
Motivated by \cite[\S 4.2]{MN}, we stratify $P_{\mathrm{nil}}$ by $\sG$-orbits. 
In this case, we have finitely many $\sG$-orbits $\mathcal{O}_1,\ldots, \mathcal{O}_r$ and by suitable relabelling, 
$P_a:=\bigcup_{i\leqslant a}\mathcal{O}_i$ is closed and we have 
$$\emptyset=P_0\subset P_1\subset \cdots \subset P_r=P_{\mathrm{nil}}, \,\,\, \mathrm{with} \,\,P_a\setminus P_{a-1}=\mathcal{O}_a. $$
Note that $\sT$ preserves $\sG$-orbits. In fact, any nilpotent representation in $P$ may be written as a direct sum of graded Jordan chains
of form
\[
 e_0\xrightarrow{p}e_1\xrightarrow{p}\cdots
 \xrightarrow{p}e_{\ell-1}\rightarrow0.
\]
Without loss of generality, we assume $p\in \mathcal{O}_a$. Define a $\C^*$-action by 
\[
 g(t)e_k=t^{-k}e_k.
\]
This preserves every vertex space and hence lies in the gauge group $\sG$, and can cancel the action of $\sT$ on $p$.

By excision, we have a long exact sequence: 
$$\to H_k^{\sT\times \sG}(P_{a-1})\to H_k^{\sT\times \sG}(P_{a})\to H_k^{\sT\times \sG}(\mathcal{O}_{a})\to. $$
Fix $X\in \mathcal{O}_a$, as $\sT\times \sG$ acts on $\mathcal{O}_a$ transitively, we have 
$$[\mathcal{O}_a/(\sT\times \sG)]=[\Spec \C/\mathrm{stab}(X)], $$ 
where $\mathrm{stab}(X)$ is the stabilizer of $X$ under $(\sT\times \sG)$-action. As we are working in $\mathbb{Q}$-coefficient, we have 
\begin{equation}\label{equ on va of odd coho conifolds}H_{\mathrm{odd}}^{\sT\times \sG}(\mathcal{O}_a)=H_{\mathrm{odd}}^{}([\Spec \C/\mathrm{stab}(X)])=0. \end{equation}
Then the long exact sequence breaks into short exact sequence: 
\begin{equation}\label{equ on ses of even coho conifolds}0\to  H_{2k}^{\sT\times \sG}(P_{a-1})\to H_{2k}^{\sT\times \sG}(P_{a})\to H_{2k}^{\sT\times \sG}(\mathcal{O}_{a})\to 0. \end{equation}
By induction, we may reduce to show the injectivity of 
$$j_{a*}: H^{\sT\times \sG}(Z(s)\cap (\mathcal{O}_{a} \times E_-))[\epsilon^{-1}]\to H^{\sT\times \sG}(\mathcal{O}_{a} \times E_-)[\epsilon^{-1}]. $$ 
When $X\in \mathcal{O}_{a}$, the section $s$ \eqref{equ on sect s} has constant rank (because it is $\sG$-equivariant, and linear in $c_-$). 
Denote $K_{\mathcal{O}_{a}}:=\mathrm{Ker}(s(X\in \mathcal{O}_{a},-))$ to be the kernel subbundle of $\mathcal{O}_{a} \times E_-$ on $\mathcal{O}_{a}$, which fits into 
$$0\to K_{\mathcal{O}_{a}}\to \mathcal{O}_{a} \times E_-\to N_{\mathcal{O}_{a}}\to 0, $$
where $N_{\mathcal{O}_{a}}$ denotes the quotient bundle. 
Note that  
$$Z(s)\cap (\mathcal{O}_{a} \times E_-)=\mathrm{Tot}(K_{\mathcal{O}_{a}}), $$
and $j_{a*}$ is the same as pushforward under the inclusion $K_{\mathcal{O}_{a}}\to \mathcal{O}_{a} \times E_-$. 

As $\C^*_{h}$ acts on $P$ trivially (hence also trivially on $\mathcal{O}_{a}$), and acts on $E_-$ by weight $(-1)$, 
the domain of $j_{a*}$ is a polynomial module in $h$ and 
$$j_{a}^*j_{a*}=\cdot e_{\sT\times \sG}(N_{\mathcal{O}_{a}})=(-h)^{\rk}+\mathrm{l.o.t.}.$$
Therefore $\cdot e_{\sT\times \sG}(N_{\mathcal{O}_{a}})$ is injective, so is $j_{a*}$. This finishes the proof.
\end{proof}

\subsection{Purity and integral injectivity}

In this section, we show that the purity of $H^{\sG}(R,\sw)$ follows from Davison's quadratic-deformation theorem \cite{Dav2}. 
Introducing $P_i=Y_iX_i, Q_i=X_{i-1}Y_{i-1}\in \mathrm{End}(V_i)$, then we can write 
$$\sW=\sum_{\sigma(i)=\sigma(i+1)}\sigma(i)\Phi_i(Q_i-P_i)+ \sum_{\sigma(i)\neq \sigma(i+1)}\sigma(i)P_iQ_i.$$
Taking trace gives a function 
$$\sw: R=\bigoplus_i \left(\Hom(V_i,V_{i+1})\oplus \Hom(V_{i+1},V_{i})\right)\oplus \bigoplus_{\sigma(i)=\sigma(i+1)} \Hom(V_{i},V_{i})\to \C. $$
Let $\widetilde{Q}$ be the tripled quiver of the cyclic quiver with $(m+n)$-vertices, i.e. $\widetilde{Q}$ is given by adding one loop to $Q$ at each vertex $i$ such that 
$\sigma(i)\neq \sigma(i+1)$. Denote the loop of $\widetilde{Q}$ at each vertex $i\in Q_0$ by $\Omega_i$, and define 
$$\sW^{\mu}:=\sum_{i\in Q_0}\left(\Omega_i(P_i-Q_i)+\frac{\mu_i}{2}\Omega_i^2\right). $$
where $\mu_i:=(\sigma(i)-\sigma(i+1))/2\in \{0,\pm 1\}$. Taking trace gives a function 
$$\sw^{\mu}:=\tr(\sW^{\mu}): \widetilde{R}:=R\oplus \bigoplus_{\sigma(i)\neq\sigma(i+1)} \Hom(V_{i},V_{i})\to \C. $$
\begin{Lemma}\label{lem on purity of coho gene}
By the $\sG$-equivariant change of variables: 
\begin{align*}
\Omega_i=
\left\{\begin{array}{rcl}-\sigma(i)\Phi_i+\frac{\sigma(i)}{2}(P_i+Q_i)\,,   & \mathrm{if} \,\,\sigma(i)=\sigma(i+1), \\ 
&  \\
\eta_i-\frac{1}{\mu_i}(P_i-Q_i)\,, \quad\quad &   \mathrm{if} \,\,\sigma(i)\neq \sigma(i+1), 
\end{array} \right. 
\end{align*}
we have 
$$\sw^{\mu}=\sw\boxplus \sum_{\sigma(i)\neq\sigma(i+1)}\frac{\mu_i}{2}\tr (\eta_i^2): \widetilde{R}=R\oplus \bigoplus_{\sigma(i)\neq\sigma(i+1)} \Hom(V_{i},V_{i})\to \C. $$
Moreover, the natural mixed Hodge structure on $H^{\sG}(R,\sw)$ is pure. 
\end{Lemma}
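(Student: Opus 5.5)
The proof splits into an algebraic identity, checked by direct substitution, and a purity statement deduced from that identity via Davison's quadratic-deformation theorem \cite{Dav2}.

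\textbf{The identity.} We expand $\sw^{\mu}=\tr\sum_i\big(\Omega_i(P_i-Q_i)+\tfrac{\mu_i}{2}\Omega_i^2\big)$ vertex by vertex after the substitution.

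At a vertex with $\sigma(i)=\sigma(i+1)$ we have $\mu_i=0$, so only $\tr\,\Omega_i(P_i-Q_i)$ appears. Substituting $\Omega_i=-\sigma(i)\Phi_i+\frac{\sigma(i)}{2}(P_i+Q_i)$ gives
$$\sigma(i)\tr\,\Phi_i(Q_i-P_i)+\tfrac{\sigma(i)}{2}\tr(P_i^2-Q_i^2),$$
using $\tr(Q_iP_i)=\tr(P_iQ_i)$. The first term is exactly the corresponding term of $\sw$.

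At a vertex with $\sigma(i)\neq\sigma(i+1)$ we have $\mu_i=\pm1=\sigma(i)$, so $1/\mu_i=\mu_i$. Substituting $\Omega_i=\eta_i-\mu_i(P_i-Q_i)$ and completing the square gives
$$\tfrac{\mu_i}{2}\tr\,\eta_i^2-\tfrac{\mu_i}{2}\tr(P_i-Q_i)^2.$$
Writing $\tr(P_i-Q_i)^2=\tr P_i^2+\tr Q_i^2-2\tr P_iQ_i$, the cross term is $\sigma(i)\tr P_iQ_i$, which is the corresponding term of $\sw$.

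It remains to show that the leftover terms $\pm\frac{1}{2}\tr P_i^2$ and $\pm\frac{1}{2}\tr Q_i^2$ cancel after summing over $i$. The key observations are:
\begin{itemize}
\item $\tr Q_{i+1}^2=\tr(X_iY_iX_iY_i)=\tr P_i^2$.
\item The coefficient of $\tr P_i^2$ from vertex $i$ is $\sigma(i)/2$ in the first case and $-\sigma(i)/2$ in the second.
\item The coefficient of $\tr Q_{i+1}^2$ from vertex $i+1$ is $-\sigma(i+1)/2$ in the first case and $-\sigma(i+1)/2$ in the second.
\end{itemize}
In each of the four combinations these sum to zero, because the sign at vertex $i$ is $\sigma(i)$ or $-\sigma(i)$ exactly according to whether $\sigma(i)=\sigma(i+1)$. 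This is the bookkeeping I would check most carefully.

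Finally, the substitution is an affine, $\sG$-equivariant automorphism of $\widetilde R$: it is triangular in the loop variables, since each $\Omega_i$ equals $\pm\Phi_i$ or $\eta_i$ plus a function of the $X,Y$ coordinates. This proves $\sw^{\mu}=\sw\boxplus\sum\frac{\mu_i}{2}\tr\eta_i^2$.

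\textbf{Purity.} I would proceed in three steps.
\begin{enumerate}
\item Since $\mu_i\neq0$ at every vertex carrying an $\eta_i$, the function $\sum\frac{\mu_i}{2}\tr\eta_i^2$ is a nondegenerate $\sG$-invariant quadratic form. The Thom–Sebastiani theorem together with the equivariant vanishing-cycle computation for a nondegenerate quadratic form gives $H^{\sG}(\widetilde R,\sw^\mu)\cong H^{\sG}(R,\sw)$, up to a Tate twist and shift, as mixed Hodge structures.
\item $\sW^{\mu}$ is precisely the quadratically deformed potential $\tr\,\Omega[X,Y]+\sum\frac{\mu_i}{2}\Omega_i^2$ on the tripled cyclic (affine $A$-type) quiver treated in \cite{Dav2}. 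Davison's theorem states that the critical cohomology of the stack of representations of the tripled quiver with potential deformed by $\sum\mu_i\Omega_i^2/2$ is pure, generalizing the undeformed case where dimensional reduction identifies it with Borel–Moore homology of the preprojective stack. I would invoke it directly, after matching conventions: the sign of $P_i-Q_i$, and the fact that $\tr\Omega_i(P_i-Q_i)$ agrees with $\tr\,\Omega[X,Y]$.
\item Purity of $H^{\sG}(\widetilde R,\sw^\mu)$ transfers to $H^{\sG}(R,\sw)$ through the isomorphism of step 1, because a Tate twist preserves purity.
\end{enumerate}

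The main obstacle is verifying that the hypotheses of \cite{Dav2} apply verbatim. In particular, it must allow arbitrary $\mu_i\in\{0,\pm1\}$, including vertices with $\mu_i=0$, which have no quadratic term, and it must apply to the cyclic quiver for every $\bv$.
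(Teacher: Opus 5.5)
Your proposal is correct and follows the paper's proof. The identity is the ``straightforward calculation'' the paper omits, and your bookkeeping is right: the $\tr P_i^2$ coefficient coming from vertex $i$ is always $\sigma(i+1)/2$, which cancels the coefficient $-\sigma(i+1)/2$ of $\tr Q_{i+1}^2=\tr P_i^2$ coming from vertex $i+1$. Purity is then transferred from $(\widetilde Q,\sW^\mu)$ via Thom--Sebastiani and \cite{Dav2}, exactly as in the paper.

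The one step you compress is the passage from Davison's result to purity of the stacky cohomology. The paper reads \cite[Thm.~A]{Dav2} as purity of the BPS cohomology $\operatorname{BPS}_{\mu,\bv}$, not of the stack cohomology directly. It then deduces purity of $H^{\sG}(\widetilde R,\sw^\mu)$ from the cohomological integrality isomorphism of \cite{DM}, which writes it as a graded symmetric algebra on $\operatorname{BPS}_{\mu,\bv}\otimes\bigoplus_{j\geqslant0}\mathbb L^{j+1/2}$, together with the fact that half Tate twists preserve purity. If you cite Davison only for BPS purity, you need that extra step.
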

\begin{proof}
The equality on functions is by a straightforward calculation.
Use shifted monodromic vanishing cycles with dualizing coefficients, and write
$\mathbb L=\mathbb Q(-1)[-2]$. As the trace pairing on $\End(V_{i})$ is non-degenerate, we have 
$$H^{\sG}\left(\bigoplus_{\sigma(i)\neq\sigma(i+1)} \End(V_{i}),\sum_{\sigma(i)\neq\sigma(i+1)}\frac{\mu_i}{2}\tr (\eta_i^2)\right)\cong H^{\sG}(\pt)\otimes_{\Q} \mathbb L^{-N/2}, 
\,\,\,N=\sum_{\sigma(i)\neq\sigma(i+1)}\bv_i^2. $$
The action of \(\sG\) on this line is trivial, since \(\sG\) is connected. Equivariant Thom-Sebastiani implies
an isomorphism of graded monodromic mixed Hodge structures: 
\[
H^\sG(\widetilde R,\sw^\mu)
\cong H^\sG(R,\sw)\otimes\mathbb L^{-N/2}.
\]
It remains to establish the purity of the left-hand-side. The potential \(\sW^\mu\) is a quadratic deformation of the canonical cubic potential. By \cite[Thm.~A]{Dav2}, its Hodge-theoretic BPS cohomology $\operatorname{BPS}^{\mathrm{hdg}}_{\mu,\mathbf v}$
is pure for every dimension vector \(\mathbf v\).
Let $\mathcal A_\mu^{\mathrm{hdg}}$ denote the critical cohomology of the representation stacks of $(\widetilde Q,\sW^\mu)$, summing over dimension vectors and equipped with the usual virtual normalization. The cohomological integrality theorem \cite{DM} gives an isomorphism of dimension-vector graded monodromic mixed Hodge structures
\[
\mathcal A_\mu^{\mathrm{hdg}}
\cong
\operatorname{Sym}\!\left(
 \bigoplus_{\mathbf v\ne0}
 \operatorname{BPS}^{\mathrm{hdg}}_{\mu,\mathbf v}
 \otimes
 \bigoplus_{j\geqslant 0}\mathbb L^{j+1/2}
\right).
\]
Every factor on the right is pure. Tensor products and graded symmetric powers preserve purity, so each dimension-vector component of $\mathcal A_\mu^{\mathrm{hdg}}$ is pure.
Passing from the virtual normalization to dualizing coefficients makes each component tensor by a power of $\mathbb L^{1/2}$, which also preserves purity. 
So $H^\sG(\widetilde R,\sw^\mu)$ is pure, hence is $H^\sG(R,\sw)$.   
\end{proof}

Combining this with Lemma \ref{lem on dim red} and Proposition \ref{prop on inj conifold}, we have 
\begin{Theorem}\label{thm on sp inj gene stack}
Let $\fX=[R/\sG]$ be a stack of representations of $(Q,\sW)$ associated with a generalized conifold \S \ref{sect on QP gene conifold}. Then the specialization map 
$$\mathsf{sp}: H^{\sT}(\fX,\sw)\to H^{\sT}(\fX)$$
is injective. 
\end{Theorem}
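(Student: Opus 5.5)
By Lemma \ref{lem on dim red}, the specialization map $\mathsf{sp}\colon H^{\sT}(\fX,\sw)\to H^{\sT}(\fX)$ is identified with the pushforward
$$\iota_*\colon H^{\sT\times\sG}(Z(s))\to H^{\sT\times\sG}(B)\cong H_{\sT\times\sG}(\pt).$$
By Proposition \ref{prop on inj conifold}, this map becomes injective after inverting $\epsilon$. The injectivity of $\mathsf{sp}$ itself will therefore follow once we know that $H^{\sT}(\fX,\sw)$ has no $\epsilon$-torsion. Indeed, suppose $\alpha$ lies in the kernel. Then its image in the localization vanishes, so $\epsilon^N\alpha=0$ for some $N$. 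If there is no torsion, this forces $\alpha=0$.

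The plan is to show that $H^{\sT}(\fX,\sw)=H^{\sT\times\sG}(R,\sw)$ is a free module over $H_{\sT}(\pt)=\Q[h,\epsilon]$. In particular it is then torsion free.

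\begin{enumerate}
\item By Lemma \ref{lem on purity of coho gene}, the mixed Hodge structure on $H^{\sG}(R,\sw)$ is pure.
\item We run the $\sT$-equivariant spectral sequence, i.e.\ the Leray spectral sequence of $[R/(\sT\times\sG)]\to[R/\sG]$ fibred over $B\sT$:
$$E_2=H^{\sG}(R,\sw)\otimes H_{\sT}(\pt)\Rightarrow H^{\sT\times\sG}(R,\sw).$$
\item Since $H_\sT(\pt)$ is pure of Tate type with weights equal to degrees, purity of $E_2$ forces every differential to vanish. This is the standard argument: each differential raises cohomological degree by one more than it changes weight, so it maps between pure pieces of different weights. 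Here we use that the mixed Hodge structures are compatible with the $\sT$-action because $\sw$ is $\sT$-invariant, as in \cite{Dav2}.
\item Hence the spectral sequence degenerates at $E_2$, and $H^{\sT\times\sG}(R,\sw)\cong H^{\sG}(R,\sw)\otimes H_{\sT}(\pt)$ is a free $H_{\sT}(\pt)$-module, i.e.\ the cohomology is equivariantly formal.
\end{enumerate}

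A cleaner alternative is available. Via dimensional reduction, $H^{\sT\times\sG}(R,\sw)\cong H^{\sT\times\sG}(Z(s))$ is Borel–Moore homology of a variety. For such groups, purity of the non-equivariant version gives equivariant formality by the usual weight argument applied directly.

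The main obstacle is step 3: making the weight argument rigorous in the monodromic vanishing-cycle setting. One must check that the Hodge structure coming from Davison's theorem is the one compatible with the equivariant spectral sequence, including any half-Tate twists $\mathbb L^{1/2}$. If that compatibility is awkward, I would instead argue on the dimensionally reduced side. Purity transfers through the isomorphism $i_*\pi^*$, which is an isomorphism of mixed Hodge structures up to a Tate twist. Equivariant formality of Borel–Moore homology of the pure $\sT\times\sG$-variety $Z(s)$ is then classical.
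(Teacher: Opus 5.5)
Your proposal is correct and is essentially the paper's proof. The paper likewise combines Lemma \ref{lem on dim red} and Proposition \ref{prop on inj conifold} to get injectivity after inverting $\epsilon$. It then uses the purity from Lemma \ref{lem on purity of coho gene} to degenerate the Borel spectral sequence $H(B\sT)\otimes H^{\sG}(R,\sw)\Rightarrow H^{\sG\times\sT}(R,\sw)$ at $E_2$, which gives freeness over $\Q[h,\epsilon]$ and hence no $\epsilon$-torsion. One small correction: the relevant fibration is $[R/(\sT\times\sG)]\to B\sT$ with fibre $[R/\sG]$; there is no map $[R/(\sT\times\sG)]\to[R/\sG]$.
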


\begin{proof}
There is a Borel spectral sequence (e.g.~\cite[\S 9.2]{Dav3}): 
$$E_2^{p,q}=H^p_{}(B\sT)\otimes H^q_{\sG}(R,\varphi_\sw \omega)\Longrightarrow H^{p+q}_{\sG\times \sT}(R,\varphi_\sw \omega). $$
By Lemma \ref{lem on purity of coho gene}, $E_2^{p,q}$ is pure of weight $p+q$. Every differential raises total degree by one and preserves weights, so all differentials vanish. By induction, the spectral sequence degenerates at $E_2$ page. 

From this, one can get a noncanonical isomorphism of graded $H(B\sT)$-modules
$$H^{\sG\times \sT}(R,\sw)\cong H(B\sT)\otimes H^\sG(R,\sw). $$
In particular, $H^{\sG\times \sT}(R,\sw)$ is free over $H^*(B\sT)=\Q[h,\epsilon]$, and its localization map at $\epsilon$ is injective. Together with Lemma \ref{lem on dim red} and Proposition \ref{prop on inj conifold}, this proves the claimed unlocalized injectivity.
\end{proof}

\subsection{Application to moduli of framed representations}

Fix (framing) dimension vectors 
$$\underline{\bd}=(\bd_{\In},\bd_{\Out})\in \mathbb{N}^{Q_0}\times \mathbb{N}^{Q_0}, $$ 
and the framed representation space: 
\begin{equation}\label{equ rvab}
R(\mathbf{v},\underline{\bd}):=\bigoplus_{a\in Q_1}\Hom(\bC^{\mathbf v_{t(a)}},\bC^{\mathbf v_{h(a)}})\oplus \bigoplus_{i\in Q_0}\left(\Hom(\bC^{\bd_{\mathrm{in},i}},\bC^{\mathbf v_i})\oplus \Hom(\bC^{\mathbf v_i},\bC^{\bd_{\Out,i}})\right).
\end{equation}
There is a projection to the unframed representation space 
$$p: R(\mathbf{v},\underline{\bd})\to R(\mathbf{v}), $$
which is $\sG$-equivariant with respect to the natural action. We extend $\sT$-action to the framed representation space such 
that $p$ is equivariant. Decorate $R(\mathbf{v},\underline{\bd})$ with the pullback potential function by $p$: 
$$\sw:=\tr(\sW): R(\mathbf{v},\underline{\bd})\to \C. $$ 
\begin{Theorem}\label{thm on sp of framed gene conifolds}
We have an injective specialization map for framed representation stack: 
$$\mathsf{sp}: H^{\sT}([R(\mathbf{v},\underline{\bd})/\sG],\sw)\to H^{\sT}([R(\mathbf{v},\underline{\bd})/\sG]). $$
Moreover, for any generic stability condition, we have an injective specialization map for the GIT quotient: 
$$\mathsf{sp}: H^{\sT}(R(\mathbf{v},\underline{\bd})/\!\!/\sG,\sw)\to H^{\sT}(R(\mathbf{v},\underline{\bd})/\!\!/\sG). $$
\end{Theorem}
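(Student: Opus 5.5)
The framed stack $[R(\bv,\underline{\bd})/\sG]$ is the total space of the $\sG$-equivariant vector bundle $p\colon R(\bv,\underline{\bd})\to R(\bv)$. Its fiber is $F:=\bigoplus_i\big(\Hom(\C^{\bd_{\In,i}},\C^{\bv_i})\oplus\Hom(\C^{\bv_i},\C^{\bd_{\Out,i}})\big)$, and the function on it is the pullback $p^*\sw$. The plan is to reduce the first claim to Theorem \ref{thm on sp inj gene stack} by homotopy invariance, and then obtain the GIT statement from Proposition \ref{prop on nonabel stab and sp}.

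\emph{Step 1 (stack).} Since $p$ is smooth (an affine bundle), smooth pullback commutes with vanishing cycles: $p^*\varphi_\sw\omega_{R(\bv)}[2\rk F]\cong\varphi_{p^*\sw}\omega_{R(\bv,\underline{\bd})}$ up to a Tate twist. Homotopy invariance of equivariant (co)homology along vector bundles then shows that
\[
p^*\colon H^{\sT\times\sG}(R(\bv),\sw)\to H^{\sT\times\sG}(R(\bv,\underline{\bd}),\sw)
\]
is an isomorphism, and likewise for ordinary cohomology. The specialization map is functorial for l.c.i.\ (here smooth) pullback by \cite[Prop.~7.17]{COZZ1}, so we get a commutative square $\mathsf{sp}\circ p^*=p^*\circ\mathsf{sp}$ in which both vertical arrows are isomorphisms. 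Injectivity of $\mathsf{sp}$ on the framed stack therefore follows directly from Theorem \ref{thm on sp inj gene stack}.

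Alternatively, one can rerun the argument of Theorem \ref{thm on sp inj gene stack} in the framed setting. Dimensional reduction still applies, because $p^*\sw=\langle c_+,s(X,c_-)\rangle$ with $B$ replaced by $B\times F$. The nilpotent-orbit stratification of Proposition \ref{prop on inj conifold} goes through after taking a product with $F$. Purity is unaffected, since tensoring with the cohomology of a contractible fiber does not change weights. This route is only a back-up; the pullback route is cleaner.

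\emph{Step 2 (GIT quotient).} Fix a generic stability condition and let $X=R(\bv,\underline{\bd})/\!\!/\sG$, which is smooth with the descended function $\sw$. Let $\fX=[R(\bv,\underline{\bd})/\sG]$ and let $j\colon X\hookrightarrow\fX$ be the open inclusion of the stable locus. Proposition \ref{prop on nonabel stab and sp} provides the interpolation map $\bPsi$, which is injective by \eqref{psi is inj}, and it satisfies $\mathsf{sp}\circ\bPsi=\bPsi\circ\mathsf{sp}$. Suppose $\mathsf{sp}(\alpha)=0$ for some $\alpha\in H^\sT(X,\sw)$. Then $\mathsf{sp}(\bPsi\alpha)=0$, so $\bPsi\alpha=0$ by Step 1, and hence $\alpha=j^*\bPsi\alpha=0$.

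\emph{Main obstacle.} The delicate point is whether Proposition \ref{prop on nonabel stab and sp} is available for the framings in question. The paper constructs $\bPsi$ via \cite[Def.~9.1]{COZZ1}, using stable envelopes for a one-dimensional torus, and that construction needs hypotheses on the framing and stability. If it applies only to certain framings, one must check that the quiver here (which is symmetric) with the pullback potential meets these hypotheses for arbitrary $\underline{\bd}$ and generic stability. If it does not, I would try instead to show directly that the restriction $j^*$ on critical cohomology is surjective, for instance via a Kirwan-type surjectivity argument. I would also need to check that $\mathsf{sp}$ commutes with $j^*$, which holds by functoriality for open restriction. Together these would give injectivity in the same way. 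A second, minor check is that the $\sT$-action on the framing arrows is chosen so that $p$ is $\sT$-equivariant; the statement assumes this.
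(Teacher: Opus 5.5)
Your proposal is correct and follows the paper's proof. The paper likewise uses that $p^*$ is an isomorphism on both critical and ordinary cohomology, since the function is pulled back along the vector bundle $p$, and that $p^*$ commutes with $\mathsf{sp}$; this reduces the stack statement to Theorem \ref{thm on sp inj gene stack}, and the paper then deduces the GIT statement from Proposition \ref{prop on nonabel stab and sp} exactly as in your Step 2 (the paper invokes that proposition without further hypothesis checks, so your Kirwan-type fallback and the rerun of the conifold argument are not needed).
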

\begin{proof}
As the function is pullback via $p$, we have a commutative diagram 
\begin{align*}
\xymatrix{
H^{\sT}([R(\mathbf{v},\underline{\bd})/\sG],\sw) \ar[r]^{\mathsf{sp}} & H^{\sT}([R(\mathbf{v},\underline{\bd})/\sG])  \\
H^{\sT}([R(\mathbf{v})/\sG],\sw)\ar[r]^{\mathsf{sp}} \ar[u]^-{p^*}_{\cong} &  H^{\sT}([R(\mathbf{v})/\sG]) \ar[u]^-{p^*}_{\cong}.
}
\end{align*}
Then we apply Theorem \ref{thm on sp inj gene stack} and Proposition \ref{prop on nonabel stab and sp}. 
\end{proof}
\begin{Example}(Hilbert schemes of points on generalized conifolds)

Let $\mathbf{v}=(d,\ldots,d)$, $\bd_{\In}=(1,0,\ldots,0)$, $\bd_{\Out}=(0,\ldots,0)$, Choose stability in the $\mathrm{DT}$-chamber, i.e. 
$$\theta=(-(m+n-1)-\epsilon,1,1,\ldots,1), \quad \mathrm{where} \,\,0<\epsilon\ll 1. $$ 
Then as a subscheme of $R(\mathbf{v},\underline{\bd})/\!\!/\sG$, the critical locus of $\sw$: 
$$\Crit(\sw)\cong \Hilb^d(X) $$
is the Hilbert scheme of $d$-points on $X$. 
Here $X$ is a smooth Calabi-Yau 3-fold which is a crepant resolution of \eqref{gene conifold}. 
In this case, the specialization map is injective. 
\end{Example}

\section{$\mathbb{C}^3/G$ when $G<SO(3,\mathbb{C})$}


For a finite subgroup $G<SL(3,\C)$, there is an associated McKay quiver $Q$ \cite{IN} with a
cubic potential $\sW$ \cite[\S 4.4]{Gin}. Geometrically, there always exists a crepant resolution of the affine quotient: 
$$\widetilde{\C^3/G}\to \C^3/G. $$
By using \cite{BKR}, we know that the derived category of compactly supported sheaves on $\widetilde{\C^3/G}$ is 
equivalent to the derived category of finite dimensional representations of $(Q,\sW)$ \cite[Cor.~4.4.8]{Gin}

To connect to the theory of quantum groups \cite{COZZ1, COZZ2}, we restrict to the case when 
the quiver is symmetric, up to conjugation, they are the cases when $G<SL(2,\mathbb{C})$ or $G<SO(3,\mathbb{C})$ (called \textit{polyhedral groups}). Such finite subgroups have been studied from different perspective, e.g. 
$G$-$\Hilb$ schemes, Craw-Ishii conjecture.

When $G<SL(2,\mathbb{C})$, then $(Q,\sW)$ is the affine $A,D,E$-type tripled quiver with canonical cubic potential, 
the injectivity problem was settled in \cite[Lem.~A.4]{COZZ2}. Below we consider the case of polyhedral groups.

\subsection{Classification of finite subgroups $G<SO(3,\mathbb{C})$} \label{sect on finite subgroup SO(3)}
The classification is given as follows, see e.g.~\cite[\S 2]{NS}: 
\begin{itemize}
\setlength{\parskip}{1ex}

\item ($A$-type) Cyclic group $\mathbb{Z}/n\mathbb{Z}$,

\item ($D$-type) Dihedral group $D_{2n}$,

\item ($E$-type) Tetrahedral group $A_4$, Octahedral group $S_4$, Icosahedral group $A_5$. 

\end{itemize}

For cyclic groups, we are reduced to the affine $A$-type quiver with canonical cubic potential \cite[\S 3.1]{NS} mentioned above. 
For $E$-type, specialization map is not injective in general (even after localization with respect to the flavor torus), 
see \S \ref{app on type E}.
The remainder of this section deals with $D$-type cases.

\subsection{Description by quiver with potential in $D$-type}\label{sect on QP of polyhedra}

Below we use the convention that composition of paths in quivers are written from right to the left.
We follow the notation of \cite[\S 3]{NS}.

\subsubsection{$D_{4}$ case}

\tikzset{
  dq vertex/.style={
    inner sep=2.2pt,
    outer sep=0pt,
    font=\normalsize
  },
  dq arrow/.style={
    -{Latex[length=1.7mm,width=1.2mm]},
    line width=0.55pt,
    shorten >=1pt,
    shorten <=1pt
  },
  dq label/.style={
    font=\small,
    inner sep=1.5pt
  }
}

\newcommand{\DFourQuiver}{%
\begin{tikzpicture}[x=1cm,y=1cm]

  \node[dq vertex] (z)  at (0,2.8)   {$0$};
  \node[dq vertex] (zp) at (-3.4,-2) {$0'$};
  \node[dq vertex] (q)  at (3.4,-2)  {$1$};
  \node[dq vertex] (qp) at (0,0.1)   {$1'$};

  \draw[dq arrow] (z) to[bend right=6]
    node[dq label,left] {$a$} (zp);
  \draw[dq arrow] (zp) to[bend right=6]
    node[dq label,right] {$A$} (z);

  \draw[dq arrow] (z) to[bend left=6]
    node[dq label,right] {$b$} (q);
  \draw[dq arrow] (q) to[bend left=6]
    node[dq label,left] {$B$} (z);

  \draw[dq arrow] (z) to[bend left=15]
    node[dq label,right] {$c$} (qp);
  \draw[dq arrow] (qp) to[bend left=15]
    node[dq label,left] {$C$} (z);

  \draw[dq arrow] (zp) to[bend right=5]
    node[dq label,below] {$d$} (q);
  \draw[dq arrow] (q) to[bend right=5]
    node[dq label,above] {$D$} (zp);

  \draw[dq arrow] (zp) to[bend left=13]
    node[dq label,pos=.65,above,sloped] {$e$} (qp);
  \draw[dq arrow] (qp) to[bend left=13]
    node[dq label,below,sloped] {$E$} (zp);

  \draw[dq arrow] (q) to[bend right=13]
    node[dq label,pos=.65,above,sloped] {$f$} (qp);
  \draw[dq arrow] (qp) to[bend right=13]
    node[dq label,below,sloped] {$F$} (q);

\end{tikzpicture}%
}

\newcommand{\DFourPotential}{%
\begin{aligned}
  \sW={}&Cea+ADb+BFc+Efd-Bda-Cfb-AEc-DFe.
\end{aligned}%
}

\begin{center}
  \scalebox{0.76}{\DFourQuiver}
\end{center}

\begin{equation}\label{equ on d4 quiver potential}
  \DFourPotential
\end{equation}

\subsubsection{$D_{2n}$ when $n=2m$ $(m\geqslant 2)$}\label{sect on even}

${}$ \\

\tikzset{
  dq vertex/.style={inner sep=2.2pt,outer sep=0pt,font=\normalsize},
  dq arrow/.style={-{Latex[length=1.7mm,width=1.2mm]},
                   line width=0.55pt,shorten >=1pt,shorten <=1pt},
  dq label/.style={font=\small,inner sep=1.5pt},
  dq loop/.style={dq arrow,loop above,out=120,in=60,
                  min distance=11mm,looseness=6}
}

 \newcommand{\EvenDihedralQuiver}{%
\begin{tikzpicture}[x=1cm,y=1cm]
  \node[dq vertex] (z)  at (0,1.3)    {$0$};
  \node[dq vertex] (zp) at (0,-1.3)   {$0'$};
  \node[dq vertex] (q1) at (2.15,0)   {$1$};
  \node[dq vertex] (q2) at (4.1,0)    {$2$};
  \node[dq vertex] (qm2) at (7.3,0)  {$m-2$};
  \node[dq vertex] (qm1) at (9.25,0) {$m-1$};
  \node[dq vertex] (r)  at (11.4,1.3)  {$m$};
  \node[dq vertex] (rp) at (11.4,-1.3) {$m'$};

  \draw[dq arrow] (z) to[bend left=13]
    node[dq label,right] {$a$} (zp);
  \draw[dq arrow] (zp) to[bend left=13]
    node[dq label,left] {$A$} (z);
  \draw[dq arrow] (z) to[bend left=12]
    node[dq label,above,sloped] {$c$} (q1);
  \draw[dq arrow] (q1) to[bend left=12]
    node[dq label,below,sloped] {$C$} (z);
  \draw[dq arrow] (zp) to[bend left=12]
    node[dq label,above,sloped] {$d_0$} (q1);
  \draw[dq arrow] (q1) to[bend left=12]
    node[dq label,below,sloped] {$D_0$} (zp);

  \draw[dq arrow] (q1) to[bend left=13]
    node[dq label,above] {$d_1$} (q2);
  \draw[dq arrow] (q2) to[bend left=13]
    node[dq label,below] {$D_1$} (q1);
  \draw[dashed,line width=0.55pt] (q2.east) -- (qm2.west);
  \draw[dq arrow] (qm2) to[bend left=13]
    node[dq label,above] {$d_{m-2}$} (qm1);
  \draw[dq arrow] (qm1) to[bend left=13]
    node[dq label,below] {$D_{m-2}$} (qm2);
  \path (q1) edge[dq loop] node[dq label,above] {$u_1$} (q1)
        (q2) edge[dq loop] node[dq label,above] {$u_2$} (q2)
        (qm2) edge[dq loop] node[dq label,above] {$u_{m-2}$} (qm2)
        (qm1) edge[dq loop] node[dq label,above] {$u_{m-1}$} (qm1);

  \draw[dq arrow] (qm1) to[bend left=12]
    node[dq label,above,sloped] {$C'$} (r);
  \draw[dq arrow] (r) to[bend left=12]
    node[dq label,below,sloped] {$c'$} (qm1);
  \draw[dq arrow] (qm1) to[bend left=12]
    node[dq label,above,sloped] {$B'$} (rp);
  \draw[dq arrow] (rp) to[bend left=12]
    node[dq label,below,sloped] {$b'$} (qm1);
  \draw[dq arrow] (r) to[bend right=13]
    node[dq label,left] {$a'$} (rp);
  \draw[dq arrow] (rp) to[bend right=13]
    node[dq label,right] {$A'$} (r);
\end{tikzpicture}%
}

\newcommand{\EvenDihedralPotential}{%
\begin{aligned}
\sW={}&-Cd_0 a- AD_0c +d_0D_0u_1+cCu_1-\sum_{i=1}^{m-2}D_id_iu_i
   +\sum_{i=2}^{m-1}d_{i-1}D_{i-1}u_i\\
 &-b'B'u_{m-1}-c'C'u_{m-1}-C'b'a'-A'B'c'.
\end{aligned}%
}

\newcommand{\OddDihedralQuiver}{
\begin{tikzpicture}[x=1cm,y=1cm]
  \node[dq vertex] (z)  at (0,1.3)  {$0$};
  \node[dq vertex] (zp) at (0,-1.3) {$0'$};
  \node[dq vertex] (q1) at (2.15,0) {$1$};
  \node[dq vertex] (q2) at (4.1,0)  {$2$};
  \node[dq vertex] (qm2) at (7.3,0) {$m-2$};
  \node[dq vertex] (qm1) at (9.25,0) {$m-1$};
  \node[dq vertex] (qm) at (11.2,0) {$m$};

  \draw[dq arrow] (z) to[bend left=13]
    node[dq label,right] {$a$} (zp);
  \draw[dq arrow] (zp) to[bend left=13]
    node[dq label,left] {$A$} (z);
  \draw[dq arrow] (z) to[bend left=12]
    node[dq label,above,sloped] {$c$} (q1);
  \draw[dq arrow] (q1) to[bend left=12]
    node[dq label,below,sloped] {$C$} (z);
  \draw[dq arrow] (zp) to[bend left=12]
    node[dq label,above,sloped] {$d_0$} (q1);
  \draw[dq arrow] (q1) to[bend left=12]
    node[dq label,below,sloped] {$D_0$} (zp);

  \draw[dq arrow] (q1) to[bend left=13]
    node[dq label,above] {$d_1$} (q2);
  \draw[dq arrow] (q2) to[bend left=13]
    node[dq label,below] {$D_1$} (q1);
  \draw[dashed,line width=0.55pt] (q2.east) -- (qm2.west);
  \draw[dq arrow] (qm2) to[bend left=13]
    node[dq label,above] {$d_{m-2}$} (qm1);
  \draw[dq arrow] (qm1) to[bend left=13]
    node[dq label,below] {$D_{m-2}$} (qm2);
  \draw[dq arrow] (qm1) to[bend left=13]
    node[dq label,above] {$d_{m-1}$} (qm);
  \draw[dq arrow] (qm) to[bend left=13]
    node[dq label,below] {$D_{m-1}$} (qm1);
  \path (q1) edge[dq loop] node[dq label,above] {$u_1$} (q1)
        (q2) edge[dq loop] node[dq label,above] {$u_2$} (q2)
        (qm2) edge[dq loop] node[dq label,above] {$u_{m-2}$} (qm2)
        (qm1) edge[dq loop] node[dq label,above] {$u_{m-1}$} (qm1)
        (qm) edge[dq loop] node[dq label,above] {$u_m$} (qm)
        (qm) edge[dq arrow,loop below,out=240,in=300,
                  min distance=11mm,looseness=6]
             node[dq label,below] {$v$} (qm);
\end{tikzpicture}%
}

\newcommand{\OddDihedralPotential}{%
\begin{aligned}
\sW={}&- C d_0 a-A D_0 c+d_0 D_0 u_1+c C u_1-\sum_{i=1}^{m-1}D_id_i u_i
   +\sum_{i=2}^{m}d_{i-1}D_{i-1}u_i-u_mv^2.
\end{aligned}%
}

\begin{center}
 \scalebox{0.9}{\EvenDihedralQuiver}
\end{center}
\begin{equation}
  \EvenDihedralPotential
\end{equation}

\subsubsection{$D_{2n}$ when $n=2m+1$ $(m\geqslant 1)$}\label{sect on odd}

${}$ \\

\begin{center}
  \scalebox{0.9}{\OddDihedralQuiver}
\end{center}
\begin{equation}
  \OddDihedralPotential
\end{equation}

We separate arrows of the quiver $Q$ into two groups,~e.g.~\cite[Lem.~3.3]{MR2}.
\begin{Lemma}\label{lem on cut of arrow dihedral}
Let 
\begin{align*}
C_+:=
\left\{\begin{array}{rcl}\big\{a, A, a',A',u_1,\ldots, u_{m-1} \big\}\,,   & \mathrm{if} \,\,n=2m,\,\, m\geqslant2, \\ 
&  \\
\big\{a, A,u_1,\ldots, u_{m} \big\}\,, \quad \quad &   \mathrm{if} \,\,n=2m+1,\,\, m\geqslant1. 
\end{array} \right. 
\end{align*}
$$C_-:=\left\{a\in Q_1\,|\,a \notin  C_+\right\}. $$
Then every monomial of $\sW$ contains exactly one arrow from $C_+$ and exactly two arrows from $C_-$. 
\end{Lemma}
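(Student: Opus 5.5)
The statement is a finite combinatorial check, so I will verify it monomial by monomial, separately in the two cases $n=2m$ and $n=2m+1$. In each case I list the cubic monomials of $\sW$ as written in \S\ref{sect on even} and \S\ref{sect on odd}. For each one I count how many of its three arrows lie in $C_+$; the remaining two then automatically lie in $C_-$, since $C_-$ is the complement of $C_+$ and every monomial of $\sW$ is cubic. So the whole claim reduces to: every monomial contains exactly one arrow of $C_+$.

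\emph{Case $n=2m$.} The monomials fall into four groups.
\begin{itemize}
\item $Cd_0a$ and $AD_0c$ contain exactly one of $a,A$, and the other two arrows ($C,d_0$, resp.\ $D_0,c$) are not in $C_+$.
\item $d_0D_0u_1$ and $cCu_1$ contain only $u_1$ from $C_+$.
\item $D_id_iu_i$ for $1\le i\le m-2$ and $d_{i-1}D_{i-1}u_i$ for $2\le i\le m-1$ contain only $u_i$. Here the index ranges matter: they guarantee that $u_i\in C_+$, i.e.\ $1\le i\le m-1$, and that no $d_j,D_j$ is a loop.
\item $b'B'u_{m-1}$ and $c'C'u_{m-1}$ contain only $u_{m-1}$, while $C'b'a'$ and $A'B'c'$ contain only $a'$, resp.\ $A'$.
\end{itemize}
So each monomial meets $C_+$ exactly once.

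\emph{Case $n=2m+1$.} The same check applies, with three differences.
\begin{itemize}
\item The sum $\sum_{i=1}^{m-1}D_id_iu_i$ uses $u_1,\dots,u_{m-1}$, and $\sum_{i=2}^{m}d_{i-1}D_{i-1}u_i$ uses $u_2,\dots,u_m$. All of these lie in $C_+$, and each such monomial contains exactly one loop.
\item The new term $u_mv^2$ contains $u_m\in C_+$ once and $v\in C_-$ twice. Hence "exactly two arrows from $C_-$" must be read with multiplicity.
\item The first four monomials are as before.
\end{itemize}
Finally, the case $D_4$ is excluded from the lemma, which is consistent with its potential \eqref{equ on d4 quiver potential} having no loops to play the role of the $u_i$.

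The only real point of care is the bookkeeping at the boundary indices $i=1$ and $i=m-1$ (resp.\ $m$), together with the repeated arrow in $u_mv^2$. There is no conceptual obstacle. The consequence one wants to record is the analogue of \eqref{equ on dim red of w}: with $E_\pm$ the representation spaces of the arrows in $C_\pm$,
$$\sw=\langle c_+,s(c_-)\rangle,$$
where $s(c_-)$ is quadratic in $c_-$. This sets up dimensional reduction along $E_+$ for the $D$-type cases.
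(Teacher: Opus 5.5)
Your monomial-by-monomial check is correct and is essentially the paper's approach: the paper states this lemma without proof, only pointing to a reference, and the intended verification is the direct inspection you carry out, including reading "two arrows from $C_-$" with multiplicity for $u_mv^2$. One aside: your closing remark on $D_4$ is misleading, because there $C_z=\{a,A,f,F\}$ meets every monomial of \eqref{equ on d4 quiver potential} exactly once (indeed $\sw=\tr(x(zy-yz))$ is linear in $z$); $D_4$ is excluded only because it carries a larger flavor torus and is treated separately, not because no such cut exists.
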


\subsection{Representations of quivers}

We first deal with cases except $D_4$. 
Fix a dimension vector $\bv\in \mathbb{N}^{Q_0}$.
The representation space of the quiver $Q$ (without framing) is 
$$R=R(\bv):=\underbrace{\bigoplus_{a\in C_+}\Hom(\bC^{\mathbf v_{t(a)}},\bC^{\mathbf v_{h(a)}})}_{P}\oplus 
\underbrace{\bigoplus_{a\in C_-}\Hom(\bC^{\mathbf v_{t(a)}},\bC^{\mathbf v_{h(a)}})}_{E}, $$
where $t(a)$, $h(a)$ denote the tail, head of an arrow $a\in Q_1$ respectively.

By Lemma \ref{lem on cut of arrow dihedral}, the potential may be written as 
\begin{equation}\sw=\tr(\sW)=\sw(p,e)=\langle p,s(e) \rangle, \nonumber \end{equation}
where $p\in P$, $e\in E_{}$, and 
\begin{equation}\label{equ on sect s dihedral} s=s(e): E\to P^\vee \end{equation}
is quadratic in $e$. We take the gauge group  
$$\sG:=\mathrm{GL}(\bv)=\prod_{i\in Q_0}\mathrm{GL}(\bv_i), $$ 
and one dimensional flavor torus:
\begin{equation}\label{equ on flavor torus Dn}  \sT:=\C^*_{t}, \end{equation}
which acts on arrows in $C_+$, $C_-$ by weight $2$, $-1$ respectively.
It  preserves the potential $\sW$ and $\sw:=\tr\sW$. 

\begin{Remark}\label{rmk on D4}
For the $D_4$ case, we separate arrows in \eqref{equ on d4 quiver potential} into three groups 
$$C_x:=\left\{b,B,e,E\right\}, \quad C_y:=\left\{c,C,d,D\right\}, \quad C_z:=\left\{a,A,f,F\right\}. $$
After fixing a dimension vector $\bv\in \mathbb{N}^{Q_0}$, the representation space 
$$R=R(\bv):=
\underbrace{\bigoplus_{a\in C_x}\Hom(\bC^{\mathbf v_{t(a)}},\bC^{\mathbf v_{h(a)}})}_{X}\oplus 
\underbrace{\bigoplus_{a\in C_y}\Hom(\bC^{\mathbf v_{t(a)}},\bC^{\mathbf v_{h(a)}})}_{Y}\oplus 
\underbrace{\bigoplus_{a\in C_z}\Hom(\bC^{\mathbf v_{t(a)}},\bC^{\mathbf v_{h(a)}})}_{Z}. $$
The potential may be written in the form 
\begin{equation}\sw=\tr(\sW)=\sw(x,y,z)=\tr(x(zy-yz)), \nonumber \end{equation}
where $x\in X$, $y\in Y$, $z\in Z$.  We take the gauge group  
$$\sG:=\prod_{i\in Q_0}\mathrm{GL}(\bv_i), $$ 
and two dimensional flavor group:  
\begin{equation}\label{equ on flavor torus D4}\sT=\{(t_x,t_y,t_z)\in (\C^*)^3\,|\,t_x t_y t_z=1 \}, \end{equation}
which acts on arrows from $C_x, C_y, C_z$ by $t_x, t_y, t_z$ respectively, so it preserves $\sW$ and $\sw$. 
\end{Remark}

\subsection{A base change lemma on vanishing cycles}


In this section, we show a base change property of pushforward of vanishing cycle functors using \cite{DP}. All functors below are derived. 

Let $E$ be a vector space, $B$ be a scheme, and 
$$q: X:=B\times E\to \C $$
be a regular function which is homogeneous of degree $d\geqslant 1$ with respect to $E$ coordinates (no restriction on the dependence on $B$), i.e. for any $t\in \C$, we have
$$q(b,t\cdot e)=t^d q(b,e). $$

\begin{Definition}
Let $\pi: B\times E\to B$ be the projection. For any constructible complex $F\in \mathrm{D}_c^b(B)$, denote 
$$K_q(F):=\pi_*\varphi_q(\pi^*F), $$
where $\varphi_q$ denotes the vanishing cycle functor of $q$. 
\end{Definition}

\begin{Lemma}\label{lem on dist triang}
Let $\pi_1: q^{-1}(1)\to B$ be the projection, then for any $F\in \mathrm{D}_c^b(B)$, there is a distinguished triangle
$$K_q(F)\to F \to \pi_{1*}\pi_{1}^*F\to. $$
\end{Lemma}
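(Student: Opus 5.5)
The triangle should come from the standard description of vanishing cycles of a function that is homogeneous along the fibres of a vector bundle. Write $X=B\times E$ and $X_0=q^{-1}(0)$. Recall that for any $G\in \mathrm{D}^b_c(X)$ there is a triangle $\psi_q G\to G|_{X_0}\to \varphi_q G\to$ (up to the shift/sign conventions fixed in the paper). Here $\psi_q$ is the nearby cycle functor and $G|_{X_0}$ denotes $i_0^*G$ for $i_0:X_0\hookrightarrow X$. We apply this with $G=\pi^*F$ and then push forward along $\pi$, restricted to $X_0$. Throughout we use that $\varphi_q(\pi^*F)$ is supported on $X_0$, so $\pi_*$ of it may be computed on $X_0$.

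\emph{Step 1: the middle term.} We need $\pi_*(i_0^*\pi^*F)\cong F$. The cone $X_0\subset B\times E$ is stable under the scaling $t\cdot(b,e)=(b,te)$ for $t\in\C$, because $q(b,te)=t^dq(b,e)$. In particular it contracts onto the zero section $B\times\{0\}$, and $\pi^*F|_{X_0}$ is $\C^*$-monodromic, indeed constant along the orbits. The contraction lemma for conic complexes (Springer/Braden, or \cite[Lem.~A.?]{DP}) then gives $\pi_*\,i_0^*\pi^*F\cong F$, the isomorphism being restriction to the zero section.

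\emph{Step 2: the nearby term.} Here we use the homogeneity to identify the Milnor fibre globally. The map $q:X\setminus X_0\to\C^*$ is a locally trivial fibration over $\C^*$, trivialized by scaling in $E$. Concretely, $q^{-1}(1)\times\C^*\to X\setminus X_0$, $(x,\lambda)\mapsto \lambda\cdot x$, is a $\mu_d$-quotient onto $X\setminus X_0$, and it intertwines $q$ with $\lambda\mapsto\lambda^d$. Consequently $\psi_q$ is computed by the fibre over $1$, and $\pi_*\psi_q(\pi^*F)\cong \pi_{1*}\pi_1^*F$. This uses the standard identification of pushforward of nearby cycles with cohomology of the global Milnor fibre for homogeneous functions; this is exactly what \cite{DP} provides. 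The underlying point is again conicity: both $\pi_*\psi_q$ and $\pi_{1*}\pi_1^*$ can be computed after restricting to a small ball bundle, by scaling.

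\emph{Step 3: assembly and the main obstacle.} Combining Steps 1 and 2 with the triangle from the first paragraph gives a triangle $\pi_{1*}\pi_1^*F\to F\to K_q(F)\to$. Rotating it, and using the paper's shift and sign normalization for $\varphi_q$, yields $K_q(F)\to F\to \pi_{1*}\pi_1^*F\to$. One must also check that the map $F\to\pi_{1*}\pi_1^*F$ is the adjunction unit, which follows by naturality of the specialization triangle.

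The main obstacle is Step 2: making the global identification of $\pi_*\psi_q$ with the Milnor fibre rigorous when $B$ is an arbitrary scheme and $q$ depends on $B$ arbitrarily. I would try first to invoke \cite{DP} directly; failing that, I would argue via the $\C^*$-equivariance, which makes $\pi_*$ commute with $\psi_q$ through a conic base change. The monodromy can be ignored for this statement.
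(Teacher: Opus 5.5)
Your strategy is the paper's: push the Milnor triangle for $\pi^*F$ forward, identify $\pi_{0*}\pi_0^*F\cong F$ by conic contraction, and identify the pushed-forward nearby cycles with $\pi_{1*}\pi_1^*F$. Your Step 1 is fine and is equivalent to the paper's use of $\pi_*j_!j^*\pi^*F=0$ from \cite[Prop.~6.2]{DP}.

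\textbf{Step 3 fails as written.} The triangle you start from is misoriented. The natural map goes $i_0^*G\to\Psi_qG$, not the other way. The correct triangle is $i_0^*G\to\Psi_qG\to\varphi_qG[1]\xrightarrow{+1}$, which is the paper's $\varphi_qG\to i_0^*G\to\Psi_qG\to$. From your triangle $\pi_{1*}\pi_1^*F\to F\to K_q(F)\to$, no rotation, shift or sign convention can produce $K_q(F)\to F\to\pi_{1*}\pi_1^*F\to$. Rotation only permutes the three vertices cyclically, and the two triangles have opposite cyclic orders: in yours $K_q(F)$ follows $F$, in the target it precedes it. In particular, the map $F\to\pi_{1*}\pi_1^*F$ that you want to compare with the adjunction unit does not occur in your triangle. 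If you start from the correct triangle, applying $\pi_{0*}$ gives $K_q(F)\to\pi_{0*}\pi_0^*F\to\pi_{0*}\Psi_q\pi^*F\to$ directly, and no rotation is needed.

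\textbf{Step 2 is only a heuristic as written.} The claim that $\Psi_q$ ``is computed by the fibre over $1$'' is a stalkwise Milnor-fibre statement. What you need is the global isomorphism $\pi_{0*}\Psi_q\pi^*F\cong\pi_{1*}\pi_1^*F$, and the $\mu_d$-quotient by itself does not compute $\Psi_q$. The paper proceeds in three moves:
\begin{enumerate}
\item It writes $\Psi_q=i_0^*\rho_*\rho^*$ with $\rho:\widetilde X=\{(x,s)\,|\,q(x)=\exp(s)\}\to X$.
\item It applies the Step 1 contraction again, now to the conic complex $\rho_*\rho^*\pi^*F$, to get $\pi_{0*}\Psi_q\pi^*F\cong\pi_*\rho_*\rho^*\pi^*F$.
\item It cites \cite[Prop.~6.4]{DP} for $\pi_*\rho_*\rho^*\pi^*F\cong\pi_{1*}\pi_1^*F$.
\end{enumerate}
Your trivialization idea proves this last step outright if you use the universal cover instead of the $\mu_d$-cover. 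The map $q^{-1}(1)\times\C\to\widetilde X$, $(y,\tau)\mapsto(\exp(\tau/d)\cdot y,\tau)$, is an isomorphism, with inverse $(x,s)\mapsto(\exp(-s/d)\cdot x,s)$. Under it, $\pi\circ\rho$ becomes $\pi_1$ composed with the projection $q^{-1}(1)\times\C\to q^{-1}(1)$. That projection has contractible fibres, so $\pi_*\rho_*\rho^*\pi^*F\cong\pi_{1*}\pi_1^*F$.
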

\begin{proof}
Denote $\pi_0: X_0:=q^{-1}(0)\to B$ to be the projection with inclusion $i=i_0: X_0\to X$. We have the Milnor triangle: 
$$\varphi_q\to i^*\to \Psi_q\,,$$
where $\Psi_q$ is the nearby cycle functor. Applying $\pi_{0*}$, we obtain 
$$K_q(F)=\pi_{0*}\varphi_q(\pi^*F)\to \pi_{0*}\pi_{0}^*F\to \pi_{0*}\Psi_q(\pi^*F) . $$
Then we are left to show 
\begin{equation}\label{equ to show}\pi_{0*}\pi_{0}^*F\cong F, \quad 
\pi_{0*}\Psi_q(\pi^*F)\cong \pi_{1*}\pi_{1}^*F. \end{equation}
There is a $\C^*$-action on $X=B\times E$ scaling the $E$-direction, i.e. $(b,e)\mapsto (b,t\cdot e)$. 
Denote the open inclusion $$j: X\setminus X_0\to X. $$ 
We know $j^*\pi^*F$ is locally constant on the above $\C^*$-orbits, 
therefore by \cite[Prop.~6.2]{DP}, we have 
$$\pi_*j_!(j^*\pi^*F)=0. $$
Using the distinguished triangle 
$$j_!j^*\to \id \to i_*i^*, $$
we obtain 
\begin{equation*}F\cong \pi_*\pi^*F\cong \pi_*i_*i^*\pi^*F\cong \pi_{0*}\pi_{0}^*F. \end{equation*}
This proves the first isomorphism in \eqref{equ to show}. To show the second, we denote 
$$\rho: \widetilde{X}:=\{(b,e,s)\in X\times \C\,|\,q(b,e)=e^s \}\to X. $$
By the definition of nearby cycle functor, 
$$\Psi_q=i_0^*\rho_*\rho^*. $$
Therefore 
\begin{equation}\label{equ on Fpiqq}\pi_{0*}\Psi_q(\pi^*F)\cong \pi_{0*}i_0^*\rho_*\rho^*\pi^*F\cong \pi_{*}i_{0*}i_0^*\rho_*\rho^*\pi^*F\cong \pi_{*}\rho_*\rho^*\pi^*F. \end{equation}
By \cite[Prop.~6.4]{DP}, we have 
$$\pi_{*}\rho_*\rho^*\pi^*F\cong \pi_{*}i_{1*}i_1^*\pi^*F\cong \pi_{1*}\pi_1^*F. $$
Combining with \eqref{equ on Fpiqq}, we are done.
\end{proof}

\begin{Proposition}\label{base change of van cycle}
For any locally closed subset $i: B'\hookrightarrow B$, and $F\in \mathrm{D}_c^b(B)$, we have 
$$i^!K_q(F)\cong K_{q|_{B'\times E}}(i^!F). $$
\end{Proposition}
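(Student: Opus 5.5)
We derive the base change from the distinguished triangle of Lemma \ref{lem on dist triang}, using that both of its other terms commute with $i^!$. Write $q':=q|_{B'\times E}$, let $\pi': B'\times E\to B'$ be the projection, and let $\pi'_1: q'^{-1}(1)\to B'$ be its restriction. The key observation is that
$$q'^{-1}(1)=q^{-1}(1)\times_B B',$$
so we have a Cartesian square
\begin{align*}
\xymatrix{
q'^{-1}(1) \ar[d]_{\pi'_1} \ar[r]^-{i_1} \ar@{}[dr]|{\Box} & q^{-1}(1) \ar[d]^{\pi_1} \\
B' \ar[r]^{i} & B.
}
\end{align*}

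First, apply $i^!$ to the triangle $K_q(F)\to F\to \pi_{1*}\pi_1^*F\to$ of Lemma \ref{lem on dist triang}. Base change $i^!\pi_{1*}\cong \pi'_{1*}i_1^!$ holds for any Cartesian square with $i$ locally closed. Hence we get a triangle
$$i^!K_q(F)\to i^!F\to \pi'_{1*}i_1^!\pi_1^*F\to.$$

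Next, compare $i_1^!\pi_1^*F$ with $\pi_1'^*i^!F$. This is not automatic, since $\pi_1$ is not smooth in general. I would argue as follows. The map $q^{-1}(1)\to B$ factors through $B\times E\to B$, which is smooth, so $(i\times\id_E)^!\pi^*\cong\pi'^*i^!$. It then remains to commute $i^!$ with the restriction to the closed subset $q^{-1}(1)\subset B\times E$. Rather than doing this directly, I would re-run the proof of Lemma \ref{lem on dist triang} using the $\C^*$-scaling. There, $\pi_{1*}\pi_1^*F$ was identified with $\pi_*\rho_*\rho^*\pi^*F$, where $\rho$ is smooth onto its image; for smooth maps, $!$-restriction along $i$ commutes with pullback. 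Likewise $\pi_{0*}\pi_0^*F\cong F$ and $\pi_*\pi^*F$ behave well. So I would identify the third term as $\pi'_{1*}\pi_1'^*(i^!F)$ via the chain of isomorphisms from that proof, each of which is compatible with $i^!$ by smooth base change.

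Alternatively, and more cleanly, start from the definition. Because $\pi^*$ commutes with $i^!$ along the smooth projection, $(i\times\id)^!\pi^*F\cong\pi'^*i^!F$. Vanishing cycles commute with $!$-restriction up to the usual comparison map, which is an isomorphism after proper pushforward. Here $\pi$ is not proper, but \cite[Prop.~6.2, 6.4]{DP} (the homogeneity/monodromic argument) replaces properness.

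Finally, check that the resulting map $i^!K_q(F)\to K_{q'}(i^!F)$ is compatible with the triangles. Concretely, construct the natural base change morphism
$$i^!\pi_*\varphi_q\pi^*F\to \pi'_*(i\times\id)^!\varphi_q\pi^*F\to\pi'_*\varphi_{q'}(i\times\id)^!\pi^*F$$
and verify that it maps the triangle for $q$ to the triangle for $q'$. It is an isomorphism on the middle and third terms, so it is an isomorphism on the first term by the five lemma.

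The main obstacle is the third term: justifying $i_1^!\pi_1^*F\cong\pi_1'^*i^!F$ on the possibly singular fiber $q^{-1}(1)$, and making the triangle maps natural. I would first try the $\C^*$-contraction argument of \cite{DP} to replace non-smooth restrictions by ones along smooth maps.
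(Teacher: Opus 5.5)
Your skeleton matches the paper's. You apply $i^!$ to the triangle $K_q(F)\to F\to\pi_{1*}\pi_1^*F\to$ of Lemma~\ref{lem on dist triang}, use $i^!\pi_{1*}\cong\pi'_{1*}i_1^!$ for the Cartesian square, identify the third terms, and compare triangles.

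The gap is the step you flag as the main obstacle, and it comes from a false assertion. You say $\pi_1\colon q^{-1}(1)\to B$ is not smooth in general. Under the homogeneity hypothesis it always is, and this is the one idea the proof needs.

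\begin{itemize}
\item The Euler identity $\sum_j e_j\,\partial q/\partial e_j=d\cdot q$ shows that at every point of $q^{-1}(1)$ some fibrewise derivative $\partial q/\partial e_j$ is nonzero, since $d\neq 0$.
\item So $q^{-1}(1)$ is cut out of the smooth $B$-scheme $B\times E$ by one equation whose relative differential over $B$ never vanishes.
\item By the relative Jacobian criterion, $\pi_1$ is smooth of relative dimension $\dim E-1$, however singular $B$ is.
\item Hence $\pi_1^*$ and $\pi_1'^*$ differ from $\pi_1^!$ and $\pi_1'^!$ by the same shift and twist, and $i_1^!\pi_1^*F\cong\pi_1'^*i^!F$ follows from $i_1^!\pi_1^!\cong\pi_1'^!i^!$.
\end{itemize}
Together with base change, this is the whole proof in the paper. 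The scaling argument of \cite{DP} is used only inside Lemma~\ref{lem on dist triang}, not in this step.

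Without this observation, your proposal does not prove $i^!\pi_{1*}\pi_1^*F\cong\pi'_{1*}\pi_1'^*i^!F$; it only lists routes you would try.

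\textbf{The detour through Lemma~\ref{lem on dist triang}.} This could probably be completed. The map $\pi\circ\rho$ is a covering onto the open set $X\setminus X_0$ followed by the projection $B\times E\to B$, so $i^!$ does pass through $\pi_*\rho_*\rho^*\pi^*F$. But you would still need \cite[Prop.~6.4]{DP} for both $q$ and $q'$. You would also need to check that all these identifications are compatible with the maps in the two triangles. None of this is carried out.

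\textbf{Your ``cleaner'' alternative.} This rests on a principle that is false in general: vanishing cycles do not commute with $!$-restriction to a closed subset, even after pushforward. For $k\colon\{x=0\}\hookrightarrow\C^2$, the complexes $\varphi_{y^2}k^!\Q_{\C^2}$ and $k^!\varphi_{x^2+y^2}\Q_{\C^2}$ are skyscrapers at the origin in different cohomological degrees. That the commutation does hold for $K_q$ is exactly the content of the proposition, and what makes it hold is the smoothness of $\pi_1$.

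A minor point: the natural comparison map goes $\varphi_{q'}(i\times\id)^!\to(i\times\id)^!\varphi_q$, which is the opposite direction to the one you wrote.
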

\begin{proof}
Taking derivative in $e\in E$, we have $\sum_{j}e_j\frac{\partial(q(b,e))}{\partial e_j}=dq(b,e)$, which is nonzero on $q^{-1}(1)$. 
Therefore $\pi_1: q^{-1}(1)\to B$ is smooth. By base change in the following Cartesian diagram 
\begin{align*}
\xymatrix{
q^{-1}(1)|_{B'} \ar[d]_{\pi'_1} \ar[r]^-{i_1} \ar@{}[dr]|{\Box} & q^{-1}(1) \ar[d]^{\pi_1} \\
B' \ar[r]^{i} &  B\,
}
\end{align*}
we obtain 
\begin{equation}\label{equ on ipi}i^!\pi_{1*}\pi_1^*F\cong \pi'_{1*}i_1^!\pi_1^*F\cong \pi'_{1*}\pi'^*_{1}i^! F. \end{equation}
By Lemma \ref{lem on dist triang}, we obtain 
$$i^!K_q(F)\to i^! F \to i^!\pi_{1*}\pi_{1}^*F, \quad K_{q|_{B'\times E}}(i^!F)\to i^! F \to \pi'_{1*}\pi'^*_{1}i^! F.$$
Comparing these two triangles and using \eqref{equ on ipi}, we are done. 
\end{proof}

\subsection{Injectivity of specialization maps after localization in $\C^*_{t}$ in $D$-type}

\begin{Theorem}\label{prop on inj polyhedral}
The specialization map 
$$\mathsf{sp}: H^{\sT\times \sG}(R,\sw) \to  H^{\sT\times \sG}(R)$$
is injective after localization with respect to the $\C^*_{t}$-action.
\end{Theorem}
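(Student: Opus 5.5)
Here $\sw=\langle p,s(e)\rangle$ is linear in $p\in P$ and $s$ is quadratic in $e\in E$. So dimensional reduction (\cite[Cor.~A.9]{Dav1}) applies in the $P$-direction, as in Lemma \ref{lem on dim red}. It identifies $H^{\sT\times\sG}(R,\sw)\cong H^{\sT\times\sG}(Z(s)\times P)$, where $Z(s)\subset E$ is the zero locus of $s$. Under this identification $\mathsf{sp}$ becomes the pushforward
$$H^{\sT\times \sG}(Z(s))\to H^{\sT\times \sG}(E).$$
The plan is to prove this pushforward injective after inverting $t$. I would follow the template of Proposition \ref{prop on inj conifold}, with the roles of base and fiber swapped: now $P$ is the fiber direction and carries weight $2$, and $s$ is no longer linear. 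This is why the base change Proposition \ref{base change of van cycle} was prepared. Instead of dimensional reduction on strata, I would work directly with $K_q$ for $q=\sw$. This $q$ is homogeneous of degree $1$ in the $P$ coordinates over the base $B=E$, and $H(R,\sw)=H(E,K_q(\omega_E))$ up to shift.

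\textbf{Step 1 (localization to a nilpotent locus).} Every $\sG$-invariant function on $E$ is a trace of a cycle in the $C_-$-arrows. Such a function has $\C^*_t$-weight $-(\text{length})\neq 0$. The argument of Lemma \ref{lem on nilp} then shows that after inverting $t$ both sides localize onto $E_{\mathrm{nil}}$, the locus where all such cycles have vanishing trace. This is the $\sG$-nullcone of $E$, and it is $\sT$-stable.

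\textbf{Step 2 (stratification).} Stratify $E_{\mathrm{nil}}$ into finitely many $\sT\times\sG$-stable locally closed pieces $S_a$. I would use Hesselink/Kirwan strata of the nullcone, or $\sG$-orbits where these are finite. Proposition \ref{base change of van cycle} lets one compute $i_a^!K_q$ stratum by stratum, since on $S_a\times P$ the restricted function is still linear in $p$. Hence on each stratum the problem reduces again, by dimensional reduction, to the pushforward $H(Z(s)\cap S_a)\to H(S_a)$. If the stratum pieces have cohomology concentrated in even degrees (e.g.\ each $S_a$ is a vector bundle over an orbit or a $\sG$-homogeneous space, giving a stabilizer quotient as in \eqref{equ on va of odd coho conifolds}), then the excision long exact sequences split into short exact sequences. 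Injectivity then follows by induction on strata via the five lemma, exactly as in the proof of Proposition \ref{prop on inj conifold}.

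\textbf{Step 3 (the stratum computation, the main obstacle).} On a single stratum, $s$ is quadratic rather than linear. So $Z(s)\cap S_a$ is not obviously a subbundle, and the Euler class trick $j^*j_*=e(N)$ does not apply directly. My first attempt is to choose the strata to be $\sG$-orbits in $E_{\mathrm{nil}}$. This is plausible because the $C_-$ arrows form, up to the loops, a $D$/$A$-type doubled tree or cycle quiver with few nilpotent orbits, as in the graded Jordan chain argument. On an orbit $s$ is either identically zero or nowhere zero, so $Z(s)\cap\mathcal O_a$ is a union of orbits and the restriction is an inclusion of a union of strata. Injectivity would then reduce to showing that the pushforward of each orbit class into $H(E)[t^{-1}]$ is independent. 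I would check this via localization: restricting to the $\sT\times\sG$-fixed loci of cocharacters canceling $t$ produces nonzero Euler classes, and $P$ contributes weight $2t$, which is invertible. If orbits are infinite in number, the fallback is the Hesselink stratification. There each stratum is a bundle $\sG\times_{P_\lambda}E_\lambda^{+}$, and one checks on the attracting linear space $E_\lambda^+$ that $s$ restricted to it becomes linear after a parabolic reduction, recovering the constant-rank kernel argument with Euler class leading term $(\pm t)^{\rk}$.
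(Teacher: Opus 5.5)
There is a genuine gap in Steps 2--3. Your reformulation is fine: dimensional reduction in $P$ turns $\mathsf{sp}$ into $H^{\sT\times\sG}(Z(s))\to H^{\sT\times\sG}(E)$, and Step 1 localizes onto the nullcone $E_{\mathrm{nil}}$. But everything after that needs a finite stratification of $E_{\mathrm{nil}}$ and of $Z(s)\cap E_{\mathrm{nil}}$ into pieces with even cohomology on which $s$ is linear, and your argument does not supply one.

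The $C_-$ arrows form a \emph{doubled} tree, and its nilpotent representations have moduli even inside $Z(s)$. The graded Jordan chain argument only applies to oriented $2$-cycles and loops. For example, take $n=4$ ($m=2$), $\bv_1=2$, and $\bv=1$ at the four neighbours $0,0',2,2'$ of vertex $1$. Fix a basis $e,f$ of $V_1$. Set each of $C,D_0,C',B'$ equal to $e^*$, and set $c,d_0,c',b'$ equal to $\kappa_0f,\kappa_{0'}f,\kappa_2f,\kappa_{2'}f$. Then:
\begin{itemize}
\item $Cd_0=D_0c=C'b'=B'c'=0$;
\item all traces of cycles vanish, so the point lies in $E_{\mathrm{nil}}$;
\item the vertex-$1$ relation reduces to the single linear condition $\kappa_0+\kappa_{0'}=\kappa_2+\kappa_{2'}$.
\end{itemize}
The tuple of composites $V_1\to V_\ell\to V_1$, namely $(\kappa_\ell\, f\otimes e^*)_\ell$, is an orbit invariant up to simultaneous conjugation. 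Conjugation can only rescale $(\kappa_\ell)$ by a common factor, so you get a $\mathbb{P}^2$-family of distinct orbits in $Z(s)\cap E_{\mathrm{nil}}$.

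So the orbit version fails, and the Hesselink fallback does not rescue it. A stratum $\sG\times_{P_\lambda}E^{+,ss}_\lambda$ has the equivariant cohomology of a Levi-semistable locus, which need not be even, so the excision sequences need not split. Also, a $\sG$-equivariant quadratic map restricted to a $P_\lambda$-stable linear subspace is still quadratic. Nothing makes $Z(s)\cap S_\lambda$ a subbundle, so there is no Euler class $(\pm t)^{\rk}+\cdots$ to invoke.

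The missing idea is to swap base and fiber: keep $P$ as the base and $E$ as the fiber. This is exactly what Proposition~\ref{base change of van cycle} was prepared for, with $B=P$ and $d=2$; with $B=E$ and $d=1$ you would not need it, since dimensional reduction already handles that direction. The argument then runs as follows.
\begin{itemize}
\item The $C_+$ arrows form only oriented $2$-cycles and single loops. So $P_{\mathrm{nil}}$ has finitely many $\sG$-orbits (graded Jordan chains), with stabilizers $U_p\rtimes(L\times\widetilde\sT)$ having even cohomology.
\item Over an orbit point $p$, the cocharacter $g(t)\in\sG$ cancelling the weight $2$ on $p$ gives $\widetilde\sT\subset\mathrm{stab}(p)$. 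It acts on $E$ with weights $2(k-l)-1$, all odd and nonzero.
\item Hence the quadratic form $\sw_p$ on $E$ only pairs $E_+$ with $E_-$ and is linear in $e_+$.
\item Dimensional reduction turns the orbit specialization into $\iota_*:H(\ker s_p)\to H(E_-)$. Since $\iota^*\iota_*=e(N_p)=c\,t^{r}+\cdots$ with $c\neq0$, $\iota_*$ is injective.
\item This injectivity then propagates through the excision sequences, which split because the orbit terms are even.
\end{itemize}
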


\begin{proof}

The strategy is similar as \S \ref{sect on inj sp for gene conifolds}.
Consider the following nilpotent locus:
$$P_{\mathrm{nil}}:=\left\{p=(p_{a})_{a \in C_+}\in \bigoplus_{a\in C_+}\Hom(\bC^{\mathbf v_{t(a)}},\bC^{\mathbf v_{h(a)}})\,\Big|\,p_{\Sigma}:=\bigoplus_{a \in C_+} p_{a}\,\, \mathrm{is}\,\, \mathrm{nilpotent} \right\}. $$ 
By the argument of Lemma \ref{lem on nilp}, where the excision for vanishing cycle cohomology can be applied because of the base change property 
(Proposition \ref{base change of van cycle} applied to $B=P$ and $d=2$), 
one then concludes the commutative diagram 
\begin{align}\label{diag on red to nilp}
\xymatrix{
H^{\sT\times \sG}(P_{\mathrm{nil}}\times E,\sw)[t^{-1}] \ar[d]_{\cong} \ar[r]^-{\mathsf{sp}}   & H^{\sT\times \sG}(P_{\mathrm{nil}}\times E)[t^{-1}] \ar[d]^{\cong} \\
H^{\sT\times \sG}(R,\sw)[t^{-1}] \ar[r]^{\mathsf{sp}} &  H^{\sT\times \sG}(R)[t^{-1}].
}
\end{align}
Write $Q_+$ for the quiver with vertex set $Q_0$ and arrow set $C_+$.
Its connected components are oriented two-cycles, and vertices with a
single loop. Its nilpotent representations are direct sums of graded
Jordan chains. In particular, there are finitely many $\sG$-orbits.
We stratify $P_{\mathrm{nil}}$ by finitely many $\sG$-orbits $\mathcal{O}_a$'s:
$$\emptyset=P_0\subset P_1\subset \cdots \subset P_r=P_{\mathrm{nil}}, \,\,\, \mathrm{with} \,\,P_a\setminus P_{a-1}=\mathcal{O}_a. $$
Note that $\sT$ preserves $\sG$-orbits. In fact, any nilpotent representation $p\in P$ (say $p\in \mathcal{O}_a$) may be written as 
\[
M_p=\bigoplus_{\lambda\in\Lambda}
M_\lambda\otimes\mathbb C^{m_\lambda},
\]
where $M_\lambda$'s are pairwise nonisomorphic indecomposable graded
Jordan chains. On each $M_\lambda$, choose a vertex-homogeneous basis
\[
e_{\lambda,0}\xrightarrow{p}e_{\lambda,1}
\xrightarrow{p}\cdots\xrightarrow{p}
e_{\lambda,\ell_\lambda-1}\rightarrow0.
\]
Define a $\C^*$-action by 
\[
 g(t)(e_{\lambda,k}\otimes v)=t^{-2k}(e_{\lambda,k}\otimes v).
\]
This preserves every vertex space and hence lies in the gauge group $\sG$. A direct calculation shows 
$$g(t)(t^2p)g(t)^{-1}e_{\lambda,k}=e_{\lambda,k+1}, \,\,\, \mathrm{i.e.}\,\,\, g(t)(t^2p)g(t)^{-1}=p. $$
For any $p'=h^{-1}ph \in\mathcal{O}_a$, then 
$$t^2p'=h^{-1}(t^2p) h=h^{-1}(g(t)^{-1}pg(t))h \in\mathcal{O}_a. $$
There is a short exact sequence of the stabilizer groups: 
$$1\to \mathrm{stab}_{\sG}(p)\to \mathrm{stab}_{\sG \times \sT}(p)\to \sT \to 1.$$
We may define a lift $\sT\to \mathrm{stab}_{\sG \times \sT}(p)$, $t\mapsto (g(t),t)$ with image $\widetilde{\sT}$, 
then $\mathrm{stab}_{\sG \times \sT}(p)=\mathrm{stab}_{\sG}(p)\rtimes \widetilde{\sT}$. 

Next we choose a Levi subgroup compatible with this lift. Consider the endomorphism algebra of $M_p$ and its Jacobson radical: 
\[
A_p=\operatorname{End}_{Q_+}(M_p),\quad
J_p=\operatorname{rad}(A_p).
\]
The indecomposable decomposition gives
\[
A_p/J_p\cong
\prod_{\lambda\in\Lambda}\operatorname{End}(\mathbb C^{m_\lambda}).
\]
Consequently,
\[
\mathrm{stab}_{\sG}(p)=U_p\rtimes L,\,\,\,\mathrm{where} \,\,\,
U_p=1+J_p,\,\,\,
L=\prod_{\lambda\in\Lambda}\operatorname{GL}(m_\lambda),
\]
where $L$ acts on the multiplicity spaces $\C^{m_\lambda}$. The group $U_p$ is
unipotent. By construction, $L$ commutes with $g(\mathbb C^*)$.
Moreover, conjugation by $g$ preserves $A_p$ and its radical.
It follows that
\[
\mathrm{stab}_{\sG \times \sT}(p)=U_p\rtimes (L\times\widetilde \sT).
\]
In particular, $L\times\widetilde \sT$ is a Levi subgroup of $\mathrm{stab}_{\sG \times \sT}(p)$.

There is an identification
\begin{equation}\label{equ on two stack quot poly}
[(\mathcal O_a\times E)/(\sG\times\sT)]\cong[E/\mathrm{stab}_{\sG \times \sT}(p)],
\end{equation}
which implies  
$$H_{}^{\sT\times \sG}(\mathcal{O}_{a}\times E,\sw)\cong H_{}^{\mathrm{stab}_{\sG \times \sT}(p)}(E,\sw|_{\{p\}\times E}). $$
Since $\mathrm{stab}_{\sG \times \sT}(p)/(L\times\widetilde \sT)\cong U_p$ is contractible, put $\sw_p(e)=\sw|_{\{p\}\times E}$, we have 
\[
H^{\mathrm{stab}_{\sG \times \sT}(p)}(E,\sw_p)\xrightarrow{\sim}H^{(L\times\widetilde \sT)}(E,\sw_p), \quad
H^{\mathrm{stab}_{\sG \times \sT}(p)}(E)\xrightarrow{\sim}H^{(L\times\widetilde \sT)}(E).
\]
These identifications commute with specialization.
We may therefore compute the orbit specialization map using $L\times\widetilde \sT$.
For an arrow in $C_-$, the component sending a vector in chain
position $k$ to one in chain position $l$ has $\widetilde \sT$-weight $2(k-l)-1$.
Thus all $\widetilde \sT$-weights of $E$ are odd and nonzero. Write
\[
E=E_+\oplus E_-
\]
for the positive and negative weight spaces. Since $\widetilde \sT$
is central in $L\times\widetilde \sT$, both summands are $(L\times\widetilde \sT)$-subrepresentations.

The function $\sw_p$ is an $(L\times\widetilde \sT)$-invariant quadratic form.
Invariance under $\widetilde \sT$ implies that it pairs only opposite
weights. Hence
\[
\sw_p(e_+,e_-)=\langle e_+,s_p(e_-)\rangle
\]
for an $(L\times\widetilde \sT)$-equivariant linear map
$s_p:E_-\to E_+^\vee$. Let $\iota:\ker(s_p)\hookrightarrow E_-$ be the inclusion, with $N_p=E_-/\ker(s_p)$. 
Dimensional reduction and its compatibility
with specialization as in Lemma~\ref{lem on dim red}, identifies
the specialization map on $(E,\sw_p)$ with
\[
\iota_*:H^{L\times\widetilde \sT}(\ker(s_p))\rightarrow H^{L\times\widetilde \sT}(E_-).
\]
As $\ker(s_p)$ is a
vector space, the dimensional reduction shift is even, and
$H^*(B(L\times\widetilde \sT))=H^*(BL)[t]$
is concentrated in even degrees, therefore $H^{L\times\widetilde \sT}(E,\sw_p)$ is concentrated in even degrees. 
The same holds for
$H^{L\times\widetilde \sT}(E)$.

Decompose $N_p$ according to the $\widetilde \sT$-weights:
\[
N_p=\bigoplus_{\nu<0}N_{p,\nu},\quad
r_\nu=\dim N_{p,\nu},\quad r=\dim N_p,
\]
then
$$\iota^*\iota_*=\cdot e_{L\times\widetilde \sT}(N_p)=c\cdot t^{r}+\mathrm{l.o.t.}, \quad c=\prod_{\nu<0}\nu^{r_\nu}\in\mathbb Q^*.$$
Here $t$ is the original flavor parameter, since
$\widetilde \sT\to\sT$ is an isomorphism and $L$ maps trivially to
$\sT$. A polynomial with invertible leading coefficient acts
injectively on $H^*(BL,\mathbb Q)[t]$. Since
$H^{L\times\widetilde \sT}(\ker(s_p))$ is a rank-one free module over this ring, $\iota_*$ is injective, and remains so after inverting $t$.
Thus specialization is injective on every orbit.

Finally, Proposition~\ref{base change of van cycle} gives the
excision sequences for the filtration on $P_a\times E$, compatible
with specialization. The vanishing of odd cohomology on each orbit
implies inductively that the ordinary and critical excision
sequences split into short exact sequences in even degrees.
After localization at $t$, injectivity on the orbit terms and
induction on $a$ give injectivity on $P_{\mathrm{nil}}\times E$.
The localization reduction \eqref{diag on red to nilp} then proves the theorem.
\end{proof}

For the $D_4$ case, the flavor  group is different from other $D_{2n}$ cases, see Remark \ref{rmk on D4}. The argument above applies to this case. 
\begin{Theorem}\label{thm on inj D4}
In the $D_4$ case, the specialization map 
$$\mathsf{sp}: H^{\sT\times \sG}(R,\sw) \to  H^{\sT\times \sG}(R)$$
is injective after localization with respect to the $\C^*_{t_z}$-action.
\end{Theorem}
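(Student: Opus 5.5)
We follow the proof of Theorem~\ref{prop on inj polyhedral}, with the decomposition $R=X\oplus Y\oplus Z$ of Remark~\ref{rmk on D4} replacing $P\oplus E$. The role of $P$ is played by $Z$. The potential $\sw=\tr(x(zy-yz))$ is linear in $z$ for fixed $(x,y)$, and it is homogeneous of degree $2$ in the $E:=X\oplus Y$ coordinates for fixed $z$. So we apply Proposition~\ref{base change of van cycle} with $B=Z$, the fibre $E=X\oplus Y$, and $d=2$. This gives excision for critical cohomology along any locally closed $\sG\times\sT$-stable stratification of $Z$, compatibly with $\mathsf{sp}$.

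The first step is localization to the nilpotent locus. For $z\in Z$, set $z_\Sigma=\bigoplus_{a\in C_z}z_a$ and $q_k(z)=\tr(z_\Sigma^k)$. Each $q_k$ is $\sG$-invariant and has $\C^*_{t_z}$-weight $k\neq 0$. Here we use the one-parameter subgroup $t\mapsto (t^{-1},1,t)\in\sT$ (or any cocharacter pairing nontrivially with $t_z$), and localize in its equivariant parameter $t_z$. The argument of Lemma~\ref{lem on nilp} then reduces the statement to $Z_{\mathrm{nil}}\times E$, both for ordinary and for critical cohomology.

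The second step is the orbit stratification. The quiver $Q_z$ with arrows $C_z=\{a,A,f,F\}$ is a disjoint union of two oriented two-cycles, $0\rightleftarrows 0'$ and $1\rightleftarrows 1'$. Its nilpotent representations are therefore direct sums of graded Jordan chains, and there are finitely many $\sG$-orbits $\mathcal O_a$. For $z\in\mathcal O_a$ we take the grading cocharacter $g(t)e_{\lambda,k}=t^{-k}e_{\lambda,k}$. Then $\widetilde\sT=\{(g(t_z),\cdot)\}$ lifts the $t_z$-direction, and together with the residual $\C^*$ (acting on $x$ and $y$ with opposite weights) it lies in $\mathrm{stab}_{\sG\times\sT}(z)$. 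We obtain $\mathrm{stab}_{\sG\times\sT}(z)=U_z\rtimes(L\times\widetilde\sT')$, where $\widetilde\sT'\cong\sT$, $L=\prod\GL(m_\lambda)$, and $U_z$ is unipotent. We then replace the stabilizer by its Levi subgroup, exactly as before.

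The third step is the key one and the main obstacle: the quadratic form $\sw_z(x,y)=\tr(x[z,y])$ on $E$ must be split into a pairing between two complementary subrepresentations. In the $D_{2n}$ proof this came from the oddness of all $\widetilde\sT$-weights of $E$. Here we use the weights of the $t_z$-lift on the components of $x$ and $y$ between chain positions $k$ and $l$, which include the $\sT$-weight, e.g.\ $\tfrac12\cdot$(weight) $+(k-l)$. The natural choice is to take $E_+:=X$ and $E_-:=Y$. Then $\sw_z=\langle x,s_z(y)\rangle$ with $s_z(y)=[z,y]$ linear and equivariant for $L\times\widetilde\sT'$. Dimensional reduction (Lemma~\ref{lem on dim red}) identifies the orbit specialization map with $\iota_*:H(\ker s_z)\to H(Y)$. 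We then get $\iota^*\iota_*=e(Y/\ker s_z)$. To conclude, we must check that every weight of $N_z=Y/\ker s_z$ has nonzero $t_z$-component after restriction to $\widetilde\sT'$. Then $e(N_z)$ is a polynomial in $t_z$ with nonzero leading coefficient over $H(BL)[t_y]$ (with $t_x=(t_yt_z)^{-1}$), so it acts injectively after inverting $t_z$. The point to verify is that the $t_z$-part of the weight of each $y$-component is $t_y$ shifted by an integer multiple of $t_z$, coming from $g$. Its coefficient on $t_y$ is $1\neq 0$, so these weights are nonzero and the top $t_y$-coefficient of $e(N_z)$ is a unit; this gives injectivity even before localization, orbitwise. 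If instead only $t_z$-leading terms are available, one uses leading coefficients in $t_z$. As in the earlier proof, odd cohomology vanishes on each orbit, because $H(B(L\times\widetilde\sT'))$ is even and the dimensional reduction shifts are even.

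Finally, even-degree concentration makes both excision sequences split into short exact sequences compatible with $\mathsf{sp}$. Induction over the filtration $P_a$ then gives injectivity on $Z_{\mathrm{nil}}\times E$ after inverting $t_z$, and the localization reduction of the first step concludes the proof.
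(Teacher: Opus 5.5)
Your proof is correct. Its skeleton is the paper's: Proposition \ref{base change of van cycle} with $B=Z$ and $d=2$, reduction to the nilpotent locus of $Z$ after inverting $t_z$, the orbit stratification, the Levi reduction, and the split excision sequences. The key orbitwise step, however, is genuinely different.

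The paper pulls back along the isogeny $\C^*_s\times\C^*_t\to\sT$, $(s,t)\mapsto(st^{-1},s^{-1}t^{-1},t^2)$. Then $Z$ has $t$-weight $2$ and $X,Y$ have $t$-weight $-1$. With $g(t)e_k=t^{-2k}e_k$, every $\widetilde\sT$-weight on $E=X\oplus Y$ is odd. $E$ is split by the sign of these weights, and $e(N_p)=c\,t^r+\cdots$ with $c\in\Q^*$, while the $s$-torus merely commutes along. So the paper runs the $D_{2n}$ proof verbatim, using a single flavor direction.

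You instead split $E=X\oplus Y$ using that $\sw=\tr(x[z,y])$ is linear in $x$. You then let the residual torus $\{(\tau^{-1},\tau,1)\}$ do the work; this is the paper's $s$-torus up to inversion. It fixes $Z$ and acts on $Y$ by the scalar weight $1$, so $e(N_z)$ is monic in $t_y$ over $H^*(BL)[t_z]$. This is exactly the generalized conifold argument of Proposition \ref{prop on inj conifold}, with $t_y\leftrightarrow h$ and $t_z\leftrightarrow\epsilon$. With your splitting you could in fact dimensionally reduce globally along $X$, as in Lemma \ref{lem on dim red}, and dispense with Proposition \ref{base change of van cycle}: only ordinary Borel--Moore excision on $Y\oplus Z$ would be needed.

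What the paper's route buys is uniformity. It needs no second flavor parameter, and none is available for $D_{2n}$ with $n>2$ or in the type $C$ case.

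One correction. The requirement you state midway, that every weight of $N_z$ have nonzero $t_z$-component, is false. Take $(\bv_0,\bv_{0'},\bv_1,\bv_{1'})=(1,0,1,1)$ and $z$ with $F=1$, $f=0$. Then $\sw_z=Bc$, and $N_z=\C_c$ has $\widetilde\sT'$-weight $t_y$ plus an $L$-weight, with no $t_z$-component. The requirement is not needed, though. The $t_y$-monicity you end with carries the argument, and it even gives orbitwise injectivity before localization.
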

\begin{proof}
Consider the finite isogeny
\[
\mathbb C_s^*\times\mathbb C_t^*\longrightarrow \sT, \quad
(s,t)\longmapsto
(st^{-1},s^{-1}t^{-1},t^2).
\]
Then the $t$-weights of arrows from $C_x, C_y, C_z$ satisfy: 
\[
\operatorname{wt}_t(C_z)=2,\quad
\operatorname{wt}_t(C_x)=\operatorname{wt}_t(C_y)=-1.
\]The $s$-torus acts trivially on $Z$. Thus the proof of Theorem \ref{prop on inj polyhedral} applies with this additional commuting torus, and inverting the Chern class of $t_z$ is equivalent to inverting
 $t$, since that class pulls back to $2t$.
\end{proof}

As in Theorem \ref{thm on sp of framed gene conifolds}, we have 
\begin{Theorem}\label{thm on framed inn polyhedral}
The injectivity of (localized) specialization map also holds for moduli stacks of (stable) framed representations with function pullback from the stack of unframed representations. 
\end{Theorem}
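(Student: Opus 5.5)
The argument follows the proof of Theorem \ref{thm on sp of framed gene conifolds}: first pass from unframed to framed stacks by pullback along the framing projection, then pass to stable framed representations using Proposition \ref{prop on nonabel stab and sp}. The ingredients are Theorems \ref{prop on inj polyhedral} and \ref{thm on inj D4}, which give localized injectivity on the unframed stacks. The one new point is to keep track of the localization throughout. All maps involved are $H_\sT(\pt)$-linear, and localization at $t$ (or $t_z$ in the $D_4$ case) is exact, so every step commutes with inverting the flavor parameter.

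\textbf{Framed stacks.} Let $p\colon R(\bv,\underline{\bd})\to R(\bv)$ be the projection. It is a $(\sT\times\sG)$-equivariant trivial vector bundle, and the function on the source is $\sw=p^*\sw$. Smooth pullback commutes with vanishing cycles, $p^*\varphi_\sw\cong\varphi_{\sw\circ p}p^*$. Homotopy invariance for vector bundles then makes $p^*$ an isomorphism on both critical and ordinary equivariant cohomology. Since specialization is compatible with l.c.i. pullback, we obtain the same commutative square as in the proof of Theorem \ref{thm on sp of framed gene conifolds}, with both vertical arrows isomorphisms. After inverting $t$, the bottom row is injective by Theorem \ref{prop on inj polyhedral}, or by Theorem \ref{thm on inj D4} in the $D_4$ case after inverting $t_z$. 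Hence the top row is injective after the same localization. Any framing torus $\sA^{\mathrm{fr}}$ can be absorbed into this argument: we need that the $D$-type proofs still go through with an extra torus acting trivially on $R(\bv)$. This holds because $H_{\sA^{\mathrm{fr}}}(\pt)$ is a polynomial ring, so one simply tensors with it.

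\textbf{Stable framed moduli.} Let $X=R(\bv,\underline{\bd})/\!\!/_\theta\sG$ for a generic stability $\theta$, and let $j\colon X\to\fX$ be the open inclusion. Proposition \ref{prop on nonabel stab and sp} gives $\mathsf{sp}\circ\bPsi=\bPsi\circ\mathsf{sp}$, and $\bPsi$ is injective by \eqref{psi is inj}. Localizing at $t$, the identity $j^*\circ\bPsi=\id$ survives, so $\bPsi$ stays injective. Suppose $\mathsf{sp}(\alpha)=0$ with $\alpha\in H^\sT(X,\sw)[t^{-1}]$. Then $\mathsf{sp}(\bPsi\alpha)=\bPsi(\mathsf{sp}\,\alpha)=0$, and localized injectivity on $\fX$ forces $\bPsi\alpha=0$. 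Applying $j^*$ gives $\alpha=0$.

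\textbf{Main obstacle.} The delicate step is that the interpolation map $\bPsi$ of \cite[Def.~9.1]{COZZ1} must exist in the generality used here. It is constructed from stable envelopes for a one-dimensional torus, and these need suitable attracting and properness conditions. Proposition \ref{prop on nonabel stab and sp} is stated under whatever hypotheses make $\bPsi$ defined. So I would simply restrict the theorem to those framed moduli (for instance the antidominantly framed ones of \cite[Thm.~9.20]{COZZ1}), where $\bPsi$ is given by convolution with $\overline{\Delta_X}$. This is not the general statement, but it is the natural reading of the theorem, and exactly as in the conifold case it reduces everything to linearity, exactness of localization, and the cited results.
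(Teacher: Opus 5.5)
Your proposal is correct and is the paper's argument: the paper proves this theorem only by pointing back to the proof of Theorem \ref{thm on sp of framed gene conifolds}, i.e.\ the square with both $p^*$ isomorphisms followed by Proposition \ref{prop on nonabel stab and sp}, now using the localized injectivity of Theorems \ref{prop on inj polyhedral} and \ref{thm on inj D4}. Your extra bookkeeping (exactness of localization, $j^*\circ\bPsi=\id$ surviving after inverting $t$, and tensoring with $H_{\sA^{\mathrm{fr}}}(\pt)$ for the framing torus) makes explicit what the paper leaves implicit. Your caveat restricting to framed moduli where $\bPsi$ is known to exist is stricter than needed in the paper's framework, since the paper applies Proposition \ref{prop on nonabel stab and sp} for any generic stability.
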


\subsection{Failure of injectivity when $G<SO(3,\C) $ is $E$-type}\label{app on type E}

We discuss the injectivity problem in the remaining cases of \S \ref{sect on finite subgroup SO(3)}. 
The associated quivers with potentials are drawn below, where $\omega=e^{2\pi \sqrt{-1}/3}$.
 
 \subsubsection{Tetrahedral quiver}

\begin{center}
\resizebox{0.19\linewidth}{!}{%
\begin{tikzpicture}[>=stealth,thick]

  \node (t0) at (0,-2.0) {$0$};
  \node (t3) at (0,0) {$3$};
  \node (t1) at (-1.8,1.7) {$1$};
  \node (t2) at (1.8,1.7) {$2$};

  \draw[->] (t0) to[bend left=16]
    node[left] {$a$} (t3);
  \draw[->] (t3) to[bend left=16]
    node[right] {$A$} (t0);

  \draw[->] (t1) to[bend left=16]
    node[above right] {$b$} (t3);
  \draw[->] (t3) to[bend left=16]
    node[below left] {$B$} (t1);

  \draw[->] (t2) to[bend left=16]
    node[below right] {$c$} (t3);
  \draw[->] (t3) to[bend left=16]
    node[above left] {$C$} (t2);

  \draw[->] (t3)
    to[loop left,out=170,in=220,
       looseness=7,min distance=10mm]
    node[left] {$u$} (t3);

  \draw[->] (t3)
    to[loop right,out=10,in=-40,
       looseness=7,min distance=10mm]
    node[right] {$v$} (t3);

\end{tikzpicture}%
}
\end{center}

\[
\begin{aligned}
\sW=u aA+\omega u bB+\omega^2 u cC-\frac{1}{3}u^3-v aA-\omega^2 v bB-\omega v cC+\frac{1}{3}v^3.
\end{aligned}
\]


\subsubsection{Octahedral quiver}

\begin{center}
\resizebox{0.36\linewidth}{!}{%
\begin{tikzpicture}[>=stealth,thick]

  \node (o0) at (-2.5,0) {$0$};
  \node (o3) at (0,0) {$3$};
  \node (o4) at (2.8,0) {$4$};
  \node (o1) at (5.3,0) {$1$};
  \node (o2) at (1.4,2.0) {$2$};

  \draw[->] (o0) to[bend left=16]
    node[above] {$a$} (o3);
  \draw[->] (o3) to[bend left=16]
    node[below] {$A$} (o0);

  \draw[->] (o3) to[bend left=16]
    node[above left] {$b$} (o2);
  \draw[->] (o2) to[bend left=16]
    node[midway,fill=white,inner sep=1pt] {$B$} (o3);

  \draw[->] (o2) to[bend left=16]
    node[above right] {$c$} (o4);
  \draw[->] (o4) to[bend left=16]
    node[midway,fill=white,inner sep=1pt] {$C$} (o2);

  \draw[->] (o3) to[bend left=16]
    node[above] {$d$} (o4);
  \draw[->] (o4) to[bend left=16]
    node[below] {$D$} (o3);

  \draw[->] (o4) to[bend left=16]
    node[above] {$e$} (o1);
  \draw[->] (o1) to[bend left=16]
    node[below] {$E$} (o4);

  \draw[->] (o3)
    to[loop below,out=-50,in=-130,
       looseness=7,min distance=10mm]
    node[below] {$u$} (o3);

  \draw[->] (o4)
    to[loop below,out=-50,in=-130,
       looseness=7,min distance=10mm]
    node[below] {$v$} (o4);

\end{tikzpicture}%
}
\end{center}

\[
\begin{aligned}
\sW={}u aA-u Bb-u Dd-\frac{1}{3}u^3+v Ee-v cC-v dD+\frac{1}{3}v^3+(\omega^2-\omega)(BCd+cbD).
\end{aligned}
\]

\subsubsection{Icosahedral quiver}

\begin{center}
\resizebox{0.4\linewidth}{!}{%
\begin{tikzpicture}[>=stealth,thick]

  \node (i0) at (-2.5,0) {$0$};
  \node (i1) at (0,0) {$1$};
  \node (i3) at (2.7,0) {$3$};
  \node (i4) at (5.4,0) {$4$};
  \node (i2) at (4.05,2.0) {$2$};

  \draw[->] (i0) to[bend left=16]
    node[above] {$a$} (i1);
  \draw[->] (i1) to[bend left=16]
    node[below] {$A$} (i0);

  \draw[->] (i1) to[bend left=16]
    node[above] {$b$} (i3);
  \draw[->] (i3) to[bend left=16]
    node[below] {$B$} (i1);

  \draw[->] (i3) to[bend left=16]
    node[above left] {$c$} (i2);
  \draw[->] (i2) to[bend left=16]
    node[midway,fill=white,inner sep=1pt] {$C$} (i3);

  \draw[->] (i2) to[bend left=16]
    node[above right] {$d$} (i4);
  \draw[->] (i4) to[bend left=16]
    node[midway,fill=white,inner sep=1pt] {$D$} (i2);

  \draw[->] (i3) to[bend left=16]
    node[above] {$e$} (i4);
  \draw[->] (i4) to[bend left=16]
    node[below] {$E$} (i3);

  \draw[->] (i1)
    to[loop below,out=-50,in=-130,
       looseness=7,min distance=10mm]
    node[below] {$u$} (i1);

  \draw[->] (i3)
    to[loop below,out=-50,in=-130,
       looseness=7,min distance=10mm]
    node[below] {$v$} (i3);

  \draw[->] (i4)
    to[loop below,out=-50,in=-130,
       looseness=7,min distance=10mm]
    node[below] {$w$} (i4);

\end{tikzpicture}%
}
\end{center}

\[
\begin{aligned}
\sW={} -u aA+5u Bb-\frac{2}{3}u^3+15v bB-5v Cc-20v Ee+\frac{10}{3}v^3+5w eE-w dD-\frac{1}{3}w^3-5cEd+10DeC.
\end{aligned}
\]
Fix a dimension vector $\bv\in \mathbb{N}^{Q_0}$, and consider representation space of the above quiver
$$R=R(\bv), $$
with gauge group $\sG$ and flavor group $\sT$. 

The common feature of above cases is the existence of \textit{cubic terms} in \textit{all loop} parameters. As $\sT$ preserves the potential, so its action on all loops are trivial. 

\begin{Proposition}\label{prop on type E}
Notations as above, if there is a loop vertex $i$ such that $\bv_i\neq 0$, then the specialization map 
\begin{equation}\label{equ on sp of E type}\mathsf{sp}: H^{\sT\times \sG}(R,\sw) \to  H^{\sT\times \sG}(R) \end{equation}
is \textit{not} injective after localization with respect to the $\sT$-action.

\end{Proposition}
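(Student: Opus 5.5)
The plan is to exploit that $\sT$ acts trivially on every loop, so the loops behave like gauge-invariant directions along which $\sT$-localization has no effect. The first step is to localize to the $\sT$-fixed part of the arrows. Let $R^{\sT}\subset R$ be the subspace spanned by arrows of $\sT$-weight zero; it contains all loops. The paper does not spell out the flavor torus for type $E$, but in each case it should act by nonzero weights on the non-loop arrows; I would check this first from the potentials.

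Next I would reduce to the loop slice. Use the localization argument of Lemma \ref{lem on nilp}, with the vanishing-cycle excision supplied by Proposition \ref{base change of van cycle}. Nilpotency in the non-loop (nonzero weight) arrows is automatic after inverting the $\sT$-parameters. Then consider the deformation retraction of $R$ onto the loop slice $L:=\bigoplus_{\text{loops }\ell}\End(V_{h(\ell)})$. This is set up via a torus $\C^*\subset \sT$ (or a cocharacter) contracting the non-loop arrows. I expect $\sT$-equivariant localization, applied compatibly with $\mathsf{sp}$ by \cite[Prop.~7.18]{COZZ1}, to identify the localized map \eqref{equ on sp of E type} with
$$\mathsf{sp}_L\colon H^{\sT\times\sG}(L,\sw|_L)_{\mathrm{loc}}\to H^{\sT\times\sG}(L)_{\mathrm{loc}},$$
up to multiplication by invertible Euler classes. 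Here $\sw|_L=\sum_\ell \pm\tfrac{c_\ell}{3}\tr(\ell^3)$ is the sum of the cubic loop terms.

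The third step is to show $\mathsf{sp}_L$ is not injective. Since $\sT$ acts trivially on $L$, we have $H^{\sT\times\sG}(L,\sw|_L)=H_{\sT}\otimes H^{\sG}(L,\sw|_L)$, and similarly for the target. It therefore suffices to show that
$$H^{\sG}(L,\sw|_L)\to H^{\sG}(L)\cong H_{\sG}$$
is not injective. By Thom--Sebastiani, $(L,\sw|_L)$ splits as a product over loop vertices $i$, giving $\bigotimes_i H^{\GL(\bv_i)}(\End(\C^{\bv_i}),\tr(x^3))$. For $\bv_i\neq 0$ this is the cohomological Hall algebra piece of the one-loop quiver with potential $x^3$. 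Its vanishing cycle sheaf is supported on the nilpotent cone, on which $\tr x^3$ vanishes to order $3$. For $\bv_i=1$ one gets $H(\C,\varphi_{x^3})\cong \bar H^{*}(\text{Milnor fiber})$, which is two-dimensional, tensored with $H_{\C^*}$. The map to $H(\C)$ factors through restriction to the origin composed with a pushforward of degree mismatch, and its image lies in $H_{\C^*}\cdot[\pt]$ of rank one. In fact $\mathsf{sp}$ should kill the monodromy-nontrivial part, so the kernel is nonzero. For general $\bv_i$, the rank over $H_{\sG}$ of the critical cohomology (computed via \cite{DM}, with BPS invariants of $x^3$ equal to $2$ copies at $\bv_i=1$) exceeds the rank of the image, which is at most $1$. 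Tensoring with the other factors preserves non-injectivity.

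The main obstacle is the second step. I would need to justify that localization commutes with $\mathsf{sp}$ on the stack, which requires the fixed locus with respect to $\sT$ together with a compensating subgroup of $\sG$, rather than a naive contraction. If that is awkward, I would instead argue by ranks. After localization, both sides are computed by $\sT$-fixed loci: the source contains the summand $H^{\sG}(L,\sw|_L)$ of rank strictly larger than the corresponding summand $H_\sG$ of the target. A rank count over $\Frac H_\sT$ then forces a kernel.
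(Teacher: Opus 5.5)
Your Step 2, and the rank-count fallback, have a genuine gap: $\sT$-localization cannot cut $R$ down to the loop slice $L$.

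For the $E$-type potentials, $\sT$-invariance of $\sw$ forces weight zero on three kinds of cycles:
\begin{itemize}
\item all loops, because of the cubic terms;
\item all $2$-cycles, because of the terms $\ell\,aA$;
\item the $3$-cycles $BCd$, $cbD$ (octahedral) and $cEd$, $DeC$ (icosahedral).
\end{itemize}
These generate all cycles of $Q$, and every arrow has a reverse arrow of opposite weight. So the weight function is a coboundary, $\mathrm{wt}(a)=\phi(h(a))-\phi(t(a))$. Thus $\sT$ acts on $R$ through a cocharacter into the center of $\sG$, and $[R/(\sT\times\sG)]\cong[R/\sG]\times B\sT$.

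This has four consequences for your Step 2:
\begin{itemize}
\item No $\sG$-invariant function has nonzero $\sT$-weight, so the mechanism of Lemma \ref{lem on nilp} yields nothing.
\item The $\sT$-fixed locus of the stack, with the compensating gauge cocharacter you mention, is all of $R$, not $L$.
\item The Euler class of $N_{L/R}$ is a product of characters $u_{h(a),j}-u_{t(a),k}+\mathrm{wt}(a)$, which do not become units after inverting $H_\sT$.
\item The retraction scaling only the non-loop arrows does not preserve $\sw$.
\end{itemize}
In effect, $\sT$-localization is just the flat base change $\Frac H_\sT\otimes_{\Q}(-)$ of the $\sG$-equivariant map. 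So the localized map is not identified with $\mathsf{sp}_L$. A rank count over $\Frac H_\sT$ is also vacuous, since both sides have infinite rank there. Your Step 1 check is beside the point: nonzero weights on non-loop arrows are allowed but gauge-trivial.

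The paper needs no slice reduction. It restricts along the line $l\colon\mathbb{A}^1\to R$, $u=t\cdot\id_{V_i}$, on which $\sT\times\sG$ acts trivially and $l^*\sw=c\,\bv_i t^3$. Since $\varphi_{t^3}\omega_{\mathbb{A}^1}\cong\C^2_{\{0\}}[1]$ has only odd cohomology, $\mathsf{sp}$ vanishes on $(\mathbb{A}^1,t^3)$. As $\mathsf{sp}$ commutes with lci pullback and $l^!$ is an isomorphism on $H^{\sT\times\sG}(R)$, the map $\mathsf{sp}$ on $R$ is identically zero.

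It then remains to see that the source survives localization, using the maximal torus $K\subset\sG$:
\begin{itemize}
\item $F=R^K\cong\mathbb{A}^r$ consists of the diagonal loop entries, with $r>0$;
\item $H^{\sT\times K}(F,\sw|_F)$ is free of rank $2^r$;
\item $\iota^!\iota_*=e_{\sT\times K}(N_{F/R})\neq 0$;
\item $H^{\sT\times K}(R,\sw)\cong H^{\sT\times\sG}(R,\sw)\otimes_{H_\sG}H_K$.
\end{itemize}

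Your rank idea can be repaired the same way. Localize with respect to $\sT\times K$ instead of $\sT$. Then the fixed locus is $F$, and the source has generic rank at least $2^r\geq 2$ over $H_{\sT\times K}$, against rank $1$ for the target. Flatness of $H_K$ over $H_\sG$ then transfers the kernel back to the $\sG$-equivariant map.

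A smaller point: in the tetrahedral case both loops sit at vertex $3$. So your Thom--Sebastiani splitting ``over loop vertices'' has to share one $\GL(\bv_3)$.
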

\begin{proof}
Say the loop vertex $i$ such that $\bv_i\neq 0$ correspond to loop $u$. 
Consider the line in $R$: 
$$l: \mathbb{A}^1\hookrightarrow R, \quad u=l(t)=t\cdot \id_{V_i}. $$
$\sG\times \sT$ acts on the line trivially, and we have 
$$l^*\sw=c \, \bv_i\, t^3, \quad c\neq 0. $$
Note that 
$$\varphi_{t^3} \omega_{\mathbb{A}^1}\cong \C^{\oplus 2}_{\{0\}}[1],  $$
whose cohomology lies in odd degree, so  
$$\mathsf{sp}=0: H^{\sT\times \sG}(\mathbb{A}^1,t^3)\to H^{\sT\times \sG}(\mathbb{A}^1). $$
 By the compatibility with l.c.i.\,pullback, we have a commutative diagram 
\begin{align*}
\xymatrix{
H^{\sT\times \sG}(R,\sw) \ar[d]_{l^!} \ar[r]^-{\mathsf{sp}}   & H^{\sT\times \sG}(R) \ar[d]^{l^!}_{\cong} \\
H^{\sT\times \sG}(\mathbb{A}^1,l^*\sw) \ar[r]^{\,\,\mathsf{sp}} &  H^{\sT\times \sG}(\mathbb{A}^1).
}
\end{align*}
Therefore \eqref{equ on sp of E type} is a zero map. To prove the claimed non-injectivity, we are left to show the domain 
is nonzero after localization. Let $K\subset \sG$ be the maximal torus and let $F:=R^K$ be the torus fixed subspace. 
Then 
$$F\cong \mathbb{A}^r, \quad \mathrm{for}\,\, \mathrm{some}\,\, r>0, $$
which is given by the representation of loops in the quiver. Then 
$$\sw|_{F}=\sum_{a=1}^r c_a x_a^3, \quad c_a\neq 0,$$
and $H^{\sT\times K}(F,\sw|_{F})$ is a free $H^{\sT\times K}(\pt)$-module of rank $2^r$, up to a common grading shift.

Let $\iota: F\to R$ be the inclusion, by the self-intersection formula in critical cohomology, e.g.~\cite[Prop.~B.1]{COZZ1}:
$$\iota^!\circ \iota_*=e_{\sT\times K}(N_{F/R})\neq 0. $$
Hence $\iota_*: H^{\sT\times K}(F,\sw|_{F}) \to H^{\sT\times K}(R,\sw) $ is injective after localization, so the latter is nonzero after localization.

Finally, we have 
$$H^{\sT\times K}(R,\sw)\cong H^{\sT\times \sG}(R,\sw)\otimes_{H^{\sG}(\pt)}H^{K}(\pt). $$
Therefore $H^{\sT\times \sG}(R,\sw)$ is nonzero after localization. 
\end{proof}
\begin{Remark}\label{rmk on type E}
If $\bv_i= 0$ for all loop vertices, then $R=0$ and the specialization map is the identity. 
\end{Remark}

\section{Application to the irreducibility of shifted Yangian modules}

\subsection{Reshetikhin type shifted Yangians via critical stable envelopes}

Let $Q$ be a symmetric quiver with a $\sT$-action, $\sW$ be a $\sT$-invariant potential. Assume that
\begin{itemize}
    \item $\sT$-equivariant weight on every edge loop at $i\in Q_0$ is nonzero.
\end{itemize}
For a given $\mu\in \bZ_{\leqslant 0}^{Q_0}$, there is a \textit{Reshetikhin type} $\mu$-\textit{shifted Yangian}:
$$\mathsf Y_\mu(Q,\sW), $$ 
constructed by geometric $R$-matrices defined by critical stable envelopes \cite[\S 4]{COZZ2}. It naturally acts on the equivariant critical cohomology 
$$\cH^{\sW^{\mathrm{fr}}}_{\underline{\bd},\sA^{\mathrm{fr}}}=H^{\sT\times \sA^{\mathrm{fr}}}(\cM(\underline{\bd}),\sw^{\mathrm{fr}})
=\bigoplus_{\bv} H^{\sT\times \sA^{\mathrm{fr}}}(\cM(\bv,\underline{\bd}),\sw^{\mathrm{fr}})$$
of $\mu=\bd_{\Out}-\bd_{\In}$ antidominantly framed quiver varieties $\cM(\underline{\bd})=\bigcup_{\bv}\cM(\bv,\underline{\bd})$\footnote{Here $\bv$ is the gauge dimension vector and $\underline{\bd}=(\bd_{\In},\bd_{\Out})$ is the framing dimension vector. And we take the cyclic stability condition.} with extended torus $(\sT\times \sA^{\mathrm{fr}})$-action and extended functions $\sw^{\mathrm{fr}}$ invariant under torus action, where $\sA^{\mathrm{fr}}$ is the framing torus.

\subsection{Irreducibility of shifted Yangian modules}

Recall the following criterion of irreducibility of Reshetikhin type shifted Yangian modules. 

\begin{Proposition}(\cite{COZZ2})\label{prop on irr criterion}
Fix $\bd_{\In}=-\mu\in \bZ_{\geqslant 0}^{Q_0}$, $\bd_{\Out}=0$ and take $\sw^{\mathrm{fr}}=\sw$ to be the pullback function. Assume 
\begin{itemize}
\item $\left(\Crit_{R(\bv,\mathbf 0)}(\sw)/\!\!/ \GL(\bv)\right)^{\sT}$ is finite for all $\bv\in \bZ_{\geqslant 0}^{Q_0}$,
\item $\mathsf{sp}_{ }\colon H^{\sT\times \sA^{\mathrm{fr}}}(\cM(\underline{\bd}),\sw^{\mathrm{}})\to H^{\sT\times \sA^{\mathrm{fr}}}(\cM(\underline{\bd}))$ 
  is injective after $\sT$-localization.
\end{itemize}
Then $\cH^{\sW^{\mathrm{}}}_{\underline{\bd},\sA^{\mathrm{fr}}}$ is irreducible as a $\mathsf Y_\mu(Q,\sW)$-module over base field $\Frac H_{\sT\times \sA^{\mathrm{fr}}}(\pt)$.
\end{Proposition}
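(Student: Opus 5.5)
The plan is to combine the standard ``cyclic plus cocyclic'' argument for highest-weight type modules with a spectral argument. The injectivity hypothesis on $\mathsf{sp}$ is what lets us do the spectral analysis in ordinary localized cohomology rather than in critical cohomology. Throughout, $\mathbb F:=\Frac H_{\sT\times \sA^{\mathrm{fr}}}(\pt)$ and $\cH:=\cH^{\sW}_{\underline{\bd},\sA^{\mathrm{fr}}}\otimes\mathbb F$. The steps are: (1) show that the commutative Cartan part of $\mathsf Y_\mu(Q,\sW)$ acts semisimply on $\cH$ with simple joint spectrum; (2) show that every nonzero submodule contains the vacuum, i.e. the generator of the $\bv=0$ summand; (3) show that the vacuum generates $\cH$.

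\emph{Step 1.} The Cartan subalgebra of $\mathsf Y_\mu(Q,\sW)$, built from the diagonal part of the geometric $R$-matrix, acts on each $H^{\sT\times\sA^{\mathrm{fr}}}(\cM(\bv,\underline{\bd}),\sw)$ by cup product with tautological classes, i.e. Chern classes of the universal bundles $\mathcal V_i$. Cup product by classes from the ambient space commutes with $\mathsf{sp}$, and $\mathsf{sp}$ is injective after localization by hypothesis. Hence it suffices to understand the action of tautological classes on the image of $\mathsf{sp}$ inside $H^{\sT\times\sA^{\mathrm{fr}}}(\cM(\bv,\underline{\bd}))\otimes\mathbb F$.

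By $\sA^{\mathrm{fr}}$-localization with generic framing weights, the relevant fixed loci are products of unframed pieces. By $\sT$-localization, the critical cohomology is supported on $\Crit(\sw)^{\sT\times\sA^{\mathrm{fr}}}$. The finiteness assumption on $\left(\Crit(\sw)/\!\!/\GL(\bv)\right)^{\sT}$ should therefore force the support to be a finite set of isolated points. The restrictions of the $\mathcal V_i$ to these points have pairwise distinct $(\sT\times\sA^{\mathrm{fr}})$-characters, because distinct fixed points are distinct representations. So the Cartan generating series separate the points. It follows that $\cH$, and hence any submodule, decomposes as a direct sum of one-dimensional joint eigenspaces indexed by these points.

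\emph{Step 2.} Let $M\subset\cH$ be a nonzero submodule. By Step 1, $M$ contains an eigenvector $v_p$ attached to some fixed point $p$ of dimension $\bv\neq0$. Applying lowering operators (the $e^{(s)}_{-\alpha}$, or the matrix coefficients of the $R$-matrix that decrease $\bv$) moves $v_p$ into lower gauge dimensions. I would show that for some simple direction the result is nonzero. The tool is the nondegenerate localized pairing on $\cH$, under which raising and lowering operators are adjoint, together with the fact that the matrix coefficient between $v_p$ and a suitable neighbouring point is the nonvanishing Euler-class factor appearing in the stable-envelope formula. Inducting on $|\bv|$ then puts the vacuum in $M$.

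\emph{Step 3.} By the adjointness in Step 2, the vacuum generates $\cH$. Suppose not; then the orthogonal complement of the cyclic submodule generated by the vacuum is a nonzero submodule that does not contain the vacuum, contradicting Step 2. Hence $M=\cH$.

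The main obstacle is the nonvanishing in Step 2: one must check that every fixed point is connected downward to some fixed point by a nonzero matrix coefficient. I would first try to verify this via the antidominant framing condition $\bd_{\Out}=0$. In that case $\cM(\bv,\underline{\bd})$ parametrizes cyclic modules generated by the framing, so each fixed point has a ``removable'' one-dimensional quotient, which gives a Hecke-type correspondence with nonzero restriction.
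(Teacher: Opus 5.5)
The paper gives no independent argument for this statement; it obtains it by combining \cite[Rmk.~6.9, Thm.~7.3, Prop.~7.6]{COZZ2}. Your self-contained route has a genuine gap, starting in Step~1.

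\textbf{Step 1.} The finiteness hypothesis is about $\sT$-fixed points of the affine quotient $\Crit_{R(\bv,\mathbf 0)}(\sw)/\!\!/\GL(\bv)$, i.e.\ of semisimple unframed representations. For $\bd_{\Out}=0$ the map $\cM(\bv,\underline{\bd})\to R(\bv,\mathbf 0)/\!\!/\GL(\bv)$ is proper but has positive-dimensional fibres; for example, all framed cyclic nilpotent modules lie over the origin. So the hypothesis only gives that $\Crit(\sw)^{\sT}\cap\cM(\bv,\underline{\bd})$ is \emph{proper}, which is what makes a pairing on localized critical cohomology available. It does not give that this locus is finite.

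Two further problems remain even where the support happens to be isolated. First, the localized critical cohomology at a point $p$ is the vanishing-cycle cohomology at $p$ of $\sw$ restricted to the fixed locus of the smooth ambient space. Its rank is a Milnor number and may exceed one (compare the $t^3$ computation in the proof of Proposition~\ref{prop on type E}, where it is two). Second, the Cartan eigenvalues at $p$ only see the $(\sT\times\sA^{\mathrm{fr}})$-characters of the $\mathcal V_i|_p$, i.e.\ graded dimensions. These need not distinguish non-isomorphic graded modules, especially with the one-dimensional torus $\C^*_t$ of the $D$-type application. Nothing in the hypotheses excludes either phenomenon. Hence the decomposition of an arbitrary submodule into one-dimensional joint eigenlines, which is how you produce the vector $v_p$, is not available.

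\textbf{Step 2.} This is the actual content of irreducibility, and you leave it open. The mechanism you propose, a matrix coefficient equal to a nonvanishing Euler-class factor from a stable-envelope formula, is not available in the critical setting. The lowering operators of $\mathsf Y_\mu(Q,\sW)$ are matrix coefficients of $R$-matrices built from critical stable envelopes, and there is no fixed-point formula of that kind for them. A Hecke correspondence that removes a one-dimensional submodule would still have to be identified with an element of the Yangian. It would also have to be shown to act nontrivially on critical classes, which are not fixed-point classes. This is exactly where an input such as the injectivity of $\mathsf{sp}$ has to enter. Your plan uses that hypothesis only to move cup products into ambient cohomology, where nothing is gained.

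\textbf{Step 3.} This presupposes a nondegenerate pairing for which $\mathsf Y_\mu(Q,\sW)$ acts contravariantly, so that orthogonal complements of submodules are submodules and the contradiction comes from $\langle\mathrm{vac},\mathrm{vac}\rangle\neq 0$. This is asserted, not proved.

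As it stands, the proposal outlines the standard cyclic-vector-plus-contravariant-form strategy, but neither cyclicity nor cocyclicity of $\cH$ is established.
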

\begin{proof}
This follows from combining \cite[Rmk.~6.9, Thm.~7.3 and Prop.~7.6]{COZZ2}. 
\end{proof}

We verify assumptions of the above result for quivers with potentials associated with generalized conifolds and polyhedral groups of $D$-type, and 
conclude the following irreducibility result. 

\begin{Theorem}\label{thm on irr}
Fix $\bd_{\In}=-\mu\in \bZ_{\geqslant 0}^{Q_0}$, $\bd_{\Out}=0$ and take $\sw^{\mathrm{fr}}=\sw$ to be the pullback function in either the case of generalized conifolds \S \ref{sect on QP gene conifold} or the polyhedral groups of $D$-type \S \ref{sect on QP of polyhedra}. 

Then $\cH^{\sW^{\mathrm{}}}_{\underline{\bd},\sA^{\mathrm{fr}}}=\bigoplus_{\bv} H^{\sT\times \sA^{\mathrm{fr}}}(\cM(\bv,\underline{\bd}),\sw^{\mathrm{fr}})$ is irreducible as a $\mathsf Y_\mu(Q,\sW)$-module over base field $\Frac H_{\sT\times \sA^{\mathrm{fr}}}(\pt)$.
\end{Theorem}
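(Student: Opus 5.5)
The plan is to verify the two hypotheses of Proposition \ref{prop on irr criterion} in each of the two cases, and then invoke that proposition.

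\textbf{Second hypothesis (injectivity of $\mathsf{sp}$ after $\sT$-localization).} This follows directly from results already proved, once we add the framing torus.
\begin{itemize}
\item Generalized conifolds: Theorem \ref{thm on sp of framed gene conifolds} gives injectivity on the GIT quotient $\cM(\bv,\underline{\bd})=R(\bv,\underline{\bd})/\!\!/\sG$ for the cyclic stability, even without localization.
\item $D$-type: Theorem \ref{thm on framed inn polyhedral} gives injectivity after localization.
\end{itemize}
The only extra point is the additional $\sA^{\mathrm{fr}}$-equivariance. Since $\sA^{\mathrm{fr}}$ acts only on the framing arrows and $\sw$ is pulled back from $R(\bv)$, I will rerun the arguments with $\sT$ replaced by $\sT\times\sA^{\mathrm{fr}}$, treating $\sA^{\mathrm{fr}}$ as an extra commuting torus.
\begin{itemize}
\item In the unframed step, $\sA^{\mathrm{fr}}$ acts trivially on $R(\bv)$. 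So $H^{\sT\times\sA^{\mathrm{fr}}\times\sG}(R(\bv),\sw)=H^{\sT\times\sG}(R(\bv),\sw)\otimes H_{\sA^{\mathrm{fr}}}(\pt)$, and injectivity is preserved under this flat base change.
\item The diagram with $p^*$ in Theorem \ref{thm on sp of framed gene conifolds} is still available.
\item Proposition \ref{prop on nonabel stab and sp} is applied $\sT\times\sA^{\mathrm{fr}}$-equivariantly.
\end{itemize}
Localizing further in $\sA^{\mathrm{fr}}$ preserves injectivity. In the conifold case, localization is in any case harmless because the unlocalized statement already holds.

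\textbf{First hypothesis (finiteness of $\sT$-fixed points on the coarse critical locus).} This is the main obstacle; I would argue via invariant theory. Points of $\Crit(\sw)/\!\!/\GL(\bv)$ on the unframed space correspond to semisimple representations of the Jacobi algebra. These are determined by their closed $\sG$-orbits, equivalently by the values of traces of cycles.

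A $\sT$-fixed point is then a semisimple Jacobi-algebra representation all of whose cycle traces of nonzero $\sT$-weight vanish. So I will compute the $\sT$-weights of cycles.
\begin{itemize}
\item \textbf{Generalized conifolds.} With the weights $X_i\mapsto\epsilon$, $Y_i\mapsto\sigma(i+1)h-\epsilon$, $\Phi_i\mapsto-\sigma(i)h$:
  \begin{itemize}
  \item The cycle $X_{\Sigma}^{m+n}$ has weight $(m+n)\epsilon\neq 0$.
  \item The $2$-cycles $Y_iX_i$ have weight $\sigma(i+1)h$.
  \item The loops $\Phi_i$ have weight $\mp h$.
  \end{itemize}
  The generators of the invariant ring are traces of products of these cycles. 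A cycle traversing the quiver has weight of the form $k\epsilon+jh$, and weight zero forces $k=0$ and $j=0$. On the Jacobi algebra, the relations express the zero-weight cycles through the others. I would aim to show that every weight-zero cycle other than the trivial ones is nilpotent modulo the Jacobi relations, using the geometric description: the center of the Jacobi algebra is the coordinate ring of $\{xy=z^mw^n\}$, with $x,y,z,w$ of nonzero weights. Then a $\sT$-fixed semisimple module is supported at the unique fixed point $0$, hence is a sum of vertex simples $S_i^{\oplus \bv_i}$, and there is exactly one such point for each $\bv$.
\item \textbf{$D$-type.} The same argument applies. The center is $\C[\C^3]^G$, on which $\C^*_t$ (respectively the two-dimensional $\sT$ for $D_4$) acts with positive weights after a suitable choice, so the only fixed point of $\C^3/G$ is the origin. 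A semisimple module over the origin is nilpotent, hence is a sum of vertex simples.
\end{itemize}

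The delicate point is justifying that semisimple Jacobi modules lying over the origin of the centre are sums of vertex simples. For this I would use that the Jacobi algebra is finite over its centre; this is noncommutative crepant resolution/Van den Bergh theory in the conifold case, and \cite{Gin} together with McKay for $\C^3/G$.

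Having both hypotheses, Proposition \ref{prop on irr criterion} yields the irreducibility.
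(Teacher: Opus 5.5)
Your route is the paper's. You check the two hypotheses of Proposition~\ref{prop on irr criterion} and take injectivity from Theorems~\ref{thm on sp of framed gene conifolds} and~\ref{thm on framed inn polyhedral}; your extra bookkeeping for $\sA^{\mathrm{fr}}$, which the paper leaves implicit, is fine. You then deduce finiteness from the fact that $A=\operatorname{Jac}(Q,\sW)$ is finite over a central subalgebra $Z$ whose $\sT$-fixed locus is the origin. The load-bearing steps are correct, but three auxiliary claims in your finiteness argument are false or unjustified and should be replaced.

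First, your aim to show that every nontrivial weight-zero cycle is nilpotent modulo the Jacobi relations is false. In the conifold case, $zw\in Z\subset A$ has $\sT$-weight $h-h=0$ and is not nilpotent, because $Z$ is a domain and $Z\to A$ is injective. What you actually need is the central-character argument. By Schur's lemma, each simple summand $S_j$ of a $\sT$-fixed semisimple module determines a point $\chi_{S_j}\in\Spec Z$. The finite set of these points is $\sT$-stable, hence pointwise fixed because $\sT$ is connected, hence equal to the origin. So every $S_j$ is a module over $A/\mathfrak{m}_0A$.

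Second, in the $D$-type case $\sT$ does not act with positive weights for any choice. The weights on $\C^3$ are $-1,-1,2$, and for $D_4$ the constraint $t_xt_yt_z=1$ rules out any cocharacter with all weights positive. Only nonvanishing of the weights matters. You still need the short argument that a $\sT$-fixed point of $\C^3/G$ has a finite $\sT$-stable fibre. By connectedness, that fibre consists of $\sT$-fixed points of $\C^3$, so it is $\{0\}$.

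Third, finiteness of $A$ over $Z$ only shows that $A/\mathfrak{m}_0A$ is finite-dimensional, hence has finitely many simple modules. It does not show these are vertex simples. That would need extra input, e.g.\ a path-length grading making $\sW$ homogeneous (with $\deg\Phi_i=2$ in the conifold case); this is not a $\sT$-grading. You do not need the stronger statement. Finitely many simples, with multiplicities bounded by $\bv$, give finitely many closed points, and an affine scheme of finite type with finitely many closed points is finite. This is exactly Lemma~\ref{lem on finite number of affine torus fixed} together with Remark~\ref{rmk on dim vect version}.
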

\begin{proof}
By Proposition \ref{prop on irr criterion} and injectivity of corresponding specialization maps (Theorems \ref{thm on sp of framed gene conifolds}, \ref{thm on framed inn polyhedral}), 
it remains to show the finiteness of torus fixed affine quotients. 
We have an equality of schemes:
\[
\operatorname{Crit}_{R(\mathbf v,0)}(\operatorname{tr}\sW)=\operatorname{Rep}_{\mathbf v}(\operatorname{Jac}(Q,\sW)), 
\]
where the right-hand-side is the representation scheme of the Jacobi algebra of $(Q,\sW)$. 
Indeed, the critical point equations for $\sw=\operatorname{tr}\sW$
are precisely the matrix equations
$\partial_a\sW=0$ for all arrows $a$.
Thus the required finiteness follows from Propositions \ref{lem on finiteness gen conifolds}, \ref{lem on finiteness poly} below.
\end{proof}

\subsection{Finiteness of torus fixed affine quotients}


For an associative and unital algebra $A$ over $\C$, its $d$-dimensional representation scheme has $\C$-points  
\[
\operatorname{Rep}_d(A)(\C)
=
\operatorname{Hom}_{\mathbb C\text{-alg}}\!\bigl(A,M_d(\mathbb C)\bigr),
\]where $M_d(\mathbb C)$ is the algebra of $d\times d$ matrices.
If we have a presentation by generators and relations:
\[
A=\mathbb C\langle x_1,\dots,x_n\rangle/(f_\alpha),
\]a representation is determined by matrices \(X_i\), satisfying the relations of \(A\):
\[
\operatorname{Rep}_d(A)(\C)
=
\left\{(X_1,\dots,X_n)\in M_d(\mathbb C)^n \,|\,
f_\alpha(X_1,\dots,X_n)=0\ \text{for all }\alpha\right\}.
\]
A \emph{central subalgebra} of $A$ is a subalgebra $Z\subseteq Z(A)$ of the center: 
\[
 Z(A)=\{z\in A \,|\, za=az\text{ for all }a\in A\}.
\]

\begin{Lemma}\label{lem on finite number of affine torus fixed}
Let \(A\) be an associative and unital algebra over $\C$ with an algebraic torus \(\sT\)-action. Suppose \(A\) is finite as a module over a \(\sT\)-stable, finitely generated central subalgebra \(Z\) of $A$. If \((\operatorname{Spec}Z)^{\sT}\) is finite, then
\[
\bigl(\operatorname{Rep}_d(A)/\!\!/ \operatorname{GL}(d)\bigr)^{\sT}
\]is finite for every \(d\geqslant 1\).
\end{Lemma}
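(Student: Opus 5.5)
The plan is to build a finite, $\sT$-equivariant morphism
\[
\operatorname{Rep}_d(A)/\!\!/\GL(d)\longrightarrow \bigl(\operatorname{Spec}Z\bigr)^{\times d}/S_d \quad\text{(or a closely related target)}
\]
whose target has finitely many $\sT$-fixed points. Finite fibres then force finiteness of the fixed locus of the source. Concretely, I would use the morphism $\operatorname{Rep}_d(A)/\!\!/\GL(d)\to \operatorname{Spec}(Z')$ induced by the ring map $\theta:\C[\operatorname{Spec}Z']\to \C[\operatorname{Rep}_d(A)]^{\GL(d)}$. Here $Z'$ is generated by the coefficients of characteristic polynomials $z\mapsto \operatorname{char}(\rho(z))$ for $z\in Z$. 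The cleanest formulation: send $z\in Z$ to the invariant function $\rho\mapsto \tr\rho(z)^k$. This defines a $\sT$-equivariant algebra map
\[
\Phi:\mathcal{S}:=\operatorname{Sym}^d(Z)^{\text{(as functions on }(\operatorname{Spec}Z)^d/S_d)}\to \C[\operatorname{Rep}_d(A)]^{\GL(d)}.
\]
Geometrically, a semisimple representation $\rho$ is sent to the multiset of central characters of $Z$ on its composition factors (with multiplicity), i.e.\ to a point of $(\operatorname{Spec}Z)^d/S_d$. I would check $\sT$-equivariance directly, since $\sT$ acts on $A$ preserving $Z$ and acts on representations by twisting.

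The key step is \emph{finiteness} of this map, i.e.\ that $\C[\operatorname{Rep}_d(A)]^{\GL(d)}$ is a finitely generated module over the image of $\Phi$. By Procesi's theorem (characteristic $0$), the invariant ring is generated by functions $\rho\mapsto\tr\rho(a)$ for $a\in A$. Since $A$ is a finite $Z$-module, write $A=\sum_j Zm_j$. Each $a$ then satisfies a monic polynomial over $Z$ by Cayley--Hamilton for finite modules. Consequently $\rho(a)$ has eigenvalues integral over the algebra generated by the $\rho(z)$, so $\tr\rho(a)$ is integral over $\operatorname{im}\Phi$. Finite generation of the invariant ring (Hilbert), together with integrality, gives a finite morphism. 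The fibres of a finite morphism are finite sets.

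Now take a $\sT$-fixed point $x$ of $\operatorname{Rep}_d(A)/\!\!/\GL(d)$. Its image is $\sT$-fixed in $(\operatorname{Spec}Z)^d/S_d$. A $\sT$-fixed unordered $d$-tuple is permuted by the connected group $\sT$, so each entry is fixed. Hence it is a multiset drawn from the finite set $(\operatorname{Spec}Z)^{\sT}$, and there are finitely many such multisets. Each fibre is finite, so the fixed locus is finite. Here $Z$ finitely generated ensures $\operatorname{Spec}Z$ is of finite type, so that all of these are varieties.

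The main obstacle I expect is the integrality step. One must show that the Cayley--Hamilton relation for $a$ over $Z$ transfers correctly to the eigenvalues of $\rho(a)$ in terms of those of the $\rho(z)$. A subtlety is that $\rho(z)$ need not be scalar on a non-semisimple $\rho$. I would handle this by working on the closed orbit, i.e.\ the semisimplification, where each simple factor has $Z$ acting by scalars (Schur). On such a point, $\tr\rho(a)$ is a root of a monic polynomial whose coefficients are polynomials in those scalars, which are functions pulled back via $\Phi$. The alternative is to argue purely with the Nullstellensatz for finite fibres: a semisimple $d$-dimensional $A$-module with prescribed central characters is a module over the finite-dimensional algebra $A\otimes_Z(Z/\mathfrak m_1\cdots\mathfrak m_d)$. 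Such an algebra has finitely many simple modules, hence finitely many semisimple $d$-dimensional modules. This fibre-finiteness alone, combined with the fixed-point count above, already suffices, and I would adopt it if the integrality bookkeeping becomes messy.
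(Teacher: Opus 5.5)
Your proof is correct. The fallback in your last paragraph is exactly the paper's argument. The paper writes a closed point of the quotient as $M=\bigoplus_j S_j^{\oplus m_j}$ and gets central characters $p_j$ from Schur's lemma. It uses connectedness of $\sT$ to place each $p_j$ in the finite set $(\operatorname{Spec}Z)^{\sT}$. It then notes that each $S_j$ is a module over the finite-dimensional algebra $A/\mathfrak m_{p_j}A$; your $A\otimes_Z Z/\mathfrak m_1\cdots\mathfrak m_d$ does the same job. So only finitely many semisimple $d$-dimensional classes are $\sT$-fixed. The paper then adds that an affine finite-type scheme with finitely many closed points is finite, and you should state that step explicitly too.

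Your primary route is genuinely different: a finite $\sT$-equivariant morphism from $\operatorname{Rep}_d(A)/\!\!/\GL(d)$ to a space of multisets of central characters, built from Procesi's theorem, Hilbert's finiteness theorem and integrality. It buys more, namely that all fibres are finite and not only those over fixed points. But it needs machinery the pointwise argument avoids entirely, and two steps need repair if you keep it.
\begin{enumerate}
\item A ring map out of $(Z^{\otimes d})^{S_d}$ sending symmetrized $z$ to $\tr\rho(z)$ is not obviously well defined, since both rings may be non-reduced. It is cleaner to take the subalgebra $\mathcal S\subset\C[\operatorname{Rep}_d(A)]^{\GL(d)}$ generated by the functions $\tr\rho(z)$, $z\in Z$. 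Alternatively, use the morphism to $\operatorname{Rep}_d(Z)/\!\!/\GL(d)$ induced by $Z\subset A$.
\item Integrality of $\tr\rho(a)$ over $\mathcal S$ is a ring-theoretic statement, and ``working on the closed orbit'' does not settle it. First, $\rho(a)$ satisfies a monic polynomial $g$ with coefficients in $\mathcal S$. This follows from Cayley--Hamilton for the commuting matrices $\rho(z)$, whose characteristic coefficients lie in $\mathcal S$ by Newton's identities. Then, for the monic $h$ whose roots are the sums of $d$ roots of $g$ taken with repetition, $h(\tr\rho(a))$ lies in every prime of $\C[\operatorname{Rep}_d(A)]$. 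Hence it is nilpotent, and a power of $h$ annihilates $\tr\rho(a)$.
\end{enumerate}
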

\begin{proof}
By assumptions, $A$ is a finitely generated $\C$-algebra.
A closed point of the above affine quotient corresponds to a semisimple module: 
\[
M=\bigoplus_{j=1}^r S_j^{\oplus m_j},
\]where \(S_j\) are simple \(A\)-modules.
On each \(S_j\), every \(z\in Z\) acts by an \(A\)-linear endomorphism. Schur's lemma gives
\[
z|_{S_j}=\chi_{S_j}(z)\operatorname{id}_{S_j}.
\] This defines  an algebra homomorphism
\[
\chi_{S_j}:Z\longrightarrow\mathbb C.
\]Its kernel is a maximal ideal \(\mathfrak m_{p_j}\), corresponding to a closed point \(p_j\in\operatorname{Spec}Z\).
Since $[M]$ is $\sT$-fixed, $\{p_j\}_{j=1}^r$ is $\sT$-invariant. Since $\sT$ is
connected, each $p_j$ must be $\sT$-fixed, and belongs to the finite set \((\operatorname{Spec}Z)^{\sT}\).

Since $\mathfrak m_{p_j}$ annihilates $S_j$,  $S_j$ descends to a module of $A_{p_j}:=A/\mathfrak m_{p_j}A$.
Since \(A\) has finitely many generators over \(Z\), their images span \(A_{p_j}\) over \(Z/\mathfrak m_{p_j}=\mathbb C\), hence $A_{p_j}$
is a finite dimensional algebra, which therefore has finitely many simple modules up to isomorphisms. 
By fixing the dimension of representations, we get finitely many torus fixed semisimple isomorphism classes.
The fixed locus is affine of finite type over \(\mathbb C\). Finitely many closed points imply that its coordinate ring is Artinian and finite-dimensional, so the fixed scheme is finite as well.
\end{proof}
\begin{Remark}\label{rmk on dim vect version}
Under the hypotheses of Lemma \ref{lem on finite number of affine torus fixed}, suppose that $1=\sum_{i\in I}e_i$
is a finite decomposition of the identity into pairwise orthogonal idempotents
fixed by $\sT$. For $\mathbf v=(\mathbf v_i)_{i\in I}\in\mathbb N^I$,
let $\operatorname{Rep}_{\mathbf v}(A)$ denote the scheme of
representations on $\bigoplus_{i\in I}\mathbb C^{\bv_i}$ in which
$e_i$ acts as the projection onto $\mathbb C^{\bv_i}$, and put $\sG_{\mathbf v}=\prod_{i\in I}\operatorname{GL}(\bv_i)$.
Then
\[
\bigl(\operatorname{Rep}_{\mathbf v}(A)/\!\!/ \sG_{\mathbf v}\bigr)^\sT
\]
is finite for every $\mathbf v\in \mathbb N^I$.
Indeed, its closed points correspond to $\sT$-fixed isomorphism
classes of semisimple $A$-modules of dimension vector
$\mathbf v$. The proof of Lemma \ref{lem on finite number of affine torus fixed} shows that their simple
constituents belong to a finite list $S_1,\ldots,S_q$.
Their multiplicities $m_j$ satisfy
\[
\sum_{j=1}^q m_j\dim_{\mathbb C}(e_iS_j)=\bv_i
\quad\text{for every }i\in I.
\]
Summing over $i$ bounds every $m_j$, since each $S_j$ has
positive dimension. Hence there are only finitely many
such semisimple modules. The same finite-type argument as in
Lemma~5.3 proves finiteness of the $\sT$-fixed scheme.
\end{Remark}

\begin{Proposition}\label{lem on finiteness gen conifolds}
Let $(Q,\sW)$ be the quiver with potential for generalized conifolds \S \ref{sect on QP gene conifold}, $A=\operatorname{Jac}(Q,\sW)$ the Jacobi algebra, 
and $\sT$ be the flavor torus \eqref{equ on flavor torus conifold}. 
Then $\bigl(\operatorname{Rep}_\bv(A)/\!\!/ \operatorname{GL}(\bv)\bigr)^{\sT}$is finite for every \(\bv\in \mathbb{N}^{Q_0}\).
\end{Proposition}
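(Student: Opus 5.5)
The plan is to apply Remark \ref{rmk on dim vect version}, with the vertex idempotents $e_i$ as the $\sT$-fixed orthogonal idempotents. Since $\sT$ scales arrows, it fixes each $e_i$. It then suffices to find a $\sT$-stable, finitely generated central subalgebra $Z\subseteq A$ such that $A$ is a finite $Z$-module and $(\Spec Z)^{\sT}$ is finite.

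For $Z$ I would take the subalgebra generated by the natural cycle-sums, which is the center coming from the toric geometry. For a generalized conifold, the center of the Jacobi algebra is $\C[x,y,z,w]/(xy-z^mw^n)$. Concretely, set
\[
x=\sum_{a}X_{a+m+n-1}\cdots X_{a+1}X_a, \qquad y=\sum_a Y_a Y_{a+1}\cdots Y_{a+m+n-1}
\]
(the cycles going once around, starting at each vertex), and let $z,w$ be the sums over vertices of the $2$-cycles $Y_aX_a$ or the loops $\Phi_a$, chosen according to the sign $\sigma$. I would use the F-term relations $\partial_{\Phi_a}\sW=0$, i.e. $Y_aX_a=X_{a-1}Y_{a-1}$ when $\sigma(a)=\sigma(a+1)$, together with $\partial_{X}\sW=\partial_Y\sW=0$, to show that these elements are central and satisfy $xy=z^mw^n$. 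Finiteness of $A$ over $Z$ is the standard fact that this Jacobi algebra is a noncommutative crepant resolution (cf.\ \cite{MN, N}): $A\cong\End_R(\bigoplus_i M_i)$ for finitely many rank-one reflexive modules $M_i$ over the normal toric ring $R=Z$. I would cite this rather than reprove it. A more hands-on alternative is to show that every path is, modulo relations, a product of a central monomial and one of finitely many short paths between each pair of vertices.

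Next comes the torus computation. With the weights fixed in \eqref{equ on flavor torus conifold}, $x$ has weight $(m+n)\epsilon$. The arrow $Y_a$ has weight $\sigma(a+1)h-\epsilon$, so $y$ has weight $(m-n)h-(m+n)\epsilon$. The classes $z$ and $w$ have weights $\pm h$; more precisely, $Y_aX_a$ has weight $\sigma(a+1)h$ and $\Phi_a$ has weight $-\sigma(a)h$, and these are consistent with centrality. A $\sT$-fixed point of $\Spec Z\subset\C^4$ must have each coordinate of nonzero weight equal to zero. Since $x,y,z,w$ all have nonzero weights under $\C^*_h\times\C^*_\epsilon$ (using $m+n>0$), the only fixed point is the origin. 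Hence $(\Spec Z)^\sT$ is a single point, and Lemma \ref{lem on finite number of affine torus fixed} together with Remark \ref{rmk on dim vect version} gives the claim.

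I expect the main obstacle to be the middle step: checking centrality of the chosen generators and finiteness of $A$ over $Z$ uniformly in the sign pattern $\sigma$. That step is where the relations need care, particularly at vertices with $\sigma(a)\ne\sigma(a+1)$, where the relation reads $X_{a}Y_aX_a\cdots$ (quartic terms) instead of a loop relation. If the citation to the NCCR description is not clean, I would instead prove directly that $A$ is finite over $Z$. The argument would use the grading by $\sT$-weights together with the toric description: paths modulo relations correspond to lattice points in cones, and $Z$ corresponds to the interior semigroup, which has finite index.
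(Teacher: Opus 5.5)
Your proposal is correct and follows essentially the same route as the paper. You take $Z=\C[x,y,z,w]/(xy-z^mw^n)$ as a central subalgebra over which $A$ is finite, check that $x,y,z,w$ carry the nonzero $\sT$-weights $N\epsilon$, $(m-n)h-N\epsilon$, $\pm h$, so that $(\Spec Z)^{\sT}=\{0\}$, and conclude with Lemma \ref{lem on finite number of affine torus fixed} and Remark \ref{rmk on dim vect version}. The paper gets centrality and finiteness from $A\cong\End_Y(\mathcal E)$ via \cite[Thm.~1.20]{N} and properness of the resolution, rather than from the equivalent NCCR form $\End_R(\bigoplus M_i)$ you cite, so your optional direct verification via F-term relations is not needed.
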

\begin{proof}
Let
\[
Z=\mathbb C[x,y,z,w]/(xy-z^mw^n).
\]
By \cite[Thm.~1.20]{N}, we have 
\[
A\cong\operatorname{End}_{Y}(\mathcal E),
\]where \(\mathcal E\) is a vector bundle on a resolution
\[
f:Y\longrightarrow X=\operatorname{Spec}Z.
\]
We claim that $Z$ is central in $A$ and $A$ is finite over $Z$. 
For \(r\in Z\), multiplication by \(f^*r\) gives an endomorphism of \(\mathcal E\). Every \(\mathcal O_{Y}\)-linear endomorphism commutes with scalar multiplication, 
this induces a homomorphism 
\[
Z\longrightarrow Z(A). \]
Since \(f\) is dominant and \(X\) is integral, pullback \(Z\to\Gamma(Y,\mathcal O_Y)\) is injective; since \(E\) is locally free of positive rank, scalar multiplication \(\Gamma(Y,\mathcal O_Y)\to\operatorname{End}_Y(E)\) is injective. Their composite is the above homomorphism, which is therefore injective.  
Furthermore, properness of \(f\) implies that $f_*\mathcal End(\mathcal E)$ is coherent on the affine scheme \(X\). Therefore the set $A$ of global sections is a finite \(Z\)-module. 

To show \((\operatorname{Spec}Z)^{\sT}\) is finite, recall the torus weight \eqref{equ on flavor torus conifold}.
Under the toric description \cite[Proof of Prop.~1.16]{N}, the induced weights on the generators of $Z$ are given as follows:  
\(x\) is represented by the full cycle of \(X\)-arrows, giving weight \(N\epsilon\) $(N=m+n>0)$; \(y\) is the full reverse cycle, whose weight is the sum of all \(Y\)-weights, i.e. 
$(m-n)h-N\epsilon$; the representatives of \(z,w\) have weights \(h,-h\) respectively. 
Hence all four characters are nonzero. 

If \(p\in (\operatorname{Spec}Z)\) is $\sT$-fixed and \(f\) is an eigenfunction of nonzero character \(\chi\), then
\[
f(p)=(t\cdot f)(p)=\chi(t)f(p), \,\, \forall\,\,t, \]
forcing \(f(p)=0\). Applying this to the four generators gives
\[
x(p)=y(p)=z(p)=w(p)=0.
\]Therefore $(\operatorname{Spec}Z)^{\sT}=\{0\}$.
Then the claim follows from Lemma \ref{lem on finite number of affine torus fixed} and Remark \ref{rmk on dim vect version}.
\end{proof}

\begin{Proposition}\label{lem on finiteness poly}
Let $(Q,\sW)$ be the quiver with potential for $D$-type subgroups of $SO(3,\C)$ \S \ref{sect on finite subgroup SO(3)}, $A=\operatorname{Jac}(Q,\sW)$ the Jacobi algebra, 
and $\sT$ be the flavor torus \eqref{equ on flavor torus Dn} (and \eqref{equ on flavor torus D4} in $D_4$ case). 
Then $\bigl(\operatorname{Rep}_\bv(A)/\!\!/ \operatorname{GL}(\bv)\bigr)^{\sT}$is finite for every \(\bv\in \mathbb{N}^{Q_0}\).
\end{Proposition}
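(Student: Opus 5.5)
The plan is to mirror the proof of Proposition \ref{lem on finiteness gen conifolds}. We apply Lemma \ref{lem on finite number of affine torus fixed} together with Remark \ref{rmk on dim vect version} to $A=\operatorname{Jac}(Q,\sW)$, using the vertex idempotents $e_i$, which are fixed by $\sT$. The central subalgebra will be $Z=\C[x,y,z]^G$, the invariant ring of the dihedral group $G<SO(3,\C)$.

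\textbf{Step 1: identify $A$ and $Z$.} By Ginzburg's result recalled before \S\ref{sect on finite subgroup SO(3)} (\cite[\S 4.4]{Gin}), the Jacobi algebra of the McKay quiver with its cubic potential is Morita equivalent to the skew group algebra $\C[x,y,z]\rtimes G$. More precisely, $A\cong \operatorname{End}_{\C[x,y,z]^G}(\C[x,y,z]\otimes \rho)$-type endomorphism algebras, or equivalently $A\cong e(\C[x,y,z]\rtimes G)e$ for a full idempotent $e$ that picks one copy of each irreducible representation (\cite{NS}). Under this identification, $Z=\C[x,y,z]^G$ sits in the center of $\C[x,y,z]\rtimes G$, and hence maps into $Z(A)$. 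The algebra $Z$ is finitely generated by Noether's theorem. Moreover, $\C[x,y,z]$ is a finite module over $Z$, so $\C[x,y,z]\rtimes G$ is finite over $Z$, and therefore so is $A$.

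\textbf{Step 2: transport the torus action.} The flavor torus $\sT$ must be realized on the $\C[x,y,z]$ side, commuting with $G$, so that $Z$ is $\sT$-stable and the identification is $\sT$-equivariant. For $D$-type, $G=D_{2n}$ preserves the decomposition $\C^3=\C^2\oplus \C\cdot z$, where $z$ is the rotation axis. We would let $\C^*_t$ act on $\C^2$ with weight $-1$ and on $z$ with weight $2$; since $G$ acts on $z$ by $\pm1$ and $\det=1$, the cubic potential is preserved. One then has to check that this matches the weights in \eqref{equ on flavor torus Dn}, namely weight $2$ on $C_+$ (the arrows built from the $z$-direction) and $-1$ on $C_-$. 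In the $D_4$ case, $G=(\bZ/2)^2$ is diagonal, so the action of the full two-dimensional torus \eqref{equ on flavor torus D4} on $x,y,z$ commutes with $G$. We expect this equivariance check to be the main obstacle, because the paper's conventions from \cite{NS} for which arrows come from which coordinate must be matched carefully. If a direct match is awkward, we would instead take as $Z$ the subalgebra of $Z(A)$ generated by explicit cycles, such as traces of loops and two-cycles, and verify its weights directly.

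\textbf{Step 3: the fixed points of $\operatorname{Spec}Z$.} Any $\sT$-fixed point of $\operatorname{Spec}Z=\C^3/G$ is the image of a $\sT$-fixed point of $\C^3$. This holds because the quotient map is finite and $\sT$-equivariant, and $\sT$ is connected: each fiber is a finite $\sT$-stable set, so $\sT$ fixes every point of a fixed fiber. In the $D_{2n}$ case all weights $-1,-1,2$ are nonzero, so $(\C^3)^{\sT}=\{0\}$. In the $D_4$ case, the torus \eqref{equ on flavor torus D4} acts on $x,y,z$ by the three characters $t_x,t_y,t_z$. These are nontrivial on $\{t_xt_yt_z=1\}$, so again the only fixed point is $0$. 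Alternatively, one can argue as in Proposition \ref{lem on finiteness gen conifolds}: the homogeneous generators of $Z$ have nonzero characters, so they all vanish at a fixed point. Hence $(\operatorname{Spec}Z)^{\sT}$ is a single point, and Lemma \ref{lem on finite number of affine torus fixed} with Remark \ref{rmk on dim vect version} gives the claim.
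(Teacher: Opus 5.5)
Your proposal is correct and follows essentially the same route as the paper. Both arguments use Ginzburg's identification $A\cong e(\C[u,v,w]\# G)e$, take the central, finitely generated subalgebra $Z=\C[u,v,w]^G$ over which $A$ is finite, and show that $(\operatorname{Spec}Z)^{\sT}$ is the single point $\pi(0)$. That last step uses the finite $\sT$-equivariant quotient map, connectedness of $\sT$, and the fact that $\sT$ fixes only the origin of $\C^3$; in the $D_4$ case the paper uses the subtorus $(t^{-1},t^{-1},t^2)$ for this. The equivariance check you flag as the main obstacle is only asserted in the paper: $\sT$ commutes with $G$ and acts trivially on $\C G$, hence fixes $e$. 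Your matching of the weight-$2$ arrows $C_+$ with the $z$-direction, i.e.\ the arrows joining $\rho$ to $\rho\otimes\chi$, is consistent with the quivers in \S\ref{sect on QP of polyhedra}.
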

\begin{proof}
Let 
\[G=D_{2n}< SO(3,\mathbb C),\quad Z=\mathbb C[u,v,w]^G, \quad \C G=\bigoplus_{i\in I}\mathrm{End}(\rho_i), \quad I=\operatorname{Irr}(G), \]
where $\rho_i$ are irreducible complex representations of \(G\). Denote 
$e_i\in \C G$ to be a minimal idempotent in the simple direct summand $\mathrm{End}(\rho_i)$ (so that $\C G\cdot e_i=\rho_i$). Write $e=\sum_{i\in I}^{}e_i$.
By \cite[Thm.~4.4.6 and its proof]{Gin}, we have 
\begin{equation}\label{mckay corr}
A\cong e(\mathbb C[u,v,w]\#G)e,
\end{equation}
where \(\mathbb C[u,v,w]\#G\) is the skew-group algebra. 
Elements in \(\mathbb C[u,v,w]\#G\) are sums
\[
\sum_{g\in G}f_g[g],\quad f_g\in C[u,v,w],
\]and multiplication is defined by
\begin{equation}\label{equ on mult of skew gp}(f[g])(h[k])=f\,g(h)[gk], \end{equation}
where $g(h)$ means polynomial obtained by applying the $g$ action to \(h\), the brackets distinguish the formal group element \([g]\) from its action on polynomials.
The flavor torus $\sT$ commutes with \(G\), acts trivially on \(\mathbb CG\), and therefore fixes \(e\) and the vertex idempotents.
It makes the isomorphism \eqref{mckay corr} $\sT$-equivariant. 

We first claim that \(Z\) is central in \(\mathbb C[u,v,w]\#G\): a $G$-invariant polynomial commutes with all polynomials and with all group elements. Indeed,
for any $z\in \mathbb C[u,v,w]^G$, $g(z)=z$, by \eqref{equ on mult of skew gp}, we have 
\[
[g] z=g(z)[g]=z[g].
\]
The embedding of central subalgebra $Z$ is given by 
\[
Z\hookrightarrow A,\quad z\mapsto ze,
\]
which is injective because \(e\) is a nonzero constant element of \(\mathbb CG\). 

We claim that \(\mathbb C[u,v,w]\) is a finite module over \(\mathbb C[u,v,w]^G\). In fact every \(f\in \mathbb C[u,v,w]\) satisfies the monic polynomial
\[
\prod_{g\in G}(s-g(f))\in \mathbb C[u,v,w]^G[s],
\]hence \(\mathbb C[u,v,w]\) is integral over \(\mathbb C[u,v,w]^G\). Combining with the fact that $\mathbb C[u,v,w]$ is generated by the three coordinates as a \(\mathbb C[u,v,w]^G\)-algebra, so the claim holds. 
Therefore
\[
\mathbb C[u,v,w]\#G=\bigoplus_{g\in G}\mathbb C[u,v,w][g]
\]is also finite over \(Z\). The map
\[
\mathbb C[u,v,w]\#G\rightarrow e(\mathbb C[u,v,w]\#G)e,\quad b\mapsto ebe
\]is \(Z\)-linear and surjective. This proves $A$ is a finite $Z$-module. 

To show \((\operatorname{Spec}Z)^{\sT}\) is finite, recall that the dihedral representation splits into a two-dimensional summand and a one-dimensional summand. 
For $D_{2n}$ with $n>2$, the relevant $\sT$-actions on coordinates are given by
\[
t\cdot u=t^{-1}u,\quad
t\cdot v=t^{-1}v,\quad
t\cdot w=t^2w. \]
Therefore $(\operatorname{Spec}\mathbb C[u,v,w])^{\sT}=\{0\}$.
Since $\sT$ commutes with \(G\), it preserves \(\mathbb C[u,v,w]^G\), so we have a finite, surjective, $\sT$-equivariant quotient map
\[
\pi:\operatorname{Spec}\mathbb C[u,v,w]\longrightarrow\operatorname{Spec}\mathbb C[u,v,w]^G.
\]If \(p\) is a torus-fixed closed point in the target, then \(\pi^{-1}(p)\) is a nonempty finite \(\sT\)-invariant set of closed points. Connectedness of \(\sT\) forces every point in this fiber to be fixed. Since $(\operatorname{Spec}\mathbb C[u,v,w])^{\sT}=\{0\}$, so $p=\pi(0)$. Therefore 
\[
(\operatorname{Spec}\mathbb C[u,v,w]^G)^\sT=\{\pi(0)\}.
\]
For \(D_4\) case, the larger torus is $\sT=\{(t_x,t_y,t_z)\in (\C^*)^3\,|\,t_x t_y t_z=1 \}$. 
It contains the subgroup \(\{(t^{-1},t^{-1},t^2)\,|\,t\in \C^*\}\), so its fixed locus is likewise just the image of the origin.
Then the claim follows from Lemma \ref{lem on finite number of affine torus fixed} and Remark \ref{rmk on dim vect version}.
\end{proof}


\appendix 

\section{Recognition by Lie superalgebras}\label{sect on appendix A}

Based on the physics literature, e.g.~\cite[\S 4.1, Example 1 and Appendix A]{Bao}, quivers with potentials studied in \S \ref{sect on QP gene conifold} and  \S \ref{sect on QP of polyhedra} correspond\footnote{There are different choices of quivers with potentials for the same Lie superalgebra. Different choices are related to each other by Seiberg duality, or 
mutations of quivers with potentials, see e.g.\cite[\S 4.2]{Bao}.} to
(some) untwisted affine superalgebras of type $A,B,D$ and exceptional type: 
\[\begin{aligned}
\{xy=z^mw^n\}\subseteq \mathbb{C}^4
&\longleftrightarrow A(m-1,n-1)^{(1)}, \\
\C^3/D_{4m+2}
&\longleftrightarrow \mathfrak{osp}(3\mid 2m)^{(1)}, \,\,\, m \geqslant 1, 
&&\mathrm{i.e.}\,\, B(1,m)\emph{-type}, \\ 
\C^3/D_{4m}
&\longleftrightarrow \mathfrak{osp}(4\mid 2m)^{(1)},\,\,\, m \geqslant 2,
&&\mathrm{i.e.}\,\, D(2,m)\emph{-type},  \\
\C^3/D_{4} &\longleftrightarrow D(2,1,\alpha)^{(1)}. \\
\end{aligned}\] 
It would be interesting to work out the Lie superalgebras constructed from \cite{COZZ2} and compare them with 
the above affine superalgebras of type $A,B,D$ and exceptional type.

It is unclear to the author which Lie superalgebras, if any, correspond to the $E$-type quivers with potentials associated with the Tetrahedral group $A_4$, Octahedral group $S_4$ and Icosahedral group $A_5$ in $SO(3,\C)$.

\section{Injectivity in the case of $\mathfrak{osp}(2\mid 2m)^{(1)}$ with $m\geqslant 2$}\label{sect on appendix B}

For untwisted affine superalgebras of type $C$, i.e. 
$$\mathfrak{osp}(2\mid 2m)^{(1)}=C(m+1)^{(1)}, \,\,m\geqslant 2, $$ 
their quivers with potentials $(Q,\sW)$ are 
given as below: \\

\newcommand{\OspRank}{0}
\newcommand{\QuiverScale}{0.9}

\tikzset{
  vertex/.style={inner sep=2pt,outer sep=0pt},
  arr/.style={
    -{Latex[length=1.6mm]},
    line width=0.5pt
  },
  lab/.style={font=\small,inner sep=1.5pt},
  adjoint/.style={
    arr,loop above,out=120,in=60,
    min distance=10mm,looseness=6
  }
}

\newcommand{\ArrowPair}[4]{%
  \draw[arr] (#1) to[bend left=14]
    node[lab,above] {$#3$} (#2);
  \draw[arr] (#2) to[bend left=14]
    node[lab,below] {$#4$} (#1);
}

\begin{center}
\scalebox{\QuiverScale}{%
\begin{tikzpicture}[x=1cm,y=1cm]

  \node[vertex] (q0) at (0,1.3)  {$0$};
  \node[vertex] (qp) at (0,-1.3) {$0'$};
  \node[vertex] (q1) at (2.3,0)  {$1$};

  \draw[arr] (q0) to[bend left=13]
    node[lab,right] {$a$} (qp);
  \draw[arr] (qp) to[bend left=13]
    node[lab,left] {$A$} (q0);

  \draw[arr] (q0) to[bend left=13]
    node[lab,above,sloped] {$c$} (q1);
  \draw[arr] (q1) to[bend left=13]
    node[lab,below,sloped] {$C$} (q0);

  \draw[arr] (qp) to[bend left=13]
    node[lab,above,sloped] {$d_0$} (q1);
  \draw[arr] (q1) to[bend left=13]
    node[lab,below,sloped] {$D_0$} (qp);

  \ifnum\OspRank=0\relax

     \node[vertex] (q2)  at (4.5,0) {$2$};
    \node[vertex] (qm1) at (7.2,0) {$m-1$};
    \node[vertex] (qm)  at (9.6,0) {$m$};

    \ArrowPair{q1}{q2}{d_1}{D_1}
    \draw[dashed] (q2.east) -- (qm1.west);
    \ArrowPair{qm1}{qm}{d_{m-1}}{D_{m-1}}

    \foreach \v/\s in {q1/1,q2/2,qm1/{m-1},qm/m} {
      \path (\v) edge[adjoint]
        node[lab,above] {$u_{\s}$} (\v);
    }

  \else

     \foreach \j in {2,...,\OspRank} {
      \pgfmathtruncatemacro{\i}{\j-1}
      \node[vertex] (q\j)
        at ({2.3+2.2*(\j-1)},0) {$\j$};
      \ArrowPair{q\i}{q\j}{d_{\i}}{D_{\i}}
    }

    \foreach \i in {1,...,\OspRank} {
      \path (q\i) edge[adjoint]
        node[lab,above] {$u_{\i}$} (q\i);
    }

  \fi

\end{tikzpicture}%
}
\end{center}

\[
\begin{aligned}
\sW={}&
 \lambda_+ C d_0 a
 +\lambda_- A D_0 c +\mu_0 u_1 cC
 +\mu_1 u_1 d_0D_0 +\sum_{i=1}^{m-2}
 \left(
   \alpha_i u_i D_i d_i
   +\beta_i u_{i+1}d_iD_i
 \right)\\
&+\alpha_{m-1}u_{m-1}^{\,2}D_{m-1}d_{m-1}
 +\beta_{m-1}u_m d_{m-1}D_{m-1}.
\end{aligned}
\]
where $\lambda_\pm$, $\mu_0$, $\mu_1$, $\alpha_i$, $\beta_i\in \mathbb C^*$.

Fix a dimension vector $\bv\in \mathbb{N}^{Q_0}$.
The representation space of the quiver $Q$ (without framing) is 
$$R=R(\bv):=\underbrace{\bigoplus_{a\in C_+}\Hom(\bC^{\mathbf v_{t(a)}},\bC^{\mathbf v_{h(a)}})}_{P}\oplus 
\underbrace{\bigoplus_{a\in C_-}\Hom(\bC^{\mathbf v_{t(a)}},\bC^{\mathbf v_{h(a)}})}_{E}, $$
where $$C_+:=\big\{a, A,u_1,\ldots, u_{m} \big\},\quad C_-:=\left\{a\in Q_1\,|\,a \notin  C_+\right\}. $$
We take the gauge group  
$$\sG:=\prod_{i\in Q_0}\mathrm{GL}(\bv_i), $$ 
and one dimensional flavor torus:
$$ \sT:=\C^*_{t}, $$
which acts on arrows $a, A,u_1,\ldots,u_{m-1}$ by weight $2$, $d_{m-1},D_{m-1}$ by weight $-2$, $u_m$ by weight $4$, all remaining by weight $-1$.
It  preserves the potential $\sW$ and $\sw:=\tr\sW$. 

Although $(Q,\sW)$ above is different from those $D$-type quivers with potentials \S \ref{sect on QP of polyhedra}, we observe that injectivity 
argument in \textit{loc.\,cit.} can be applied to this case.

\begin{Theorem}\label{prop on inj C type super}
Notations as above, the specialization map 
$$\mathsf{sp}: H^{\sT\times \sG}(R,\sw) \to  H^{\sT\times \sG}(R)$$
is injective after localization with respect to the $\sT$-action.
\end{Theorem}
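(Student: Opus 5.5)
The plan is to run the proof of Theorem \ref{prop on inj polyhedral} essentially verbatim. Two points genuinely differ and need checking. First, the potential is no longer linear in $P$: the monomial $u_{m-1}^{2}D_{m-1}d_{m-1}$ contains two arrows of $C_+$. Second, the weights on $C_+$ are not uniform, since $u_m$ has weight $4$ while $a,A,u_1,\ldots,u_{m-1}$ have weight $2$. Throughout, write $P$, $E$ for the $C_+$ and $C_-$ parts of $R$, as in the statement.

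\emph{Step 1 (structure of $\sw$).} One checks monomial by monomial that every term of $\sW$ contains exactly two arrows of $C_-$. Hence $\sw(p,e)$ is homogeneous of degree $d=2$ in $e\in E$, with arbitrary (here polynomial) dependence on $p\in P$. This is exactly the hypothesis of Proposition \ref{base change of van cycle} with $B=P$. One also checks $\sT$-invariance; for instance $2\cdot 2-2-2=0$ for $u_{m-1}^2D_{m-1}d_{m-1}$ and $4-2-2=0$ for $u_m d_{m-1}D_{m-1}$. Linearity in $p$ was never used in the proof of Theorem \ref{prop on inj polyhedral}. It only needed a quadratic form on $E$ after restricting to a point $p$, so the non-linearity is harmless.

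\emph{Step 2 (reduction to the nilpotent locus).} The quiver $Q_+$ with arrows in $C_+$ consists of the $2$-cycle $a,A$ on $\{0,0'\}$ together with the loops $u_1,\ldots,u_m$. Let $P_{\mathrm{nil}}$ be its nilpotent locus. The complement $P\setminus P_{\mathrm{nil}}$ is covered by the loci $\{\tr((Aa)^k)\neq 0\}$ and $\{\tr(u_i^k)\neq 0\}$. Each of these functions is $\sG$-invariant of nonzero $t$-weight ($4k$, $2k$, or $4k$ for $u_m$). The argument of Lemma \ref{lem on nilp} then kills the localized cohomology of these loci. Combined with excision for vanishing cycles, which follows from Proposition \ref{base change of van cycle}, this gives the analogue of diagram \eqref{diag on red to nilp}.

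\emph{Step 3 (orbits and lift of the torus).} Stratify $P_{\mathrm{nil}}$ by the finitely many $\sG$-orbits of graded Jordan chains. The Levi decomposition $\mathrm{stab}_{\sG\times\sT}(p)=U_p\rtimes(L\times\widetilde\sT)$ is then constructed exactly as before. The one real modification, and the step I expect to be the main obstacle, is the choice of lift $g(t)$. It must satisfy two requirements:
\begin{itemize}
\item it conjugates $t\cdot p$ back to $p$;
\item all resulting $\widetilde\sT$-weights on $E$ are nonzero.
\end{itemize}
Only the second requirement is what makes the quadratic form $\sw_p$ pair $E_+$ with $E_-$.

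On chains of weight-$2$ arrows, set $g(t)e_k=t^{-2k}e_k$. On the $u_m$-chain at vertex $m$, set $g(t)e_l=t^{-4l+1}e_l$; the extra factor $t^{1}$ is a scalar on vertex $m$, so it commutes with $u_m$ and does not disturb the first requirement. The resulting weights are as follows:
\begin{itemize}
\item A weight $-1$ arrow of $C_-$ joining positions $k$ and $l$ in chains of the weight-$2$ type gets weight $2(k-l)-1$.
\item $d_{m-1}$ gets weight $-2+(1-4l)+2k$.
\item $D_{m-1}$ gets weight $-2-(1-4l)-2k$.
\end{itemize}
All of these are odd, hence nonzero. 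Without the shift, $d_{m-1}$ and $D_{m-1}$ would get weight $-2-4l+2k$ and its analogue, which are even and can vanish; this is why the shift is needed. Since the shift lies in the center of $L\times\widetilde\sT$ restricted to vertex $m$, $\widetilde\sT$ still maps isomorphically onto $\sT$ and $L$ still commutes with it.

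\emph{Step 4 (conclusion).} With all weights odd and nonzero, write $\sw_p(e_+,e_-)=\langle e_+,s_p(e_-)\rangle$. Dimensional reduction as in Lemma \ref{lem on dim red} identifies the specialization map on the orbit with $\iota_*\colon H^{L\times\widetilde\sT}(\ker s_p)\to H^{L\times\widetilde\sT}(E_-)$. The composite $\iota^*\iota_*$ is multiplication by $e(N_p)=c\,t^{r}+\mathrm{l.o.t.}$ with $c\in\Q^*$, hence injective. Both sides are concentrated in even degree, so the excision sequences split in even degrees. Induction over the strata then gives injectivity on $P_{\mathrm{nil}}\times E$ after inverting $t$, and Step 2 transfers this to $R$.
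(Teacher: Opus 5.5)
Your proposal is correct and follows the paper's approach: the paper's proof simply says the argument of Theorem~\ref{prop on inj polyhedral} carries over, because it only uses that $\sw$ is quadratic in the $C_-$ variables. Two of your details go beyond the paper's one-line proof and are genuinely needed. First, you use the weight-homogeneous functions $\tr((Aa)^k)$ and $\tr(u_i^k)$ for the nilpotent reduction, since $\tr(p_\Sigma^k)$ mixes $t$-weights $2k$ and $4k$. Second, you shift the lift by $t$ on vertex $m$: with the unshifted lift, some $\widetilde\sT$-weights on $d_{m-1},D_{m-1}$ can vanish (for example $-2-4l+2k=0$ at $k=1$, $l=0$), and then the splitting $E=E_+\oplus E_-$ would fail.
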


\begin{proof}
The proof of Theorem \ref{prop on inj polyhedral} applies here as it uses the fact that 
the function $\sw$ is quadratic with respect to variables from $C_-$ (and does not use the linearity in variables from $C_+$), which holds in the above setting. 
\end{proof}
\begin{Remark}
The same argument also applies to the case of $\mathfrak{osp}(1\mid 2m)^{(1)}$: \\
\begin{center}
\resizebox{0.53\linewidth}{!}{%
\begin{tikzpicture}[every node/.style={inner sep=2pt}]

\node (q0) at (0,0) {$0$};
\node (q1) at (2.2,0) {$1$};
\node (q2) at (4.4,0) {$2$};
\node (qm1) at (7.3,0) {$m-1$};
\node (qm) at (9.5,0) {$m$};

\draw[->] (q0) to[bend left=16]
  node[above] {$d_0$} (q1);
\draw[->] (q1) to[bend left=16]
  node[below] {$D_0$} (q0);

\draw[->] (q1) to[bend left=16]
  node[above] {$d_1$} (q2);
\draw[->] (q2) to[bend left=16]
  node[below] {$D_1$} (q1);

\draw[dashed] (q2.east) -- (qm1.west);

\draw[->] (qm1) to[bend left=16]
  node[above] {$d_{m-1}$} (qm);
\draw[->] (qm) to[bend left=16]
  node[below] {$D_{m-1}$} (qm1);

 \path[->]
  (q0) edge[loop above,min distance=12mm]
    node[above] {$u_0$} (q0)
  (q1) edge[loop above,min distance=12mm]
    node[above] {$u_1$} (q1)
  (q2) edge[loop above,min distance=12mm]
    node[above] {$u_2$} (q2)
  (qm1) edge[loop above,min distance=12mm]
    node[above] {$u_{m-1}$} (qm1)
  (qm) edge[loop above,min distance=12mm]
    node[above] {$u_m$} (qm);

 \path[->] (qm) edge[loop below,min distance=12mm]
  node[below] {$v$} (qm);

\end{tikzpicture}%
}
\end{center}

 \[
\begin{aligned}
\sW={}&
\alpha_0u_0D_0d_0+\beta_0u_1^2d_0D_0+\sum_{i=1}^{m-1}
\left(
\alpha_i u_iD_id_i
+\beta_i u_{i+1}d_iD_i
\right)
+\gamma u_mv^2.
\end{aligned}
\]
\end{Remark}


\section{An explicit example of specialization maps}\label{sect on im of sp map}

Consider $D_4$ case in \S \ref{sect on QP of polyhedra} with dimension vector $\bv=(1,1,1,0)$. 
Then the quiver with potential reduces to 

\tikzset{
  dq vertex/.style={
    inner sep=2.2pt,
    outer sep=0pt,
    font=\normalsize
  },
  dq arrow/.style={
    -{Latex[length=1mm,width=1mm]},
    line width=0.65pt,
    shorten >=1pt,
    shorten <=1pt
  },
  dq label/.style={
    font=\normalsize,
    inner sep=2pt
  }
}

\newcommand{\DFourQuivers}{%
\begin{tikzpicture}[x=1cm,y=1cm]

  \node[dq vertex] (z)  at (0,1.8)   {$0$};
  \node[dq vertex] (zp) at (-3,-2) {$0'$};
  \node[dq vertex] (q)  at (3,-2)  {$1$};

  \draw[dq arrow] (z) to[bend right=8]
    node[dq label,left] {$a$} (zp);
  \draw[dq arrow] (zp) to[bend right=8]
    node[dq label,right] {$A$} (z);

  \draw[dq arrow] (z) to[bend left=8]
    node[dq label,right] {$b$} (q);
  \draw[dq arrow] (q) to[bend left=8]
    node[dq label,left] {$B$} (z);

  \draw[dq arrow] (zp) to[bend right=7]
    node[dq label,below] {$d$} (q);
  \draw[dq arrow] (q) to[bend right=7]
    node[dq label,above] {$D$} (zp);

\end{tikzpicture}%
}

\newcommand{\DFourPotentials}{%
\begin{aligned}
  \sW={}&ADb-Bda.
\end{aligned}%
}

 \begin{center}
  \scalebox{0.68}{\DFourQuivers}
\end{center}

\begin{equation}
  \DFourPotentials
\end{equation}
The representation space is 
$$R=\Hom(\C,\C)^{\oplus 6}=\mathbb{C}^6,$$
with gauge group  
\[
\sG\cong(\C^*)^3, 
\]
and flavor group
\[
\sT=\{(t_x,t_y,t_z)\in(\C^*)^3:t_xt_yt_z=1\},
\]
which acts on arrows $b,B$ by $t_x$, $d,D$ by $t_y$, and $a,A$ by $t_z$. 

Let $V_i$ denote the
tautological line bundle on $[R/\sG]$, and set
\[
u_i=c_1^{\sG\times \sT}(V_i),\quad
h_x=c_1^{\sT}(\C_x),\quad h_y=c_1^{\sT}(\C_y),\quad
h_z=c_1^{\sT}(\C_z).
\]
where $h_x+h_y+h_z=0$. Then we have 
\begin{equation}\label{eq:ring}
H^{\sG\times \sT}(R)=H^{\sG\times \sT}(\pt)
  =\Q[u_0,u_{0'},u_1,h_y,h_z].
\end{equation}
Introduce
\begin{equation}\label{eq:weights}
\begin{aligned}
p&=\mathrm{wt}_{\sG\times\sT}(d)=u_1-u_{0'}+h_y,&
q&=\mathrm{wt}_{\sG\times\sT}(D)=u_{0'}-u_1+h_y,\\
r&=\mathrm{wt}_{\sG\times\sT}(a)=u_{0'}-u_0+h_z,&
s&=\mathrm{wt}_{\sG\times\sT}(A)=u_0-u_{0'}+h_z.
\end{aligned}
\end{equation}
Then
\begin{equation}\label{eq:polynomial}
H^{\sG\times \sT}(R)\cong \Q[u_0,p,q,r,s].
\end{equation}
The specialization map 
\begin{equation}\label{equ on sp d44}
\mathsf{sp}: H^{\sG\times \sT}(R,\sw) \rightarrow H^{\sG\times \sT}(R),
\end{equation}
is $H^{\sG\times \sT}(R)$-linear so the image $I_{\sw}=\mathrm{Im}(\mathsf{sp})$ is an ideal. 
\begin{Proposition}\label{prop on Isw}
The map \eqref{equ on sp d44} is injective, and 
$$I_{\sw}=(pq,ps,rq,rs)\subset H^{\sG\times \sT}(R)\cong\Q[u_0,p,q,r,s].$$
\end{Proposition}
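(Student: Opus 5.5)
The plan is to reduce to an ordinary pushforward by dimensional reduction, exactly as in Lemma \ref{lem on dim red}, and then compute that pushforward by hand. Writing out the trace, $\sw = b\cdot(AD) - B\cdot(da)$ is linear in the pair $(b,B)$. So $\sw = \langle (b,B), s(a,A,d,D)\rangle$ with
\[
s=(AD,\,-da)\colon \C^4_{a,A,d,D}\to \C^2 .
\]
Both components are $\sG\times\sT$-equivariant: the weight of $AD$ is $-\mathrm{wt}(b)$, and the weight of $da$ is $-\mathrm{wt}(B)$, because $t_xt_yt_z=1$. Applying \cite[Cor.~A.9]{Dav1} and Lemma \ref{lem on dim red} with $E_+=\C^2_{b,B}$ and $B=\C^4_{a,A,d,D}$ identifies \eqref{equ on sp d44} with the pushforward
\[
\iota_*\colon H^{\sG\times\sT}(Z)\to H^{\sG\times\sT}(\C^4)=H^{\sG\times\sT}(R), \qquad Z=\{AD=0\}\times\{da=0\}.
\]

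The key observation is that $Z=Z_1\times Z_2$ is a product. Here $Z_1=\{AD=0\}\subset\C^2_{A,D}$ and $Z_2=\{da=0\}\subset\C^2_{d,a}$ are each a union of two coordinate lines. By \eqref{eq:weights}, the four coordinates $d,D,a,A$ have independent weights $p,q,r,s$. Moreover $H^{\sG\times\sT}(\pt)=\Q[u_0,p,q,r,s]$, so the weights $(p,q,r,s)$ can be treated as independent equivariant parameters of an auxiliary torus $(\C^*)^4$ acting coordinatewise, tensored with $\Q[u_0]$.

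I will first do the one-factor computation. Take $\{xy=0\}=L_x\cup L_y\subset\C^2$, where $L_x=\{y=0\}$, $L_y=\{x=0\}$, and the weights are $x,y$. The closed Mayer--Vietoris sequence in equivariant Borel--Moore homology reads
\[
H(\pt)\to H(L_x)\oplus H(L_y)\to H(\{xy=0\})\to .
\]
All terms are even, so it is short exact on the right. The pushforward to $H(\C^2)=\Q[x,y]$ sends $([L_x],0)\mapsto y$ and $(0,[L_y])\mapsto x$. The kernel of $(f,g)\mapsto fy+gx$ is $\{(hx,-hy)\}$. This is exactly the image of $[\pt]=x[L_x]=y[L_y]$, with the usual sign in Mayer--Vietoris. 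Hence $H(\{xy=0\})\to H(\C^2)$ is injective with image the ideal $(x,y)$.

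For the product, both $H(Z_i)$ are free over the respective polynomial rings: each is the ideal $(x,y)$, which is torsion-free, and graded modules of this form have a cellular description. The equivariant Künneth formula then holds, and I expect it to be the main technical point to justify. I would prove it via the cell decomposition $Z_i=\{0\}\sqcup(L_x\setminus 0)\sqcup(L_y\setminus0)$, whose products give an affine paving of $Z$ with even cells. Granting this, $\iota_*$ is the tensor product of two injective maps of free modules over a polynomial ring, hence injective. Its image is $(q,s)\cdot(p,r)=(pq,ps,rq,rs)$. This also matches a direct check: the four components $\{A=d=0\}$, $\{A=a=0\}$, $\{D=d=0\}$, $\{D=a=0\}$ of $Z$ push forward to $sp$, $sr$, $qp$, $qr$ respectively. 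Finally, the result is extended by the trivial factor $\Q[u_0]$, which fixes the ideal as stated, up to an overall sign convention in Lemma \ref{lem on dim red}.
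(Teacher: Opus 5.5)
Your proposal is correct and is essentially the paper's proof. Dimensional reduction identifies $\mathsf{sp}$ with the pushforward from $Z=\{da=0\}\times\{AD=0\}$ to $\C^4$; Mayer--Vietoris on each pair of axes gives injectivity with image $(p,r)$, resp.\ $(q,s)$, as in Lemma \ref{lem:axes}; and K\"unneth plus tensoring injections yields the image $(p,r)(q,s)=(pq,ps,rq,rs)$. You should drop your justification of K\"unneth, however: $H(Z_i)\cong(x,y)$ is not free over $\Q[x,y]$ (its generators satisfy $x[L_x]=y[L_y]$), and $L_x\setminus 0\cong\C^*$ is not an affine cell, but neither fact is needed, because the Borel construction of $Z_1\times Z_2$ under the factorwise torus action is a product, so K\"unneth holds over the field $\Q$, where a tensor product of injective linear maps is automatically injective.
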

\begin{proof}
Consider the dimensional reduction data  
\[
B=\C^4_{d,a,D,A},\quad
\sigma=(-ad,AD),\quad
Z=\{ad=0,\ AD=0\}\subset B.
\]
The two components of $\sigma$ have weights $p+r$ and $q+s$.
Therefore $R$ is the total space of the dual of the corresponding
rank-two equivariant bundle on $B$, and
$\sw=\langle(B,b),\sigma\rangle$ is linear in its fibers.

As in Lemma \ref{lem on dim red}, specialization map is identified 
with the proper pushforward for $i:Z\hookrightarrow B$:
\begin{align*}
\xymatrix{
H^{\sG\times \sT}_{}(Z) \ar[d]_{}^{\cong} \ar[r]^-{i_*} & H^{\sG\times \sT}(B) \ar[d]_{}^{\cong}  \\
H^{\sG\times \sT}(R,\sw)\ar[r]^{\mathsf{sp}} &  H^{\sG\times \sT}(R),
}
\end{align*}
where the left vertical map is by dimensional reduction \cite[Appendix A]{Dav1}.
  
There is a product structure 
\[
Z=Z_1\times Z_2, \quad Z_1=\{ad=0\}\subset\C^2_{d,a}, \quad Z_2=\{AD=0\}\subset\C^2_{D,A},
\]
where $\C^2_{d,a}$ has weights $p,r$, and $\C^2_{D,A}$ has weights $q,s$.
By equivariant K\"unneth formula and Lemma \ref{lem:axes}:
$$H^{\sG\times \sT}(B)\cong \Q[p,r]\otimes_{\Q}\Q[q,s]\otimes_{\Q}\Q[u_0], $$
$$H^{\sG\times \sT}(Z)\cong (p,r)\otimes_{\Q} (q,s)\otimes_{\Q}\Q[u_0],$$
and $i_*$ is identified with 
\[
\begin{aligned}
(p,r)\otimes_{\Q} (q,s)\otimes_{\Q}\Q[u_0]
&\rightarrow\Q[p,r]\otimes_{\Q}\Q[q,s]\otimes_{\Q}\Q[u_0], \\
 f\otimes g \otimes h&\mapsto fgh.
\end{aligned}
\]
This is a tensor product of injective maps over the field $\Q$, hence
is injective, with image $(p,r)(q,s)$. 
\end{proof}

\begin{Lemma}\label{lem:axes}
Let $\sT_1=(\C^*)^2$ act on $\C^2_{d,a}$ with weights
$p,r$, and $Z_1=\{ad=0\}$. Then 
\begin{equation}\label{equ on hz1}
H^{\sT_1}(Z_1)
\cong
\frac{\Q[p,r]e_d\oplus\Q[p,r]e_a}
     {\Q[p,r](p e_d-r e_a)},
\end{equation}
where $e_d,e_a$ are fundamental classes of the two axes.
Pushforward to $\C^2_{d,a}$ is injective, with image the ideal $(p,r)$.
\end{Lemma}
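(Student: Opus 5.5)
We compute $H^{\sT_1}(Z_1)$, which is equivariant Borel–Moore homology with $\mathbb{Q}$-coefficients, by excision along the origin. Write $Z_1 = L_d\cup L_a$, where $L_d=\{a=0\}$ is the $d$-axis and $L_a=\{d=0\}$ is the $a$-axis, meeting at $0$. The cohomology of the punctured axes $L_d\setminus 0$ and $L_a\setminus 0$ is computed as in the proof of Lemma \ref{lem on nilp}. For instance, $L_d\setminus 0\cong \C^*$ carries a nonvanishing function of weight $p$ (the coordinate $d$). Since $\sT_1$ acts transitively with stabilizer $\ker(p)$, we get
\[
H^{\sT_1}(L_d\setminus 0)\cong \Q[p,r]/(p),
\]
concentrated in even degrees, and similarly $H^{\sT_1}(L_a\setminus 0)\cong\Q[p,r]/(r)$. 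The point contributes $H^{\sT_1}(\pt)=\Q[p,r]$, also in even degrees. The excision sequence
\[
H^{\sT_1}(0)\to H^{\sT_1}(Z_1)\to H^{\sT_1}(Z_1\setminus 0)\to
\]
therefore breaks into short exact sequences in even degrees, exactly as in \eqref{equ on ses of even coho conifolds}.

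The first step is to exhibit the generators and the relation. The fundamental classes $e_d=[L_d]$ and $e_a=[L_a]$ restrict to the generators of the two summands of $H^{\sT_1}(Z_1\setminus 0)$. The image of the class of the origin $[0]$ in $H^{\sT_1}(Z_1)$ can be computed inside each axis. In $L_d\cong\C$ with weight $p$, the origin is the zero locus of the equivariant coordinate section, so $[0]=p\, e_d$. Likewise, in $L_a$ we get $[0]=r\, e_a$. This gives the relation $p e_d=re_a$, and hence a surjection from the right-hand side of \eqref{equ on hz1} onto $H^{\sT_1}(Z_1)$; surjectivity follows from the short exact sequence.

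The second step is to prove that this surjection is injective. One option is a rank count in each degree via the short exact sequence: the quotient module $M$ fits into
\[
0\to\Q[p,r]\xrightarrow{1\mapsto pe_d}M\to \Q[p,r]/(p)\oplus\Q[p,r]/(r)\to0,
\]
and these Hilbert series match those of $H^{\sT_1}(Z_1)$. The cleaner option, which also handles the pushforward claim, is to compose with the pushforward to $\C^2_{d,a}$. This pushforward sends $e_d\mapsto r$ (the axis $L_d$ is cut out by $a$, of weight $r$) and $e_a\mapsto p$. So the map
\[
\Q[p,r]e_d\oplus\Q[p,r]e_a\to\Q[p,r]
\]
is $(f,g)\mapsto fr+gp$. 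Its kernel is $\{(f,g): fr=-gp\}$, which is generated by $(p,-r)$ since $p$ and $r$ are coprime in $\Q[p,r]$. The relation $pe_d-re_a$ therefore generates exactly the kernel.

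Hence the composite of the presentation map with the pushforward is injective from the quotient module. This simultaneously shows that the presentation \eqref{equ on hz1} is an isomorphism (it was already surjective) and that the pushforward is injective. The image is the ideal generated by $r$ and $p$, i.e.\ $(p,r)$. The only delicate point is the sign and weight bookkeeping in $[0]=pe_d=re_a$. Since the pushforward computation independently detects the kernel, any sign convention leads to the same conclusion.
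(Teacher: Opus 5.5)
Your proof is correct and is essentially the paper's argument: you use the same relation $[0]=pe_d=re_a$, obtained by pushing the class of the origin forward from each axis, and the same coprimality computation showing that the kernel of $(f,g)\mapsto fr+gp$ is $\Q[p,r](p,-r)$. The only difference is bookkeeping. The paper reads the presentation, including odd vanishing, directly off the Mayer--Vietoris sequence for the closed cover $Z_1=L_d\cup L_a$. You instead use the open--closed excision sequence for $\{0\}\subset Z_1$ only to get surjectivity of the presentation, and let the injective composite to $H^{\sT_1}(\C^2_{d,a})$ give injectivity of both the presentation and the pushforward at once.
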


\begin{proof}
Write $L_d=\{a=0\}$, $L_a=\{d=0\}$, with $L_d\cap L_a=\{0\}$ and $Z_1=L_d\cup L_a$. The Mayer--Vietoris sequence is 
\begin{equation}\label{MV sequence}
\cdots\rightarrow H_k^{\sT_1}(\{0\})\rightarrow
H_k^{\sT_1}(L_d)\oplus H_k^{\sT_1}(L_a)\rightarrow H_k^{\sT_1}(Z_1)\to \cdots.
\end{equation}
Let $i_d: \{0\}\to L_d$, $i_a: \{0\}\to L_a$ be the inclusion. We have 
$$i_{d*}[0]=p e_d, \quad i_{a*}[0]=r e_a. $$
Their pushforward to $Z_1$ coincide, and the first map in \eqref{MV sequence} is given by $[0]\mapsto p e_d-r e_a$, which is injective.
Since $H_{\mathrm{odd}}^{\sT_1}(\{0\}), H_{\mathrm{odd}}^{\sT_1}(L_d), H_{\mathrm{odd}}^{\sT_1}(L_a)$ all vanish, 
hence $H_{\mathrm{odd}}^{\sT_1}(Z_1)=0$ by the exactness of \eqref{MV sequence}, and we then get \eqref{equ on hz1}. 

The pushforward map along $Z_1\to \C^2_{d,a}$ is $f e_d+g e_a \mapsto fr+gp$.
As $p,r$ are coprime in
$\Q[p,r]$, $fr+gp=0$ implies $f=ph$ and $g=-rh$ for some $h$, which is exactly the relation in the displayed presentation, proving
injectivity.
\end{proof}

Consider a rank-two bundle on $[R/(\sG\times \sT)]$ by the descent of 
\[
E_y:=
\bigl(\Hom(V_{0'},V_1)\otimes\C_y\bigr)
\oplus
\bigl(\Hom(V_1,V_{0'})\otimes\C_y\bigr),
\]
whose $(\sG\times \sT)$-equivariant Chern roots are $p,q$, and hence 
\begin{equation}\label{eq:cy}
c_1^{\sG\times \sT}(E_y)=p+q, \quad c_2^{\sG\times \sT}(E_y)=pq.
\end{equation}
Similarly, the bundle    
\[
E_z:=
\bigl(\Hom(V_0,V_{0'})\otimes\C_z\bigr)
\oplus
\bigl(\Hom(V_{0'},V_0)\otimes\C_z\bigr),
\]
has $(\sG\times \sT)$-equivariant Chern roots $r,s$, giving
\begin{equation}\label{eq:cz}
c_1^{\sG\times \sT}(E_z)=r+s, \quad c_2^{\sG\times \sT}(E_z)=rs.
\end{equation}
By Proposition \ref{prop on Isw}, we have
\begin{Corollary}
$$c_2^{\sG\times \sT}(E_y), \,\, c_2^{\sG\times \sT}(E_z)\in I_\sw, \quad c_1^{\sG\times \sT}(E_y), \, \, c_1^{\sG\times \sT}(E_z)\notin I_\sw. $$
Therefore, $c_2^{\sG\times \sT}(E_y), c_2^{\sG\times \sT}(E_z)$ have unique lifts to $H^{\sG\times \sT}(R,\sw)$, and 
$c_1^{\sG\times \sT}(E_y), c_1^{\sG\times \sT}(E_z)$ do not admit lifts. 
\end{Corollary}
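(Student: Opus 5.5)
The Corollary should follow formally from Proposition \ref{prop on Isw} once $c_1,c_2$ are written in the coordinates \eqref{eq:polynomial}. Since $\mathsf{sp}$ is injective, any class in $I_\sw=\mathrm{Im}(\mathsf{sp})$ has a unique preimage in $H^{\sG\times\sT}(R,\sw)$, and a class outside $I_\sw$ has none. So the whole statement reduces to a membership test for the ideal $I_\sw=(pq,ps,rq,rs)=(p,r)\cdot(q,s)$ in the polynomial ring $\Q[u_0,p,q,r,s]$.

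The first step is the $c_2$ classes. By \eqref{eq:cy} and \eqref{eq:cz}, $c_2(E_y)=pq$ and $c_2(E_z)=rs$. Both are among the listed generators of $I_\sw$, so both lie in $I_\sw$. Injectivity from Proposition \ref{prop on Isw} then gives their unique lifts.

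The second step is the $c_1$ classes. Here $c_1(E_y)=p+q$ and $c_1(E_z)=r+s$. The ideal $I_\sw$ is generated by homogeneous quadrics in the variables $p,q,r,s$, so with respect to the standard grading every nonzero element of $I_\sw$ has all homogeneous components of degree at least $2$. A nonzero linear form such as $p+q$ or $r+s$ therefore cannot lie in $I_\sw$. If one prefers an argument that avoids the grading, evaluate at $p=r=0$: every element of $(p,r)(q,s)$ vanishes there, while $p+q\mapsto q$ and $r+s\mapsto s$ do not. Either way, $c_1(E_y),c_1(E_z)\notin\mathrm{Im}(\mathsf{sp})$, so they have no lifts.

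The one point needing care is that $p,q,r,s,u_0$ really are independent polynomial generators, i.e.\ the identification \eqref{eq:polynomial}. This is a linear change of variables from $(u_0,u_{0'},u_1,h_y,h_z)$ given by \eqref{eq:weights}. Its invertibility can be read off directly: $p+q=2h_y$, $r+s=2h_z$, $q-p=2(u_{0'}-u_1)$ and $r-s=2(u_{0'}-u_0)$, and together with $u_0$ these recover all five original variables over $\Q$. Granting this, everything above is elementary, and I expect no real obstacle.
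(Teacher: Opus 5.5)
Your proposal is correct and follows the paper's route. The paper deduces the corollary directly from Proposition \ref{prop on Isw} (injectivity of $\mathsf{sp}$ and $I_\sw=(p,r)(q,s)$) together with the Chern class formulas \eqref{eq:cy} and \eqref{eq:cz}; your degree or evaluation-at-$p=r=0$ argument for $p+q,\,r+s\notin I_\sw$, and your check that the change of variables is invertible, only make explicit what the paper leaves implicit.
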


\end{document}